\tikzset{curve/.style={settings={#1},to path={(\tikztostart)
    .. controls ($(\tikztostart)!\pv{pos}!(\tikztotarget)!\pv{height}!270:(\tikztotarget)$)
    and ($(\tikztostart)!1-\pv{pos}!(\tikztotarget)!\pv{height}!270:(\tikztotarget)$)
    .. (\tikztotarget)\tikztonodes}},
    settings/.code={\tikzset{quiver/.cd,#1}
        \def\pv##1{\pgfkeysvalueof{/tikz/quiver/##1}}},
    quiver/.cd,pos/.initial=0.35,height/.initial=0}
\tikzset{tail reversed/.code={\pgfsetarrowsstart{tikzcd to}}}
\tikzset{2tail/.code={\pgfsetarrowsstart{Implies[reversed]}}}
\tikzset{2tail reversed/.code={\pgfsetarrowsstart{Implies}}}
\tikzset{no body/.style={/tikz/dash pattern=on 0 off 1mm}}
\newtheorem{theorem}{Theorem}[section]
\newtheorem{proposition}[theorem]{Proposition}
\newtheorem{lemma}[theorem]{Lemma}
\theoremstyle{definition}
\newtheorem{remark}[theorem]{Remark}
\newcommand{\uxa}{\ensuremath{(\underline{X},\underline{A})}}
\newcommand{\caa}{\ensuremath{(\underline{CA},\underline{A})}}
\newcommand{\zk}{\ensuremath{\mathcal{Z}_{K}}}
\newcounter{bean}
\newcommand{\namedright}[3]{\ensuremath{#1\stackrel{#2}
 {\longrightarrow}#3}}
\newcommand{\nameddright}[5]{\ensuremath{#1\stackrel{#2}
 {\longrightarrow}#3\stackrel{#4}{\longrightarrow}#5}}
\newcommand{\qqed}{\hfill\Box}
\title{Polyhedral products associated to pseudomanifolds}
\author{Lewis Stanton} 
\address{Mathematical Sciences, University of Southampton, Southampton SO17 1BJ, United Kingdom}
\email{lrs1g18@soton.ac.uk}
\author{Stephen Theriault}
\address{Mathematical Sciences, University of Southampton, Southampton SO17 1BJ, United Kingdom}
\email{s.d.theriault@soton.ac.uk}
\subjclass[2020]{Primary 55P35, 57S12; Secondary 05E45}
\keywords{polyhedral product, homotopy type, pseudomanifold, triangulation}
\begin{document}

\begin{abstract}
We study the homotopy theory of polyhedral products associated to a combinatorial generalisation of manifolds known as pseudomanifolds. As special cases, we show that loop spaces of moment-angle manifolds associated to triangulations of $S^2$ and $S^3$ decompose as a product of spheres and loops on spheres.
\end{abstract}

\maketitle

\section{Introduction}
\label{sec:intro}

Polyhedral products are subspaces of the Cartesian product, the properties of which are governed by an underlying simplicial complex. They unify various constructions across mathematics, such as complements of complex coordinate subspace arrangements in combinatorics, intersections of quadrics in complex geometry and classifying spaces of graph products of groups in geometric group theory. Understanding their homotopy theory has implications in all these areas. In this paper, we study the homotopy theory of polyhedral products associated to a family of simplicial complexes known as pseudomanifolds.

Let $K$ be a simplicial complex on the vertex set $[m] = \{1,2,\cdots,m\}$. For $1 \leq i \leq m$, let $(X_i,A_i)$ be a pair of pointed $CW$-complexes, where $A_i$ is a pointed $CW$-subcomplex of $X_i$. Let $\uxa = \{(X_i,A_i)\}_{i=1}^m$ be the sequence of pairs. For each simplex $\sigma \in K$, let $\uxa^\sigma$ be defined by \[ \uxa^\sigma = \prod\limits_{i=1}^m Y_i \text{ where }  Y_i = \begin{cases} X_i & i \in \sigma \\ A_i & i \notin \sigma. \end{cases}\] The \textit{polyhedral product} determined by $\uxa$ and $K$ is \[\uxa^K = \bigcup\limits_{\sigma \in K} \uxa^{\sigma} \subseteq \prod\limits_{i=1}^m X_i.\] A particularly important special case is when $(X_i,A_i) = (D^2,S^1)$ for all $i$. These polyhedral products are called \textit{moment-angle complexes}, and are denoted $\mathcal{Z}_K$.

One aspect of the homotopy theory of polyhedral products that has been the subject of intense study recently is the homotopy type of their based loop spaces. Let $\mathcal{P}$ be the collection of $H$-spaces homotopy equivalent to a finite type product of spheres and loops on spheres. If $X$ is a simply-connected space, there are advantages to knowing that $\Omega X\in\mathcal{P}$: it means the homology of $\Omega X$ is torsion-free, the Steenrod algebra acts trivially on the mod-$p$ cohomology of $\Omega X$ for any prime $p$, and if the factors in the decomposition of $\Omega X$ are explicit, then the homotopy groups of~$X$ are known to the same extent as the homotopy groups of spheres. 
Various families of polyhedral products have been shown to have their loop space in $\mathcal{P}$, including flag complexes \cite{PT,V}, graphs~\cite{St1}, $2$-dimensional simplicial complexes with torsion free homology \cite{St2} and certain polyhedral join products \cite{E}. In this paper, we focus on the case when $K$ is the triangulation of a sphere.

If $K$ is the triangulation of a sphere, then $\zk$ has the structure of a manifold, and is known as a \emph{moment-angle-manifold}. The diffeomorphism type of $\zk$ is known for an important family of triangulations. If $P$ is a simple polytope obtained from a simplex by iterated vertex cuts and~$K$ is the Alexander dual of the boundary of $P$, then $\zk$ is diffeomorphic to a connected sum of products of two spheres. This statement originated in work of MacGavran~\cite{M} pre-dating polyhedral products, took a spectacular leap forward in work of Bosio and Meersseman~\cite{BM} and Gitler and L\'{o}pez de Medrano~\cite{GLdM} on intersections of quadrics, and culminated in the solution of a conjecture in~\cite{GLdM} by Chen, Fan and Wang~\cite{CFW}. However, very little is known about even the homotopy type of moment-angle manifolds for triangulations of spheres outside this family. 

In this paper we develop new methods to study the homotopy type of $\Omega\zk$ for a combinatorial generalisation of triangulations of spheres known as pseudomanifolds. The collection of pseudomanifolds include triangulations of manifolds. We establish criteria for when a polyhedral product of the form $\caa^{K}$ with $K$ a pseudomanifold has $\Omega\caa^{K}\in\mathcal{P}$. In particular, we prove the following. 

\begin{theorem}
\label{thm:introtriangsurface}
    If $K$ is the triangulation of a connected, orientable, closed surface on $[m]$, then $\Omega \zk \in \mathcal{P}$.
\end{theorem}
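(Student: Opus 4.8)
The plan is to recognise a triangulation of a surface as a $2$-dimensional simplicial complex and to reduce the statement to the result of \cite{St2}, which gives $\Omega\zk\in\mathcal{P}$ whenever $K$ is $2$-dimensional with torsion-free integral homology. Since the homotopy type of $\zk$ is governed by the full subcomplexes $K_I$ for $I\subseteq[m]$ --- the reduced homology of $\zk$ being a direct sum of shifted copies of the groups $\rhlgy{K_I}$ via the standard stable splitting of the moment-angle complex --- the real content is to verify that \emph{every} $K_I$ has torsion-free homology. The first step is therefore homological bookkeeping for $K$, and the second is an embedding argument for its full subcomplexes that exploits orientability in an essential way.

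First I would record that, since $|K|$ is a connected orientable closed surface of some genus $g$, the complex $K$ has $H_0(K)\cong\mathbb{Z}$, $H_1(K)\cong\mathbb{Z}^{2g}$ and $H_2(K)\cong\mathbb{Z}$, all torsion-free; orientability is crucial, as a non-orientable surface such as $\mathbb{RP}^{2}$ carries $\mathbb{Z}/2$-torsion in $H_1$ and the argument genuinely breaks. For a proper full subcomplex $K_I$ with $I\subsetneq[m]$, the realisation $|K_I|$ is a proper compact subset of $|K|$. Its union of closed $2$-simplices forms a compact subsurface-with-boundary $S$ of the orientable surface $|K|$ (possibly pinched at finitely many vertices); because $|K|$ is connected and $S\neq|K|$, no component of $S$ is closed, so each has nonempty boundary and deformation retracts onto a graph. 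Hence $H_*(S)$ is free and concentrated in degrees $0$ and $1$. Attaching the remaining edges and vertices of $K_I$ along the $1$-skeleton only adds free summands, so $\rhlgy{K_I}$ is torsion-free. Together with the case $I=[m]$, where $K_{[m]}=K$, this shows that every full subcomplex of $K$ has torsion-free homology, and the hypotheses of \cite{St2} are met.

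With torsion-freeness of all full subcomplexes in hand, \cite{St2} applies to give $\Omega\zk\in\mathcal{P}$. I expect the main obstacle to be precisely the second step: controlling the homology of every full subcomplex uniformly, rather than that of $K$ alone. A surface triangulation has full subcomplexes of many homotopy types --- wedges of circles, punctured subsurfaces, and disjoint unions thereof --- and the claim that none of them develops torsion rests squarely on the orientability of the ambient surface; making the subsurface-retraction argument fully rigorous in the presence of pinch points is where the care is needed. Once this is established the reduction to \cite{St2} is immediate. An alternative, more self-contained route would bypass \cite{St2} and build $\zk$ directly from the structure of $K$ as a $2$-dimensional pseudomanifold, but the homological reduction above is the most economical path given the tools already available.
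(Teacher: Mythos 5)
Your proposal is correct, and it takes the same overall route as the paper: reduce to the $2$-dimensional criterion of \cite{St2} (Theorem~\ref{thm:2diminP}, applied with $(CA_i,A_i)=(D^2,S^1)$), so that the entire content of the proof is checking that $H_*(|K_I|)$ is torsion free for every full subcomplex $K_I$. The difference lies in how that check is carried out. The paper's proof of Theorem~\ref{thm:triangsurface} does it in two lines: orientability implies $|K|$, and hence every $|K_I|$, embeds in $\mathbb{R}^3$, and then \cite[Corollary~3.46]{H} (an Alexander duality consequence for compact, locally contractible subspaces of $\mathbb{R}^n$) immediately gives torsion-freeness. You instead argue by hand: the union of the $2$-simplices of a proper full subcomplex $K_I$ is a compact, possibly pinched, subsurface of $|K|$ all of whose components have nonempty boundary and hence retract onto graphs, after which the leftover edges and vertices of $K_I$ contribute only free summands. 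This does work --- no component can be closed, since a closed component would be both open and closed in the connected surface $|K|$, and pinch points are handled by passing to the normalization (an honest compact surface with boundary) and re-identifying finitely many points, which up to homotopy only attaches $1$-cells --- but the pinch-point bookkeeping you rightly flag as the delicate step is exactly what the paper's embedding argument sidesteps. Both arguments use orientability in the same essential place (the case $I=[m]$: a triangulated $\mathbb{R}P^2$ has $2$-torsion in $H_1$, while for proper $K_I$ your subsurface argument never needs orientability), and both verify the hypothesis for all full subcomplexes, which is more than Theorem~\ref{thm:2diminP} requires (only those with complete $1$-skeleton). The trade-off: your route is more elementary and self-contained, while the paper's is shorter and more robust, needing no analysis of how $K_I$ sits inside $K$.
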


This includes the case when $K$ is a triangulation of $S^2$. We also obtain an analogous result when~$K$ is a triangulation of $S^3$.

\begin{theorem}
\label{thm:introtriofS3inP}
     Let $K$ be a triangulation of $S^3$ on $[m]$. Then $\Omega \zk \in \mathcal{P}$.
\end{theorem}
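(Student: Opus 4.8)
The plan is to build $\mathcal{Z}_K$ by filling in the tetrahedra of $K$ one at a time, starting from its $2$-skeleton, and to track the effect of each filling on the based loop space. Let $L = K^{(2)}$ be the $2$-skeleton of $K$. Since $|K| \cong S^3$ is simply connected, so is $|L|$, whence $H_1(L) = 0$; as $L$ is $2$-dimensional, $H_2(L)$ is free, so $L$ has torsion-free homology and $\Omega \mathcal{Z}_L \in \mathcal{P}$ by the $2$-dimensional case \cite{St2}. This is the base case. Now order the $3$-simplices $\sigma_1, \dots, \sigma_t$ of $K$ and form the filtration $L = K_0 \subseteq K_1 \subseteq \cdots \subseteq K_t = K$ with $K_j = K_{j-1} \cup \sigma_j$. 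At each stage $\sigma_j$ is a maximal face of $K_j$ whose boundary $\partial \sigma_j$ already lies in $L \subseteq K_{j-1}$, so each step is the elementary operation of filling a single top simplex.

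I would next record the homotopy cofibration produced by such a filling. If $K = K' \cup \sigma$ with $\sigma$ a new maximal face, $|\sigma| = 4$, and $\partial \sigma \subseteq K'$, then a direct inspection of the defining union shows $\mathcal{Z}_{K'} \cap (D^2,S^1)^{\sigma} = S^7 \times T^{m-4}$, where $T^{m-4}$ is the torus on the vertices outside $\sigma$. Hence $\mathcal{Z}_K = \mathcal{Z}_{K'} \cup_{S^7 \times T^{m-4}} (D^8 \times T^{m-4})$, giving a homotopy cofibration
\[
\mathcal{Z}_{K'} \longrightarrow \mathcal{Z}_K \longrightarrow S^8 \wedge (T^{m-4})_{+},
\]
whose cofibre $S^8 \wedge (T^{m-4})_{+}$ is a wedge of spheres in dimensions $8$ through $m+4$. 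Equivalently, $\mathcal{Z}_K \simeq \mathcal{Z}_{K'} \cup_{\phi} CA$ with $A = S^7 \wedge (T^{m-4})_{+}$ a wedge of spheres and $\phi \colon A \to \mathcal{Z}_{K'}$ the attaching map. Here I would apply the theory of inert attaching maps: when $\phi$ is inert, the associated loop-space decomposition realises $\Omega \mathcal{Z}_K$ as a finite product of spheres and loops on spheres built from $\Omega \mathcal{Z}_{K'}$ and $\Omega A$, so that $\Omega \mathcal{Z}_{K'} \in \mathcal{P}$ forces $\Omega \mathcal{Z}_K \in \mathcal{P}$. Running this along the filtration and inducting on $j$ reduces the theorem to proving that each attaching map $\phi_j$ is inert.

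The main obstacle is exactly this inertness, and it is where the manifold structure of $K$ must enter: the cofibration above exists for any complex assembled by filling top simplices, yet $\Omega \mathcal{Z}_K \in \mathcal{P}$ genuinely fails for general $3$-complexes, so inertness cannot hold on formal grounds alone. To establish it I would analyse $\phi_j$ through the torsion-free loop homology $H_*(\Omega \mathcal{Z}_{K_{j-1}})$ and aim to show the induced map $H_*(\Omega \mathcal{Z}_{K_{j-1}}) \to H_*(\Omega \mathcal{Z}_{K_j})$ is well behaved, a condition controlled for pseudomanifolds by the vertex links. For a triangulation of $S^3$ every link is a triangulation of $S^2$, so Theorem~\ref{thm:introtriangsurface} provides the needed control one dimension down and drives an induction on dimension. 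I expect the decisive and hardest case to be the terminal filling $K_{t-1} \subseteq K_t = K$, at which $\mathcal{Z}_K$ acquires its fundamental class and becomes a closed, simply-connected, orientable Poincar\'e duality manifold of dimension $m+4$; the presence of the fundamental class and the duality cup-product pairing are what make this step essentially harder than the earlier ones, and one must exploit the self-duality of $\mathcal{Z}_K$ to prove that this final attaching map is inert. Assembling the inert decompositions along the filtration then yields $\Omega \mathcal{Z}_K \in \mathcal{P}$.
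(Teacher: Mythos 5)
Your reduction is set up correctly: the intersection $\mathcal{Z}_{K_{j-1}} \cap (D^2,S^1)^{\sigma_j} = S^7 \times T^{m-4}$, the resulting cofibration, and the principle that an inert attaching map propagates $\Omega \mathcal{Z}_{K_{j-1}} \in \mathcal{P}$ to $\Omega \mathcal{Z}_{K_j} \in \mathcal{P}$ are all sound. Indeed, this local move is exactly the paper's Theorem~\ref{faceinert}, which gives even an unlooped right homotopy inverse together with a wedge splitting. But your proposal never proves any of its inertness claims, and those claims are the entire content of the theorem. For the fillings $j<t$, inertness does hold, but only via a combinatorial criterion you do not identify: Theorem~\ref{faceinert} requires $\sigma_j$ to have a $2$-face that is still maximal in $K_{j-1}$, which (since exactly two tetrahedra of $K$ contain any $2$-face) means $\sigma_j$ must have a dual-graph neighbour that has not yet been filled. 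So the tetrahedra must be ordered accordingly, for instance using a spanning tree of $D(K)$ rooted at $\sigma_t$ with each tetrahedron listed before its parent. Torsion-freeness of loop homology and the fact that vertex links are triangulations of $S^2$ do not feed into any criterion for inertness, and, as you note yourself, no formal argument can work because the statement fails for general $3$-complexes.

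The decisive gap is the terminal filling, exactly where you predicted trouble but supplied no argument. Because $K$ is a closed pseudomanifold, every $2$-face of $\sigma_t$ lies in two tetrahedra of $K$, so no face of $\partial \sigma_t$ is maximal in $K_{t-1} = K \setminus \sigma_t$: the free-face criterion fails of necessity, and ``exploit the self-duality of $\mathcal{Z}_K$'' is a restatement of the problem, not a proof. Showing that $\Omega \mathcal{Z}_{K \setminus \sigma_t} \rightarrow \Omega \mathcal{Z}_K$ has a right homotopy inverse is essentially as hard as the theorem itself. The paper is structured precisely to avoid this step, splitting into two cases. If the $1$-skeleton of $K$ is not complete, it never fills the last face of $K$ at all: it deletes a vertex $v$, applies Theorem~\ref{thm:maniwithboundretskel} to the pseudomanifold with boundary $K \setminus v$ (whose dual graph has vertices of degree less than $4$ in each component, by Lemma~\ref{lem:restsathypo}), and reassembles $K$ via a pushout over the star of $v$ using Theorems~\ref{thm:restinPimplyP} and~\ref{thm:pushoutofPisinP}. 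If the $1$-skeleton is complete, then $K$ is a neighbourly triangulation of $S^3$ and a wholly different argument is used: the BBCG decomposition shows the punctured manifold $\overline{\zk}$ is a wedge of spheres (Theorem~\ref{thm:neighbourlytriofsphere}), and inertness of the single top cell is imported from \cite[Example 5.4]{T}, an external manifold-theoretic input that cannot be replaced by a direct appeal to Poincar\'e duality. Finally, a smaller, fixable issue: your base case verifies only that the homology of the $2$-skeleton itself is torsion free, whereas Theorem~\ref{thm:2diminP} requires torsion-free homology of every full subcomplex with complete $1$-skeleton; this does hold, but because every full subcomplex of $K$ embeds in $S^3$ and one invokes \cite[Corollary 3.46]{H}, which is how Lemma~\ref{lem:S3trihomology} argues.
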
 

The argument proving Theorem~\ref{thm:introtriofS3inP} breaks into two cases, one of which can be generalised to certain triangulations of any odd dimensional sphere. A simplicial complex is \emph{$k$-neighbourly} if every set of $k+1$ vertices spans a simplex. A triangulation $K$ of $S^{2n+1}$ is \emph{neighbourly} if $K$ is $n$-neighbourly. 

\begin{theorem}
\label{thm:introneighbourlytriinP}
     If $K$ is a neighbourly triangulation of $S^{2n+1}$ on $[m]$ with $n \geq 1$, then $\Omega \zk \in \mathcal{P}$.
\end{theorem}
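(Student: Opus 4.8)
The plan is to use the fact that, since $K$ triangulates $S^{2n+1}$, the moment-angle complex $\zk$ is a closed, simply-connected, orientable manifold, and therefore a Poincar\'e duality complex of formal dimension $N = m+2n+2$. The first step is to translate the neighbourly hypothesis into connectivity data through the Hochster-type formula $\widetilde{H}^{k}(\zk)\cong\bigoplus_{I\subseteq[m]}\widetilde{H}^{k-|I|-1}(K_{I})$. Because $K$ is $n$-neighbourly, every full subcomplex $K_{I}$ with $|I|\le n+1$ is a simplex and hence contractible, while every $K_{I}$ with $|I|\ge n+2$ contains the full $n$-skeleton of $\Delta^{|I|-1}$ and so is $(n-1)$-connected. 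Substituting this shows that $\widetilde{H}^{k}(\zk)=0$ for $k\le 2n+2$, so $\zk$ is $(2n+2)$-connected, with lowest nonzero cohomology in degree $2n+3$ contributed by the $(n+2)$-subsets $I\notin K$, for which $K_{I}\simeq S^{n}$.

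The strategy is then to feed $K$ into the pseudomanifold criteria established earlier in the paper, which reduce $\Omega\zk\in\mathcal{P}$ to understanding the top cell of the manifold. Concretely, I would pass to the punctured manifold $\overline{\zk}=\zk\setminus\{\ast\}$, which fits into a homotopy cofibration $S^{N-1}\xrightarrow{f}\overline{\zk}\to\zk$ exhibiting $\zk$ as $\overline{\zk}$ with its single top cell attached along $f$; equivalently one may work with the triangulated ball $K\setminus\sigma$ obtained by deleting a facet $\sigma$, whose moment-angle complex is the natural candidate for the "core" below the top cell. The heart of this step is to show that this core is homotopy equivalent to a finite wedge of spheres, and this is where neighbourliness does the work: the uniformly high connectivity of the subcomplexes $K_{I}$ is used to show that the cohomology of $\zk$ below degree $N$ is torsion-free, and that deleting the fundamental class kills the only cup products forced by Poincar\'e duality, leaving reduced cohomology with trivial products. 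Granting that $\overline{\zk}$ is then a wedge of spheres, $\Omega\overline{\zk}$ is a finite product of loops on spheres by the Hilton--Milnor theorem, so $\Omega\overline{\zk}\in\mathcal{P}$.

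The final and hardest step is to control $f$: one wants it to be \emph{inert}, meaning that $\Omega\overline{\zk}\to\Omega\zk$ admits a right homotopy inverse, since inertness produces a homotopy decomposition of $\Omega\zk$ assembled from $\Omega\overline{\zk}$ and the loops on the homotopy fibre of $\overline{\zk}\to\zk$, each factor of which stays a product of spheres and loops on spheres, giving $\Omega\zk\in\mathcal{P}$. Unlike the case of highly-connected manifolds of small dimension, here the top cell can sit far above the metastable range once $m$ is large, so inertness cannot be read off from a dimension count; it must instead be extracted from the combinatorics of the neighbourly triangulation, which is precisely what the pseudomanifold criteria are designed to exploit. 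I expect this, namely producing the section of $\Omega\overline{\zk}\to\Omega\zk$ from neighbourliness rather than from raw connectivity, to be the main obstacle, and it is exactly the step that generalises the neighbourly case of the $S^{3}$ argument in Theorem~\ref{thm:introtriofS3inP}. Verifying that the neighbourly hypothesis always supplies this inertness, and that it meshes with the general pseudomanifold criterion, is the crux of the proof.
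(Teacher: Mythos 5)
Your overall architecture does match the paper's: the cofibration $S^{m+2n+1}\to\overline{\zk}\to\zk$, proving the ``core'' $\overline{\zk}$ is a wedge of spheres, and then splitting $\Omega\overline{\zk}\to\Omega\zk$ to conclude via Hilton--Milnor and Theorem~\ref{thm:Pclosedunderret}. But both pivotal steps are left with genuine gaps, and you have the distribution of difficulty exactly backwards. The step you ``grant'' --- that $\overline{\zk}\in\mathcal{W}$ --- is where all of the paper's work lies, and the inference you sketch for it is invalid: torsion-free homology plus vanishing cup products (indeed even vanishing Massey products, i.e.\ Golodness) does not imply a space is a wedge of spheres. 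For instance $\Sigma\mathbb{C}P^{2}$ is simply connected with torsion-free homology and all products trivial, yet $Sq^{2}$ shows it is not in $\mathcal{W}$; in the polyhedral product setting this issue is precisely the question of whether the BBCG splitting of Theorem~\ref{thm:BBCG} \emph{desuspends}. A connectivity count cannot rescue this either: $\overline{\zk}$ is $(2n+2)$-connected but carries homology up to degree $m+n$, far outside any range where homology determines homotopy type once $m$ is large. Moreover, even your homological input is incomplete: neighbourliness gives $(n-1)$-connectedness of each $K_I$, but the vanishing of $\widetilde{H}_{k}(|K_I|)$ for $k>n$ and the freeness in degree $n$ do \emph{not} follow from connectivity; the paper obtains them by applying Poincar\'e duality to the \emph{real} moment-angle manifold $\mathbb{R}\zk$ of dimension $2n+2$ (the Gitler--L\'opez de Medrano argument inside Theorem~\ref{thm:neighbourlytriofsphere}). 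The paper then needs two further, genuinely combinatorial, steps that your proposal has no substitute for: the pseudomanifold machinery (Lemma~\ref{lem:restsathypo} and Theorem~\ref{thm:maniwithboundretskel}) retracts $\mathcal{Z}_{K\setminus i}$ off $\mathcal{Z}_{(K\setminus i)^{2n}}$, so that \cite[Theorem 1.6]{IK2} makes each $\mathcal{Z}_{K\setminus i}$ a co-$H$ space and desuspends its BBCG splitting (Theorem~\ref{thm:neighbourlupseudominnonGolod}); and naturality of the BBCG splitting assembles these into an explicit map $\bigvee_{I\notin K,\,I\neq[m]}\Sigma^{1+|I|}|K_I|\to\overline{\zk}$ which is a homology isomorphism, hence an equivalence (Proposition~\ref{prop:decompskelzk}). ``No cup products'' produces no such map at all.

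By contrast, the step you flag as ``the main obstacle'' --- inertness of the top cell --- requires no combinatorics and is a one-line citation in the paper: since $\zk$ is a $2$-connected closed manifold, \cite[Example 5.4]{T} gives that $\overline{\zk}\to\zk$ has a right homotopy inverse after looping, for \emph{any} triangulated sphere $K$, neighbourly or not; neighbourliness is consumed entirely in proving $\overline{\zk}\in\mathcal{W}$. (A smaller correction: $\mathcal{Z}_{K\setminus\sigma}$ for a facet $\sigma$ is not ``equivalently'' the punctured manifold; it is a different space --- for example it carries a homology class coming from $(K\setminus\sigma)_{\sigma}=\partial\sigma$ that $\overline{\zk}$ does not have --- and the paper works with the CW skeleton $\overline{\zk}$ throughout.) So as written, your proposal establishes neither of the two key claims: one rests on a false implication, and the other is deferred to exactly the external result the paper cites.
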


The methods used to prove Theorems~\ref{thm:introtriangsurface}, \ref{thm:introtriofS3inP} and~\ref{thm:introneighbourlytriinP} involve a new approach to studying how the homotopy type of a polyhedral product is affected by the removal of a maximal face. This is likely to be of wider use. It is inspired by how certain simply-connected manifolds (not necessarily moment-angle manifolds) have their loop spaces retracting off the loops of the associated punctured manifold~\cite{T}, and makes use of key properties proved in~\cite{St1,St2} that generate retractions of looped polyhedral products with respect to $\mathcal{P}$. 

The authors would like to thank the referees for numerous helpful comments which helped to improve the paper. In particular, it was pointed out that some of our results hold for more general neighbourly triangulations of spheres.

\section{Preliminary material}
\label{sec:prelimcomb}  
This section collects some preliminary combinatorial and homotopy theoretic information. 
\medskip 

\noindent 
\textbf{Pseudomanifolds}.  
A simplicial complex $K$ of dimension $n$ is called \textit{pure} if every simplex is contained in at least one $n$-simplex. To any pure simplicial complex $K$ of dimension $n$, there is an associated graph $D(K)$ called the \textit{dual graph} of $K$. The vertices of $D(K)$ are given by the $n$-simplices of $K$, and two vertices in $D(K)$ are adjacent if and only if their respective faces in $K$ intersect over a face of codimension one. 

A simplicial complex $K$ of dimension $n$ is called a \textit{weak pseudomanifold with boundary} if every face of codimension one is contained in either one or two maximal faces. The \textit{boundary} of a weak pseudomanifold $K$ is the simplicial complex whose maximal faces are given by the codimension one faces of $K$ which are contained in exactly one maximal face. If the boundary is empty then $K$ is a \textit{weak pseudomanifold}. 

A simplicial complex $K$ is a \textit{pseudomanifold of dimension~$n$} if: (i) it is a pure simplicial complex of dimension $n$, (ii) it is a weak pseudomanifold, and (iii) $D(K)$ is a connected graph.  
The definition of a \textit{pseudomanifold with boundary} is analogous. Triangulations of manifolds are examples of pseudomanifolds. 
\medskip 

\noindent  
\textbf{Two combinatorial statements}. 
We first describe a general graph theoretic result. For a graph~$G$, let $V(G)$ be the vertex set of $G$. For a vertex $v$ in a graph $G$, the \textit{degree} of $v$, denoted $\mathrm{deg}_G(v)$, is the number of edges incident to $v$.

\begin{lemma}
\label{lem:vertexremoval}
    Let $G$ be a connected simple graph on $m$ vertices, and let $n \in \mathbb{N}$. Suppose $\mathrm{deg}_G(v) \leq n$ for all $v \in V(G)$ and there exists a vertex $w \in V(G)$ such that $\mathrm{deg}_G(w) < n$. Then there exists an ordering of the vertices $v_1,\cdots,v_m$ such that $\mathrm{deg}_G(v_1) < n$ and $\mathrm{deg}_{G \setminus \{v_1,\cdots,v_{i-1}\}}(v_i) < n$ for $2 \leq i \leq m$.
\end{lemma}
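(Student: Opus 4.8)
The plan is to build the ordering greedily from $w$, using the connectivity of $G$ to guarantee that at every later stage the remaining graph contains a vertex adjacent to an already-deleted one, which forces its degree below $n$. First I would set $v_1 = w$. Since $\mathrm{deg}_G(w) < n$ by hypothesis, the required condition $\mathrm{deg}_G(v_1) < n$ holds at once; this is the only point in the argument where the existence of a vertex of degree strictly less than $n$ is needed, and it is exactly what gets the process started.

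Next, suppose $v_1,\ldots,v_{i-1}$ have been chosen for some $2 \leq i \leq m$, and write $R = \{v_1,\ldots,v_{i-1}\}$ and $G_i = G \setminus R$. Since $i \leq m$, the vertex set of $G_i$ is nonempty, and $R$ is nonempty. Because $G$ is connected, any partition of $V(G)$ into two nonempty sets admits an edge crossing between them; applying this to the partition $V(G) = R \sqcup V(G_i)$ produces a vertex $u \in V(G_i)$ that is adjacent in $G$ to some $v_j \in R$. I would simply take $v_i = u$, and continue until all $m$ vertices are exhausted.

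It remains to check the degree inequality for $v_i$. The edge $v_i v_j$ is present in $G$ but absent from $G_i$, since $v_j$ has been removed, so deleting $R$ destroys at least this one edge incident to $v_i$. Hence $\mathrm{deg}_{G \setminus \{v_1,\ldots,v_{i-1}\}}(v_i) \leq \mathrm{deg}_G(v_i) - 1 \leq n - 1 < n$, using the hypothesis $\mathrm{deg}_G(v_i) \leq n$, which is precisely the claimed bound. The only step that could fail is the existence of the crossing vertex $u$ at each stage, and this is exactly where connectivity is indispensable: the degree bound $\mathrm{deg}_G(v) \leq n$ alone would not suffice, since a separate $n$-regular component could never be reached or started. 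I therefore expect the main (and only) conceptual point to be the observation that a deleted neighbour drops the relevant degree by at least one, with no computational difficulty beyond that.
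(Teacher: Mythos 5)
Your proof is correct, and it takes a genuinely different route from the paper's. The paper argues by strong induction on the number of vertices: it removes a vertex $v_1$ of degree $<n$, and then must split into cases according to whether $G\setminus v_1$ is connected, handling the disconnected case by verifying that each component inherits the hypotheses (because $v_1$ is adjacent to every component) and concatenating the orderings obtained from the inductive hypothesis. Your greedy construction sidesteps all of this: you observe that the required condition at step $i$ is purely local --- $v_i$ need only have at least one $G$-neighbour among the already-deleted vertices, which forces $\mathrm{deg}_{G\setminus\{v_1,\ldots,v_{i-1}\}}(v_i)\leq \mathrm{deg}_G(v_i)-1<n$ --- and that connectivity of $G$ supplies such a crossing edge for the partition $\{v_1,\ldots,v_{i-1}\}\sqcup V(G_i)$ at every stage, with no need to track whether the remaining graph stays connected. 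This buys a shorter argument with no case analysis and no base case (the paper separately handles $|V(G)|\leq n$), and it makes transparent exactly where each hypothesis enters: $\mathrm{deg}_G(w)<n$ starts the process, $\mathrm{deg}_G(v)\leq n$ gives the bound at every later step, and connectivity guarantees the crossing vertex. The paper's inductive formulation, by contrast, packages the statement in a form that is directly reusable component-by-component, which is how it is invoked later (Theorem \ref{thm:maniwithboundretskel} applies it to each connected component of the dual graph), but nothing is lost by your version since one can simply apply it to each component separately.
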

\begin{proof}
    The proof is by induction on the number of vertices. Suppose $|V(G)| \leq n$. The maximum degree of a vertex in such a graph is $n-1$, and so any ordering of the vertices suffices in this case.

    Suppose that $|V(G)| = k>n$ and the result is true for all connected graphs $H$ with $|V(H)| < k$. Let $v_1$ be a vertex of $G$ such that $\mathrm{deg}_G(v_1) < n$. Since $G$ is connected, $\mathrm{deg}_G(v_1) \geq 1$. Consider $G \setminus v_1$. By hypothesis, $\mathrm{deg}_{G}(v)\leq n$ for each vertex $v \in G \setminus v_1$, so $\mathrm{deg}_{G\setminus v_{1}}(v)\leq n$. Moreover, since $\mathrm{deg}_G(v_1) \geq 1$, there exists a vertex $v \in G \setminus v_1$ which is adjacent to $v_1$ in $G$. It follows that $\mathrm{deg}_{G \setminus v_1}(v) < n$. 
    There are two cases to consider. If $G \setminus v_1$ is connected, then the inductive hypothesis implies there is an ordering of the vertices $v_2,\cdots,v_m$ of $G \setminus v_1$ such that the statement holds. Therefore, the ordering $v_1,\cdots,v_m$ implies the result is true for $G$.

    If $G \setminus v_1$ is disconnected, denote by $C_1,\cdots,C_l$ the connected components of $G \setminus v_1$, and let $C_i$ have~$d_i$ vertices. Let $x \in C_i$ and $y \in C_j$, where $i \neq j$. Since $G$ is connected and $G \setminus v_1$ is disconnected, any path between $x$ and $y$ must pass through $v_1$. It follows that in $G$, for each $1 \leq i \leq l$, the vertex $v_1$ must be adjacent to some vertex $c_i \in C_i$, implying that $\mathrm{deg}_{G \setminus v_1}(c_i) < n$. Therefore, each $C_i$ is a connected graph with strictly less vertices than $G$ which satisfies the hypotheses in the statement of the lemma. The inductive hypothesis implies that there is an ordering of the vertices $c_{i_1},\cdots,c_{i_{d_i}}$ of $C_i$ such that the result holds for $C_i$. The ordering of the vertices $v_1,c_{1_1},\cdots,c_{1_{d_1}},\cdots,c_{l_1},\cdots,c_{l_{d_k}}$ therefore implies the result is true for $G$.
\end{proof}

Next, we describe a pushout decomposition for certain  
simplicial complexes $K$. Let $\sigma\in K$ be a maximal face 
and let $\partial\sigma$ be the boundary of $\sigma$. Let $K\backslash\sigma$ be  the simplicial complex $K$ with 
the interior of the face $\sigma$ removed.

\begin{lemma}
\label{lem:Lexists}
    Let $K$ be a simplicial complex and let $\sigma$ be a maximal face of $K$. There exists a subcomplex $L$ of $K$ such that there is a pushout 
    \[\begin{tikzcd}
	{\partial \sigma \cap L} & {\partial \sigma} \\
	L & {K \setminus \sigma}
	\arrow[from=1-1, to=1-2]
	\arrow[from=1-1, to=2-1]
	\arrow[from=1-2, to=2-2]
	\arrow[from=2-1, to=2-2]
\end{tikzcd}\] with $\partial \sigma \cap L \neq \partial \sigma$ if and only if there exists a face $\tau \in \partial \sigma$ with $|\tau| = |\sigma|-1$ which is maximal in $K \setminus \sigma$. Moreover, $L$ can be chosen to be $K \setminus \{\sigma,\tau\}$ when it exists.
\end{lemma}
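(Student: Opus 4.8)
The plan is to observe that a commuting square of inclusions of simplicial complexes is a pushout precisely when the bottom-right object is the union of the two middle objects and the top-left object is their intersection. Since $\partial\sigma\cap L$ is by definition the intersection, the square in the statement is a pushout if and only if $L\cup\partial\sigma = K\setminus\sigma$. Thus the whole lemma reduces to analysing which subcomplexes $L\subseteq K\setminus\sigma$ satisfy $L\cup\partial\sigma = K\setminus\sigma$ together with $\partial\sigma\cap L\neq\partial\sigma$, and relating this to the combinatorial condition on $\tau$. Throughout I would use that $\sigma$ is maximal in $K$, so that any maximal face $\eta\neq\sigma$ of $K$ satisfies $\eta\not\subseteq\sigma$ and hence lies in $K\setminus\sigma$ but not in $\partial\sigma$.

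For the reverse implication, suppose a face $\tau\in\partial\sigma$ with $|\tau|=|\sigma|-1$ is maximal in $K\setminus\sigma$, and set $L=K\setminus\{\sigma,\tau\}$. Since $\tau$ is maximal in $K\setminus\sigma$, removing its interior leaves a genuine subcomplex, so $L$ is well defined. I would then check directly that $L\cup\partial\sigma=K\setminus\sigma$: the complex $L$ already contains every face of $K$ other than $\sigma$ and $\tau$, while $\partial\sigma$ supplies $\tau$, so the union is exactly $K\setminus\sigma$. Because $\tau\in\partial\sigma$ but $\tau\notin L$, we also obtain $\partial\sigma\cap L\neq\partial\sigma$, as required.

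For the forward implication, suppose $L$ gives such a pushout with $\partial\sigma\cap L\neq\partial\sigma$, so there is a face $\rho\in\partial\sigma$ with $\rho\notin L$. The key step is to show that $\sigma$ is the unique maximal face of $K$ containing $\rho$: if some maximal face $\eta\neq\sigma$ contained $\rho$, then $\eta$ would lie in $K\setminus\sigma$ but not in $\partial\sigma$ (as $\eta\not\subseteq\sigma$), forcing $\eta\in L$ by the union condition; then $\rho\subseteq\eta$ together with the fact that $L$ is downward closed would give $\rho\in L$, a contradiction. Now I would enlarge $\rho$ to a facet of $\sigma$: since $\rho\subsetneq\sigma$ there is a vertex $v\in\sigma\setminus\rho$, and $\tau=\sigma\setminus\{v\}$ is a codimension-one face of $\sigma$ with $\rho\subseteq\tau$. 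Any maximal face of $K$ containing $\tau$ also contains $\rho$, so $\sigma$ is the unique maximal face containing $\tau$; since $|\tau|=|\sigma|-1$, there is no face strictly between $\tau$ and $\sigma$, and this is exactly the statement that $\tau$ is maximal in $K\setminus\sigma$. This produces the required $\tau$.

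The main obstacle I anticipate is the forward direction, specifically upgrading an arbitrary excluded face $\rho$ to a codimension-one face $\tau$ that remains maximal in $K\setminus\sigma$; the content is that the property of $\sigma$ being the unique maximal face containing $\rho$ is inherited upward along $\rho\subseteq\tau\subsetneq\sigma$, with the codimension-one hypothesis ensuring maximality in $K\setminus\sigma$. The remaining points, namely that the pushout is just a union and that $K\setminus\{\sigma,\tau\}$ is a subcomplex, are routine once the maximality of $\sigma$ and of $\tau$ is invoked.
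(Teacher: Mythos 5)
Your proposal is correct and takes essentially the same approach as the paper: both reduce the pushout condition to the union condition $L \cup \partial\sigma = K\setminus\sigma$, both take $L = K\setminus\{\sigma,\tau\}$ in the reverse direction, and both exploit downward-closure of $L$ in the forward direction. The only difference is organizational: the paper proves the forward implication by contradiction, assuming every facet of $\sigma$ admits a strictly larger face in $K\setminus\sigma$ and concluding $\partial\sigma\cap L = \partial\sigma$, while you argue directly by enlarging a face $\rho\in\partial\sigma\setminus L$ to a facet $\tau$ of $\sigma$; these are contrapositive phrasings of the same argument.
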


\begin{proof}
        Suppose that $L$ exists but all maximal faces $\tau$ with respect to $\partial \sigma$ are not maximal in $K \setminus \sigma$. This implies there is a face $\gamma_\tau \in K \setminus \sigma$ such that $\tau \subset \gamma_\tau$. Since $\tau$ is maximal with respect to~$\partial \sigma$, the pushout implies that $\gamma_\tau$ must be contained in $L$, which in turn implies that $\tau \in L$. This is true for all maximal faces $\tau\in\partial\sigma$, so $\partial \sigma \cap L = \partial \sigma$, which is a contradiction.

    Conversely, let $\tau$ be a maximal face with respect to $\partial \sigma$ which is also maximal with respect to $K \setminus \sigma$. Define $L = K \setminus \{\sigma, \tau\}$. By definition, $\partial \sigma\cap L= \partial \sigma \setminus \tau \neq \partial \sigma$. Now let $\gamma \in K \setminus \sigma$ be such that $\gamma \notin \partial \sigma$. Since $\tau$ is maximal in $K \setminus \sigma$, we have $\tau \notin \gamma$, implying that $\gamma \in L$.
\end{proof} 

\begin{remark} 
\label{Lremark} 
It is worth noting for what comes in the next section that if $\sigma$ has dimension $d\geq 1$ then $K\setminus\sigma$ has the same vertex set as $K$. If $\sigma$ has dimension $d\geq 2$ then $L=K\setminus\{\sigma,\tau\}$ also has the same vertex set as $K$. 
\end{remark} 

\noindent 
\textbf{Spaces in $\mathcal{P}$}. 
Recall that $\mathcal{P}$ is the collection of $H$-spaces that are homotopy equivalent to a finite type product of spheres and loops on spheres. 
We state some properties of the collection $\mathcal{P}$ that will be needed. In~\cite[Theorem 3.10]{St1} it was shown that the property of being in $\mathcal{P}$ is preserved by retractions.

\begin{theorem}
    \label{thm:Pclosedunderret}
    Let $X \in \mathcal{P}$ and $A$ be a space which retracts off $X$. Then $A \in \mathcal{P}$. $\qqed$
\end{theorem}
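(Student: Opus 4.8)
The plan is to reduce the statement to a structural fact about how retracts interact with the standard product decomposition, and then to realize the resulting homological splitting by an honest product of the allowed factors. First I would replace $X$ by a genuine finite-type product $P = \prod_\alpha S^{n_\alpha} \times \prod_\beta \Omega S^{m_\beta}$ using the homotopy equivalence supplied by $X \in \mathcal{P}$, and record the retraction data as maps $A \xrightarrow{\,s\,} P \xrightarrow{\,r\,} A$ with $r \circ s \simeq \mathrm{id}_A$. The point of working with $P$ is that its homology is completely transparent: $H_*(P;\mathbb{Z})$ is a finite-type free graded-commutative algebra, namely a tensor product of exterior algebras on odd-degree generators (one for each sphere factor, all of which are odd-dimensional since $P$ is an $H$-space, and one for the bottom class of each $\Omega S^{\mathrm{even}}$) and polynomial algebras on even-degree generators (from each $\Omega S^{\mathrm{odd}}$ and from the squared class of each $\Omega S^{\mathrm{even}}$). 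In particular $H_*(P;\mathbb{Z})$ is torsion-free.

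Since $s$ is a section of $r$, the induced map $s_*$ is a split monomorphism of graded abelian groups, so $H_*(A;R)$ is a direct summand of $H_*(P;R)$ for every coefficient ring $R$. This immediately gives that $H_*(A;\mathbb{Z})$ is torsion-free and of finite type, and that rationally and mod each prime $p$ the homology of $A$ is a coalgebra summand of a free graded-commutative Hopf algebra. The substance of the proof is to upgrade this algebraic splitting to a homotopy-level one, and I would do this by studying the homotopy idempotent $e = s \circ r \colon P \to P$. Localised at each prime and rationally, the factors $S^1, S^3, S^7$ and $\Omega S^m$ break up into atomic $H$-spaces, and finite-type $H$-spaces of this kind admit an essentially unique decomposition into indecomposable factors. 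An idempotent self-map therefore acts, after an automorphism of $P$, as the projection onto a sub-product of these atomic pieces, so its image $A$ is homotopy equivalent to a product of a sub-collection of the original sphere and loop-on-sphere factors. Reassembling the local information over all primes and $\mathbb{Q}$ through the arithmetic fracture square — which is legitimate because everything is of finite type and nilpotent — recovers the integral homotopy type of $A$ as such a product, and in particular endows $A$ with the $H$-structure forcing $A \in \mathcal{P}$.

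The main obstacle is precisely this realisation step: passing from the fact that $H_*(A)$ is a sub-Hopf-algebra summand to the assertion that $A$ is homotopy equivalent to a product of the designated atomic factors, with no exotic retract occurring. Two points require care. First, the factors $\Omega S^m$ do not remain indecomposable after $p$-localisation (for instance $\Omega S^{2n+1}$ splits at odd primes), so the atomic decomposition must be set up prime by prime and the rigidity statement invoked at that level before mixing. Second, one must verify that the homology summand carved out by $e$ is spanned by classes that are genuinely spherical, so that it is realised by spheres and loops on spheres rather than by some other $H$-space with the same homology; this is where the hypothesis that $A$ sits inside the very rigid space $P$, rather than being an abstract $H$-space with prescribed homology, does the work. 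The finite-type and $H$-space hypotheses are essential throughout, and it is worth noting that they also guarantee \emph{a posteriori} that $A$ is an $H$-space, which is part of the claim $A \in \mathcal{P}$.
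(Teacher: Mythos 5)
This paper does not actually prove Theorem~\ref{thm:Pclosedunderret}: it is quoted, with the box, from \cite[Theorem 3.10]{St1}, so there is no in-paper argument to compare yours against; I can only judge your proposal against what a correct proof must accomplish. Your overall architecture (pass to a genuine product $P$, record the homological splitting, localise, decompose into atomic factors, reassemble) is the natural one, but the two steps that carry all the weight are asserted rather than proved. The first is the claim that an idempotent self-map of $P_{(p)}$, after composing with an automorphism, becomes the projection onto a sub-product of the atomic factors. A homotopy retract is not a direct factor: the data $rs \simeq \mathrm{id}_A$ does not exhibit $P_{(p)}$ as $A_{(p)} \times C$ for any complement $C$, so even granting a Krull--Schmidt-type uniqueness of decomposition you have nothing to cancel against. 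Moreover, the uniqueness statement itself is not a standard citable fact for this class: the classical unique-decomposition and cancellation theorems (Wilkerson and others) concern $p$-local \emph{finite} complexes or similarly restricted classes, whereas $P$ has infinite-dimensional factors $\Omega S^m$. In fact, ``a retract of a product of $p$-local spheres and loops on spheres is again such a product'' is precisely the theorem being proved, localised at $p$; as written, this step of your argument is essentially circular.

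The second gap is the final reassembly, and here the stated justification is a false principle. You claim the arithmetic fracture square recovers the integral homotopy type of $A$ as a product ``because everything is of finite type and nilpotent.'' Finite-type nilpotent spaces are emphatically not determined by their rationalisation and $p$-localisations: that is the genus problem. For example, $Sp(2)$ and the Hilton--Roitberg manifold are simply-connected finite $H$-spaces whose localisations agree at every prime and rationally, yet they are not homotopy equivalent. The fracture square determines $A$ only from its localisations \emph{together with} the gluing data over the rationals, so from $A_{(p)} \simeq Q_{(p)}$ for all $p$ and $A_{(0)} \simeq Q_{(0)}$ (with $Q$ a product of spheres and loops on spheres) you may only conclude that $A$ lies in the genus of $Q$, not that $A \simeq Q$ --- unless you either produce equivalences compatible in the square or prove that the genus of such products $Q$ is trivial, neither of which is addressed. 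Any correct proof has to extract integral splitting maps from the integral retraction $A \to X \to A$, rather than relying on prime-by-prime information alone; this is where the real content of \cite[Theorem 3.10]{St1} lies, and it is the part your proposal leaves unproved.
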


One source of retractions in the context of polyhedral products come from the following result from \cite{DS}. If $K$ is a simplicial complex on the vertex set $[m]$ and $I\subseteq [m]$ then the \emph{full subcomplex} $K_{I}$ of $K$ on $I$ is the subcomplex of $K$ consisting of the faces of $K$ whose vertices are all in $I$. 

\begin{lemma}
    \label{lem:DS}
    If $K$ is a simplicial complex, and $K_I$ is a full subcomplex of $K$, then $\uxa^{K_I}$ retracts off $\uxa^K$. $\qqed$
\end{lemma}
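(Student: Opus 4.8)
The plan is to exhibit an explicit (strict) retraction rather than argue abstractly. Writing the ambient products as $\prod_{i=1}^m X_i$ and $\prod_{i\in I} X_i$, I would define two maps between them: the coordinate projection $\pi\colon \prod_{i=1}^m X_i \to \prod_{i\in I} X_i$ that forgets the factors indexed by $[m]\setminus I$, and the inclusion $\iota\colon \prod_{i\in I} X_i \to \prod_{i=1}^m X_i$ that inserts the basepoint $\ast\in A_j\subseteq X_j$ in each coordinate $j\notin I$. Then $\pi\circ\iota$ is the identity on $\prod_{i\in I} X_i$, so once I check that $\pi$ and $\iota$ restrict to maps $\uxa^K\to\uxa^{K_I}$ and $\uxa^{K_I}\to\uxa^K$ respectively, the restricted composite is the identity on $\uxa^{K_I}$ and the retraction follows with $r=\pi|$ and $i=\iota|$.

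Next I would verify that the inclusion restricts. A point of $\uxa^{K_I}$ lies in $\uxa^\tau$ for some face $\tau\in K_I$, meaning its $i$-th coordinate is in $X_i$ for $i\in\tau$ and in $A_i$ for $i\in I\setminus\tau$. Applying $\iota$ fills the remaining coordinates $j\notin I$ with $\ast\in A_j$. Since $K_I$ is a subcomplex of $K$, the face $\tau$ also belongs to $K$, and the image point then has its coordinates in $X_i$ for $i\in\tau$ and in $A_i$ for every $i\notin\tau$; hence it lies in $\uxa^\tau\subseteq\uxa^K$.

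The crux is checking that $\pi$ restricts to a map $\uxa^K\to\uxa^{K_I}$, and this is precisely where fullness of $K_I$ is used. Given a point of $\uxa^K$ sitting in $\uxa^\sigma$ for some $\sigma\in K$, its image under $\pi$ has $i$-th coordinate in $X_i$ for $i\in\sigma\cap I$ and in $A_i$ for $i\in I\setminus\sigma$. I would then argue that $\sigma\cap I$ is a face of $K_I$: it is a subset of the face $\sigma\in K$, hence itself a face of $K$, and all of its vertices lie in $I$, so by the defining property of a full subcomplex it is a face of $K_I$. Consequently the image lies in $\uxa^{\sigma\cap I}\subseteq\uxa^{K_I}$.

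The main obstacle is this last step, and it is the whole content of the lemma: for a subcomplex that is not full, the intersection $\sigma\cap I$ need not be a face, so the projection would fail to land in the smaller polyhedral product and the argument would break down. Thus the proof genuinely depends on fullness, and it is the only place where that hypothesis enters. Assembling the three observations above — that $\pi\circ\iota=\mathrm{id}$, that $\iota$ restricts, and that $\pi$ restricts — yields $r\circ i=\mathrm{id}_{\uxa^{K_I}}$, which is the desired retraction.
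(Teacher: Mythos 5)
Your proof is correct: the paper itself only cites this result to Denham--Suciu \cite{DS} without proof, and your explicit projection/inclusion argument, with $\pi\circ\iota=\mathrm{id}$ and fullness used exactly to ensure $\sigma\cap I\in K_I$ so that the projection lands in $\uxa^{K_I}$, is precisely the standard proof given in that reference. Nothing is missing.
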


We next describe two collections of polyhedral products that are in $\mathcal{P}$. Let $\mathcal{W}$ be the collection of topological spaces that are homotopy equivalent to a finite type wedge of spheres. The first result was proved in ~\cite[Theorem 1.1]{St1} and the second in~\cite[Corollary 6.5]{St2}.

\begin{theorem}
    \label{thm:graphinP}
    Let $K$ be a $1$-dimensional simplicial complex on $[m]$. Let $A_1,\cdots,A_m$ be spaces such that $\Sigma A_i \in \mathcal{W}$. Then $\Omega \caa^{K} \in \mathcal{P}$. $\qqed$
\end{theorem}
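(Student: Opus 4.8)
The plan is to induct on the number of edges (maximal one-simplices) of $K$, the inductive step being driven by the homotopy pushout obtained by deleting a maximal edge, together with the closure of $\mathcal{P}$ under finite products and retracts (Theorem~\ref{thm:Pclosedunderret}). The only external homotopy-theoretic input is the Hilton--Milnor theorem, which guarantees that the loop space of a finite type wedge of spheres lies in $\mathcal{P}$.

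For the base case, $K$ has no edges and is therefore a discrete set of vertices. Then $\caa^{K}$ is the polyhedral product over a $0$-skeleton of a simplex, and since each $\Sigma A_{i}\in\mathcal{W}$ its known homotopy decomposition exhibits $\caa^{K}$ as a finite type wedge of spheres; for instance, when every $A_{i}=S^{0}$ this is just the one-skeleton of a cube. The Hilton--Milnor theorem then gives $\Omega\caa^{K}\in\mathcal{P}$.

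For the inductive step, fix a maximal edge $\sigma=\{i,j\}$ and write $K=(K\setminus\sigma)\cup\sigma$ with $(K\setminus\sigma)\cap\sigma=\partial\sigma$, all regarded as subcomplexes on the common vertex set $[m]$ (note that $K\setminus\sigma$ retains every vertex, as in Remark~\ref{Lremark}). Since the polyhedral product functor carries this union of subcomplexes to a homotopy pushout, I obtain
\[
\caa^{K}\simeq\mathrm{hocolim}\big(\caa^{K\setminus\sigma}\longleftarrow\caa^{\partial\sigma}\longrightarrow\caa^{\sigma}\big).
\]
Because $CA_{i}$ and $CA_{j}$ are contractible, $\caa^{\sigma}\simeq\prod_{k\neq i,j}A_{k}$ and $\caa^{\partial\sigma}\simeq(A_{i}\ast A_{j})\times\prod_{k\neq i,j}A_{k}$, and the right-hand map is homotopic to the projection collapsing the join factor $A_{i}\ast A_{j}=\Sigma(A_{i}\wedge A_{j})$. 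This join factor is a suspension of a wedge of spheres, hence itself a wedge of spheres, while $K\setminus\sigma$ is a $1$-dimensional complex with one fewer edge, so $\Omega\caa^{K\setminus\sigma}\in\mathcal{P}$ by the inductive hypothesis.

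The crux, and the step I expect to be the main obstacle, is to loop this pushout. The aim is to show that the map $\caa^{\partial\sigma}\to\caa^{\sigma}$ (equivalently, the attaching of the coned-off join factor) is \emph{inert}, so that $\Omega\caa^{K\setminus\sigma}$ retracts off $\Omega\caa^{K}$ and the complementary homotopy fibre is identified, after looping, with the loop space of a wedge of spheres assembled from $A_{i}\ast A_{j}$ and its Whitehead products with $\caa^{K\setminus\sigma}$. This is precisely where the retraction-generating properties established in~\cite{St1,St2} are meant to supply the required sections. Granting inertness, $\Omega\caa^{K}\simeq\Omega\caa^{K\setminus\sigma}\times\Omega(\text{wedge of spheres})$; the first factor lies in $\mathcal{P}$ by induction and the second by Hilton--Milnor, so $\Omega\caa^{K}\in\mathcal{P}$ since $\mathcal{P}$ is closed under finite products and, by Theorem~\ref{thm:Pclosedunderret}, under retracts. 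The delicate point throughout is controlling the Whitehead products that measure the non-triviality of the decomposition and verifying that they do not obstruct inertness for an arbitrary, possibly non-flag, graph.
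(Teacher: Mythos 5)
Your base case and your pushout set-up are fine (the identification $\caa^{\partial\sigma}\simeq (A_i\ast A_j)\times\prod_{k\neq i,j}A_k$ and the homotopy pushout are correct), but the crux step fails, in two distinct ways. First, your retraction claim points in the wrong direction, and as stated it is false: coning off a maximal face can only make a polyhedral product homotopically smaller, so $\Omega\caa^{K\setminus\sigma}$ cannot in general retract off $\Omega\caa^{K}$. The simplest counterexample is $m=2$ with $K$ the single edge $\{1,2\}$: then $\caa^{K}=CA_1\times CA_2\simeq \ast$ while $\caa^{K\setminus\sigma}\simeq A_1\ast A_2$, so the asserted decomposition $\Omega\caa^{K}\simeq\Omega\caa^{K\setminus\sigma}\times\Omega(\mbox{wedge of spheres})$ is impossible. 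What inertness actually gives is the opposite retraction, as in Theorem~\ref{faceinert}: $\caa^{K}$ retracts off $\caa^{K\setminus\sigma}$. Note that with this corrected direction your induction would still close up, since $\Omega\caa^{K\setminus\sigma}\in\mathcal{P}$ by hypothesis together with Theorem~\ref{thm:Pclosedunderret} would give $\Omega\caa^{K}\in\mathcal{P}$.

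Second, and fatally, even the corrected retraction fails when an arbitrary maximal edge is removed. It requires the edge $\sigma$ to have an endpoint of degree one: this is precisely the hypothesis of Theorem~\ref{faceinert} that some codimension-one face $\tau\in\partial\sigma$ be maximal in $K\setminus\sigma$ (and that theorem is in any case only established for $\dim\sigma\geq 2$; the cone argument of Lemma~\ref{lem:Lnullhtpc} degenerates for edges). For any graph of minimum degree at least two the inductive step therefore collapses. Concretely, take $K=\partial\Delta^{2}$, the $3$-cycle, with $(D^2,S^1)$: then $\zk\simeq S^5$ while $\mathcal{Z}_{K\setminus\sigma}\simeq S^3$, and rational homotopy groups show that neither of $\Omega S^3$, $\Omega S^5$ retracts off the other, so no relation of the kind you need holds in either direction. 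Relatedly, the tools you hope will supply the missing sections cannot apply to your pushout: Theorem~\ref{thm:pushoutofPisinP} requires the gluing complex to be a \emph{full} subcomplex of both pieces, and $\partial\sigma$ is the prototypical non-full subcomplex of $\sigma$. Finally, be aware that the paper does not prove this theorem; it quotes it from \cite[Theorem 1.1]{St1}. The proof there inducts on vertices rather than edges, using the pushout of $K\setminus v$ and $K_{v\cup N(v)}$ over the full subcomplex $K_{N(v)}$ -- the pattern visible in Theorem~\ref{thm:restinPimplyP} -- precisely so that Lemma~\ref{lem:DS} and Theorem~\ref{thm:pushoutofPisinP} are applicable at every stage.
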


\begin{theorem}
    \label{thm:2diminP}
    Let $K$ be a $2$-dimensional simplicial complex on $[m]$. Let $A_1,\cdots,A_m$ be spaces such that $\Sigma A_i \in \mathcal{W}$. If $H_*(|L|)$ is torsion free for all full subcomplexes $L$ of $K$ with complete $1$-skeleton, then $\Omega \caa^{K} \in \mathcal{P}$. $\qqed$
\end{theorem}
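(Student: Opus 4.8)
The plan is to argue by induction on the number of two-dimensional faces of $K$. If $K$ has no two-faces then it is at most one-dimensional, so its homology and that of all its full subcomplexes is free, and Theorem~\ref{thm:graphinP} gives $\Omega\caa^{K}\in\mathcal{P}$; this is the base case. For the inductive step I would first use a combinatorial input, in the spirit of Lemmas~\ref{lem:vertexremoval} and~\ref{lem:Lexists}, to select a maximal two-simplex $\sigma$ whose removal leaves a complex $K\setminus\sigma$ still satisfying the hypotheses. The point to check here is that the torsion-free condition is inherited: a full subcomplex of $K\setminus\sigma$ is either a full subcomplex of $K$ or one with the single face $\sigma$ deleted, and since the top homology of any $2$-complex is free, torsion can live only in $H_{1}$, so a careful choice of $\sigma$ keeps $H_{*}$ torsion-free on full subcomplexes with complete $1$-skeleton.

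Next I would set up the homotopy pushout governing the effect of filling in $\sigma$. Writing $K=(K\setminus\sigma)\cup\sigma$ as a union of subcomplexes with $(K\setminus\sigma)\cap\sigma=\partial\sigma$, the polyhedral product functor sends this to the union of subspaces $\caa^{K}=\caa^{K\setminus\sigma}\cup\caa^{\sigma}$ with intersection $\caa^{\partial\sigma}$; as all maps are inclusions of $CW$-subcomplexes this is a homotopy pushout
\[\caa^{K}\simeq\caa^{K\setminus\sigma}\cup_{\caa^{\partial\sigma}}\caa^{\sigma}.\]
Since $\sigma$ is a simplex, $\caa^{\sigma}=\prod_{i\in\sigma}CA_{i}\times\prod_{i\notin\sigma}A_{i}\simeq\prod_{i\notin\sigma}A_{i}$, so passing from $K\setminus\sigma$ to $K$ glues in this product along the image of $\caa^{\partial\sigma}$, an attachment detected by a higher Whitehead product. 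By the inductive hypothesis together with Theorem~\ref{thm:Pclosedunderret} and the retraction of full subcomplexes in Lemma~\ref{lem:DS}, $\Omega\caa^{K\setminus\sigma}\in\mathcal{P}$.

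The heart of the argument is then to loop the pushout, and here the torsion-free hypothesis enters decisively. Its meaning is clarified by the Bahri--Bendersky--Cohen--Gitler stable splitting $\Sigma\caa^{K}\simeq\bigvee_{I}\Sigma|K_{I}|\wedge\widehat{A}^{I}$: the full subcomplexes $K_{I}$ with incomplete $1$-skeleton are homotopy equivalent to wedges of spheres, so together with $\Sigma A_{i}\in\mathcal{W}$ the only summands that could carry torsion are those indexed by full subcomplexes with complete $1$-skeleton, which the hypothesis forbids. Consequently the attaching map for $\sigma$ represents a free homology class, and I would show it is inert, so that after looping the homotopy fibration associated to the pushout splits and yields a product decomposition of $\Omega\caa^{K}$ as $\Omega\caa^{K\setminus\sigma}$ times loops on spheres. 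Each factor lies in $\mathcal{P}$ by Hilton--Milnor and the inductive hypothesis, and $\mathcal{P}$ is closed under the resulting retraction by Theorem~\ref{thm:Pclosedunderret}, completing the induction.

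The main obstacle is precisely this last step: controlling the based loop space of the homotopy pushout, that is, proving the attachment of the two-cell $\sigma$ is inert so that $\Omega\caa^{K}$ splits as a product. The stable splitting only records homology and cannot by itself detect the multiplicative loop-space structure, so inertness must be established directly. This is where the torsion-free hypothesis does the real work, ensuring that the Whitehead product generating the attachment contributes a free tensor-algebra summand to $H_{*}(\Omega\caa^{K})$ rather than an obstruction, and hence that the decomposition into spheres and loops on spheres persists.
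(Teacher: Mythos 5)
Your outline founders at exactly the step you flag as the ``main obstacle,'' and the problem is not merely that the inertness claim is left unproved: it is false under the hypotheses of the theorem. Take $K=\partial\Delta^{3}$ with $(CA_{i},A_{i})=(D^{2},S^{1})$, so $\caa^{K}=\zk$. This $K$ satisfies every hypothesis of Theorem~\ref{thm:2diminP} (its $1$-skeleton is the complete graph $K_{4}$, and the only full subcomplex with complete $1$-skeleton is $K$ itself, with $|K|=S^{2}$). Here $\zk=S^{7}$, while for any $2$-face $\sigma$ the complex $K\setminus\sigma$ is a cone $v\ast\partial\tau$, so $\mathcal{Z}_{K\setminus\sigma}\cong D^{2}\times S^{5}\simeq S^{5}$, and the inclusion-induced map $\mathcal{Z}_{K\setminus\sigma}\rightarrow\zk$ is null homotopic ($S^{5}\rightarrow S^{7}$). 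Consequently $\Omega\mathcal{Z}_{K\setminus\sigma}\rightarrow\Omega\zk$ has no right homotopy inverse, $\Omega S^{7}$ is not a retract of $\Omega S^{5}$, and neither of the splittings your step would produce can hold: $H_{4}(\Omega S^{7})=0$ while $H_{4}(\Omega S^{5})=\mathbb{Z}$, and $H_{6}(\Omega S^{5})=0$ while $H_{6}(\Omega S^{7})=\mathbb{Z}$. Since all $2$-faces of $\partial\Delta^{3}$ are equivalent, no ``careful choice of $\sigma$'' can help, so the induction on the number of $2$-faces cannot even begin here. (Two smaller points: inertness, when it does hold, splits $\Omega\caa^{K\setminus\sigma}$ with $\Omega\caa^{K}$ as a factor, not $\Omega\caa^{K}$ with $\Omega\caa^{K\setminus\sigma}$ as a factor, so your stated decomposition is backwards; and your hypothesis-inheritance claim also needs care, since deleting a $2$-face can create torsion in $H_{1}$ of a full subcomplex --- the Mayer--Vietoris kernel is cyclic, and attaching a disc along a torsion class kills it, so removing one can introduce it.)

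For comparison: this paper does not prove Theorem~\ref{thm:2diminP} at all --- it is imported from \cite[Corollary 6.5]{St2}, whose argument is not a face-removal induction. Within this paper, face removal is made to work only under an extra combinatorial condition: Theorem~\ref{faceinert} requires a codimension one face $\tau\in\partial\sigma$ that is maximal in $K\setminus\sigma$, and it is precisely this condition that fails for $\partial\Delta^{3}$ and for every triangulation of a closed surface, where each edge lies in two triangles. That failure is the reason the paper pairs face removal (applied only to pseudomanifolds \emph{with boundary}, via the dual-graph Lemma~\ref{lem:vertexremoval} and Theorem~\ref{thm:maniwithboundretskel}) with a completely different mechanism for closed complexes, namely vertex deletion and the pushout over full subcomplexes in Theorem~\ref{thm:restinPimplyP} together with Theorem~\ref{thm:pushoutofPisinP}. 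Any repair of your outline would have to follow a similar two-track structure: use cell-attachment arguments only where an inertness criterion like Theorem~\ref{faceinert} genuinely applies, and handle the complexes where it does not (in particular closed pseudomanifolds) by pushout or retraction arguments that never invoke inertness of a single face.
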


If a space $X \in \mathcal{W}$, then the Hilton-Milnor theorem implies $\Omega X \in \mathcal{P}$. A result we will use to show that a space is in $\mathcal{W}$ is the following from \cite[Example 4C.2]{H}. 

\begin{lemma}
    \label{lem:homoldethomot}
    If $X$ is a simply connected space with cells in two consecutive dimensions and $H_*(X)$ is torsion free, then $X \in \mathcal{W}$. $\qqed$ 
\end{lemma}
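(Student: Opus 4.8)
The plan is to construct an explicit homology isomorphism from a wedge of spheres into $X$ and then invoke the homology version of Whitehead's theorem. Write the two consecutive dimensions as $n$ and $n+1$. Since $X$ is simply connected we have $H_1(X)=0$, so if $n=1$ the bottom-dimensional classes vanish and no $S^1$ factors are needed; we may therefore assume $n\geq 2$, so that $X$ is $(n-1)$-connected and its cellular chain complex is concentrated in degrees $n$ and $n+1$,
\[
0\longrightarrow C_{n+1}\stackrel{\partial}{\longrightarrow}C_n\longrightarrow 0.
\]
Hence $H_{n+1}(X)=\ker\partial$ is automatically free abelian, while $H_n(X)=C_n/\operatorname{im}\partial$ is free abelian by the torsion-freeness hypothesis. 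As $X$ is of finite type, both groups are finitely generated; write their ranks as $t$ and $s$ respectively.

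First I would realise a basis of $H_n(X)$ by sphere maps. Because $X$ has no cells below dimension $n$, the $n$-skeleton is a wedge $X^{(n)}=\bigvee S^n$ and every cellular $n$-chain is a cycle, so $C_n=H_n\big(X^{(n)}\big)$. Lifting a chosen basis of $H_n(X)$ to elements of $C_n$ and using the Hurewicz isomorphism $\pi_n\big(X^{(n)}\big)\cong H_n\big(X^{(n)}\big)$, each basis element is represented by a map $S^n\to X^{(n)}\hookrightarrow X$ carrying the fundamental class to the corresponding class in $H_n(X)$.

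Next I would realise a basis of $H_{n+1}(X)$. The key input here is that for the $(n-1)$-connected space $X$ the Hurewicz homomorphism $\pi_{n+1}(X)\to H_{n+1}(X)$ is surjective, which allows each element of a chosen basis of $H_{n+1}(X)$ to be represented by a map $S^{n+1}\to X$. Assembling all these maps gives
\[
f\colon W:=\Big(\textstyle\bigvee_{t} S^n\Big)\vee\Big(\textstyle\bigvee_{s} S^{n+1}\Big)\longrightarrow X,
\]
whose induced map on homology is block diagonal: it is an isomorphism on $H_n$ (the $S^{n+1}$ summands contribute nothing there) and on $H_{n+1}$ (the $S^n$ summands contribute nothing there), and it is trivially an isomorphism in all other degrees. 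Since $W$ and $X$ are both simply connected, the homology Whitehead theorem shows $f$ is a homotopy equivalence, whence $X\simeq W\in\mathcal{W}$.

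The main obstacle is the step realising $H_{n+1}(X)$ by maps from spheres: unlike the bottom class, a class in $H_{n+1}$ need not be supported on a single cell, and its representability rests on surjectivity of the Hurewicz map one dimension above the connectivity of $X$. The torsion-freeness hypothesis is used precisely to guarantee that $H_n(X)$ is free, so that a basis exists which lifts through $C_n$; with torsion present the cokernel of $\partial$ could not be matched by any wedge of spheres. Once both bases are represented, the remainder of the argument is formal via Whitehead's theorem.
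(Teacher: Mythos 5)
Your proof is correct, but note that the paper does not actually prove this lemma itself: it is quoted from \cite[Example 4C.2]{H}, which is why the statement ends with a box rather than a proof. So the comparison is really with Hatcher's cited argument, which runs through minimal cell structures (\cite[Proposition 4C.1]{H}): since $H_*(X)$ is free and concentrated in dimensions $n$ and $n+1$, $X$ is homotopy equivalent to a CW complex $Z$ whose cells biject with a homology basis; the cellular boundary map of $Z$ is then zero, so each attaching map $S^n \to Z^{(n)} = \bigvee S^n$ has all degrees zero and is null-homotopic by the Hurewicz theorem, exhibiting $Z$ as a wedge of spheres. Your route is genuinely different and self-contained: you build an explicit map from a wedge realizing a basis of $H_*(X)$ and conclude by the homology Whitehead theorem. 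You also correctly isolate the crux, namely surjectivity of the Hurewicz map $\pi_{n+1}(X) \to H_{n+1}(X)$ for an $(n-1)$-connected space. That is a standard strengthening of the Hurewicz theorem, and in this special setting it can be verified directly: the relative Hurewicz theorem gives $\pi_{n+1}(X,X^{(n)}) \cong H_{n+1}(X,X^{(n)}) = C_{n+1}$, and a cellular cycle pulls back to $\pi_{n+1}(X)$ by exactness, because its image under the connecting map in $\pi_n(X^{(n)}) \cong C_n$ is its cellular boundary, which is zero. What your approach buys is independence from the minimal-cell-structure machinery; what Hatcher's buys is brevity once that machinery is in place.

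Three minor corrections. First, your indices are transposed: with $t = \operatorname{rank} H_{n+1}(X)$ and $s = \operatorname{rank} H_n(X)$, the wedge should read $W = \bigl(\bigvee_{s} S^n\bigr) \vee \bigl(\bigvee_{t} S^{n+1}\bigr)$. Second, the case $n=1$ is not literally reduced to $n \geq 2$ by your opening remark; rather it is handled by the second half of your own argument: $X$ is $1$-connected, $H_2(X) = \ker\partial$ is free, so Hurewicz realizes a basis by maps $S^2 \to X$ and homology Whitehead gives $X \simeq \bigvee S^2$. Say this instead of waving the case away. Third, you insert the hypothesis that $X$ has finite type; this is the right reading of the lemma, since $\mathcal{W}$ consists of finite type wedges and, without finite generation, torsion-freeness of $H_n(X)$ would not even imply freeness, but you should flag it as an implicit hypothesis of the statement rather than something derived.
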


Finally, in~\cite[Theorem 4.1]{St1} it was shown that $\mathcal{P}$ is closed under pushouts over full subcomplexes.

\begin{theorem}
\label{thm:pushoutofPisinP}
    Let $K$ be a simplicial complex defined as the pushout \[\begin{tikzcd}
	L & {K_1} \\
	{K_2} & K
	\arrow[from=1-1, to=1-2]
	\arrow[from=1-2, to=2-2]
	\arrow[from=1-1, to=2-1]
	\arrow[from=2-1, to=2-2]
\end{tikzcd}\] where either $L = \emptyset$ or $L$ is a proper full subcomplex of $K_1$ and $K_2$. If $\Sigma A_i \in \mathcal{W}$ for all $i$, $\Omega \caa^{K_1} \in \mathcal{P}$ and $\Omega \caa^{K_2} \in \mathcal{P}$, then $\Omega \caa^K \in \mathcal{P}$. $\qqed$
\end{theorem}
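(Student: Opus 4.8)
The plan is to push the simplicial pushout down to a homotopy pushout of polyhedral products, use the full subcomplex hypothesis to manufacture a retraction, and then split the loop space. Since $K=K_1\cup K_2$ with $K_1\cap K_2=L$, and the polyhedral product is the union $\caa^{K}=\bigcup_{\sigma\in K}\caa^{\sigma}$ over faces, the functor $\caa^{(-)}$ carries this to $\caa^{K}=\caa^{K_1}\cup_{\caa^{L}}\caa^{K_2}$; because inclusions of full subcomplexes induce subcomplex inclusions (cofibrations) of polyhedral products, this is a homotopy pushout. Since $L$ is a proper full subcomplex of both $K_1$ and $K_2$, Lemma~\ref{lem:DS} supplies retractions $r_1\colon\caa^{K_1}\to\caa^{L}$ and $r_2\colon\caa^{K_2}\to\caa^{L}$. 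In particular $\Omega\caa^{L}$ is a retract of $\Omega\caa^{K_1}\in\mathcal P$, so $\Omega\caa^{L}\in\mathcal P$ by Theorem~\ref{thm:Pclosedunderret}.

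The crucial step is to glue the two retractions. Each of $r_1,r_2$ restricts to the identity on $\caa^{L}$, so by the universal property of the (homotopy) pushout they assemble into a single retraction $R\colon\caa^{K}\to\caa^{L}$ of the canonical inclusion $i\colon\caa^{L}\to\caa^{K}$. Hence the homotopy fibration $F\to\caa^{K}\xrightarrow{\,R\,}\caa^{L}$, with $F=\mathrm{hofib}(R)$, admits the section $i$; and looping a fibration of simply connected spaces that has a section splits the total loop space as a product, giving $\Omega\caa^{K}\simeq\Omega\caa^{L}\times\Omega F$.

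It then remains to prove $\Omega F\in\mathcal P$. The same retractions give split fibrations $F_i\to\caa^{K_i}\xrightarrow{r_i}\caa^{L}$, so $\Omega\caa^{K_i}\simeq\Omega\caa^{L}\times\Omega F_i$ and $\Omega F_i$ is a retract of $\Omega\caa^{K_i}\in\mathcal P$, whence $\Omega F_i\in\mathcal P$. One then applies Mather's cube theorem to the homotopy pushout $\caa^{K}=\caa^{K_1}\cup_{\caa^{L}}\caa^{K_2}$ equipped with the compatible maps $\mathrm{id},r_1,r_2,R$ to the common base $\caa^{L}$: taking vertical homotopy fibres (the fibre over $\caa^{L}$ being a point) identifies $F$ with the homotopy pushout of $F_1\leftarrow\ast\to F_2$, that is, $F\simeq F_1\vee F_2$. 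By Porter's theorem $\Omega(F_1\vee F_2)\simeq\Omega F_1\times\Omega F_2\times\Omega\Sigma(\Omega F_1\wedge\Omega F_2)$. Since $\Omega F_1,\Omega F_2\in\mathcal P$ have torsion free homology of finite type, $\Sigma(\Omega F_1\wedge\Omega F_2)$ is a simply connected suspension with torsion free homology, hence a wedge of spheres, so it lies in $\mathcal W$; the Hilton--Milnor theorem then gives $\Omega\Sigma(\Omega F_1\wedge\Omega F_2)\in\mathcal P$. As $\mathcal P$ is closed under finite products, $\Omega F\in\mathcal P$, and therefore $\Omega\caa^{K}\simeq\Omega\caa^{L}\times\Omega F\in\mathcal P$.

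I expect the main obstacle to be the splitting itself: producing the global retraction $R$ off $\caa^{K}$ and then identifying its fibre. The full subcomplex hypothesis is exactly what drives both, since via Lemma~\ref{lem:DS} it yields $r_1,r_2$ and hence $R$, and via Mather's cube theorem it forces $F\simeq F_1\vee F_2$ with summands already in range. The degenerate case $L=\emptyset$ should be handled separately and more easily: then $\caa^{L}$ is a point, $R$ is trivial, $\caa^{K}\simeq\caa^{K_1}\vee\caa^{K_2}$, and the conclusion is immediate from Porter's theorem. One should also record the connectivity hypotheses, namely simple connectivity of the polyhedral products, which in this setting follows from $\Sigma A_i\in\mathcal W$, needed to legitimise the fibration and cube arguments; verifying these is routine.
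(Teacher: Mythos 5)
The paper never proves Theorem~\ref{thm:pushoutofPisinP} itself; it is quoted with the proof deferred to \cite[Theorem~4.1]{St1}, so your proposal must stand on its own. It does not: there is a genuine gap at the very first step, and it propagates through the whole argument. The gap is the claim that the simplicial pushout induces a homotopy pushout of spaces with corners $\caa^{L}$, $\caa^{K_1}$, $\caa^{K_2}$, $\caa^{K}$. Because $L$ is a \emph{proper} full subcomplex of $K_1$ and of $K_2$, its vertex set is strictly smaller than $V(K_1)$ and $V(K_2)$, which are in turn strictly smaller than $V(K)=[m]$; passing to polyhedral products therefore introduces ghost vertices. By \cite[Proposition~3.1]{GT} --- the very result this paper uses to produce diagram~\eqref{GTpo} --- the induced pushout has corners $\caa^{L}\times\prod_{i\notin V(L)}A_i$, $\caa^{K_1}\times\prod_{i\notin V(K_1)}A_i$, $\caa^{K_2}\times\prod_{i\notin V(K_2)}A_i$ and $\caa^{K}$. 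Whichever reading you intend, something fails: with the corners as you wrote them the square is simply not a homotopy pushout, while with the corrected corners the initial fibre in your Mather cube is $\prod_{i\notin V(L)}A_i$ rather than a point, so the cube yields $F\simeq\bigl(F_1\times\prod_{i\notin V(K_1)}A_i\bigr)\cup_{\prod_{i\notin V(L)}A_i}\bigl(F_2\times\prod_{i\notin V(K_2)}A_i\bigr)$, not $F\simeq F_1\vee F_2$. Concretely, take $K_1=\{1,2\}$ and $K_2=\{1,3\}$ to be edges and $L=\{1\}$, so $K$ is the path with edges $\{1,2\}$ and $\{1,3\}$. Then $\caa^{L}$, $\caa^{K_1}$, $\caa^{K_2}$ are all contractible, so your argument gives $F\simeq\ast$ and hence $\Omega\caa^{K}\simeq\Omega\caa^{L}\times\Omega F\simeq\ast$; but in fact $\caa^{K}=CA_1\times\bigl((CA_2\times A_3)\cup(A_2\times CA_3)\bigr)\simeq A_2\ast A_3\simeq\Sigma A_2\wedge A_3$, which for $(D^2,S^1)$ is $\mathcal{Z}_K\simeq S^3$. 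The same oversight invalidates your degenerate case: for $K$ two disjoint vertices, $\caa^{K}\simeq A_1\ast A_2$ (for example $\mathcal{Z}_K\cong S^3$), not $CA_1\vee CA_2\simeq\ast$.

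A symptom of the problem is that your argument never genuinely uses the hypothesis $\Sigma A_i\in\mathcal{W}$: the one place you invoke it (simple connectivity of the polyhedral products) is neither implied by it (take $A_i=S^0$, so $\Sigma A_i\simeq S^1\in\mathcal{W}$, while $\mathbb{R}\zk$ can be a closed surface of positive genus) nor the reason it is assumed. In the actual proof in \cite{St1}, that hypothesis is precisely what allows the extraneous $\prod A_i$ factors identified above to be split off, by half-smash decompositions of exactly the kind this paper exploits in Lemma~\ref{potype} and Theorem~\ref{faceinert}; handling those factors is the real content of the theorem. Two smaller points: gluing $r_1$ and $r_2$ is unnecessary, since $L$ is a full subcomplex of $K$ itself and Lemma~\ref{lem:DS} gives the retraction $R$ directly; and your justification that $\Sigma(\Omega F_1\wedge\Omega F_2)$ is a wedge of spheres \emph{because} it is a simply connected suspension with torsion-free homology is false in general ($\Sigma\mathbb{C}P^2$ is a counterexample), although the conclusion is recoverable from the explicit product form of spaces in $\mathcal{P}$ together with the James splitting. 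Those two issues are repairable; the pushout/cube identification is the step that breaks the proof.
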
 

\noindent 
\textbf{A homotopy pushout}. 
It will be important to identify the homotopy type of a certain homotopy pushout. 
For spaces $X$ and $Y$, the \textit{right half-smash} of $X$ and $Y$, denoted $X \rtimes Y$, is the quotient $(X \times Y)/(* \times Y)$. 

\begin{lemma} 
   \label{potype} 
   Suppose that there is a homotopy pushout 
\[\begin{tikzcd}
	{A \times B} & {D \times B} \\
	C & Q
	\arrow["{* \times 1}", from=1-1, to=1-2]
	\arrow["f", from=1-1, to=2-1]
	\arrow[from=1-2, to=2-2]
	\arrow[from=2-1, to=2-2]
\end{tikzcd}\]
   where the restriction of $f$ to $B$ is null homotopic. Let  
   \(f'\colon\namedright{A\rtimes B}{}{C}\) 
   be the quotient map and let~$E$ be its homotopy cofibre. Then there is a homotopy equivalence 
   $Q\simeq(D\rtimes B)\vee E$. 
\end{lemma}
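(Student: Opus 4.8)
The plan is to reduce the displayed homotopy pushout to one of the form $C \xleftarrow{f'} A\rtimes B \xrightarrow{g} D\rtimes B$ in which $g$ is null homotopic, and then to use that pushing out along a null homotopic map yields a wedge. Working with $CW$-complexes, as throughout the paper, quotients by subcomplexes are cofibrations and homotopy pushouts may be computed as strict pushouts. Let $\pi\colon A\times B\to A\rtimes B$ be the quotient collapsing $\ast\times B$. Since the restriction of $f$ to $\ast\times B\cong B$ is null homotopic, I would first replace $f$ by a homotopic map that is constant on $\ast\times B$; this leaves the homotopy type of $Q$ unchanged, and the new representative factors as $f=f'\circ\pi$, where $f'$ is exactly the map whose homotopy cofibre is $E$.

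Next I would split the pushout using the pasting law for homotopy pushouts, via the diagram
\[\begin{tikzcd}
	{A\times B} & {A\rtimes B} & C \\
	{D\times B} & R & Q
	\arrow["\pi", from=1-1, to=1-2]
	\arrow["{f'}", from=1-2, to=1-3]
	\arrow["{\ast\times 1}"', from=1-1, to=2-1]
	\arrow["g", from=1-2, to=2-2]
	\arrow[from=1-3, to=2-3]
	\arrow[from=2-1, to=2-2]
	\arrow[from=2-2, to=2-3]
\end{tikzcd}\]
Here the left square is declared to be a homotopy pushout with corner $R$, and the right square is the homotopy pushout of $R \xleftarrow{g} A\rtimes B \xrightarrow{f'} C$. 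Since $f=f'\circ\pi$, the outer rectangle is precisely the pushout defining $Q$, so the pasting law identifies the corner of the right square with $Q$. It therefore remains to compute $R$ and the induced map $g$. Because $\pi$ is the cofibration collapsing $\ast\times B$, the pushout $R$ is strict, equal to $(D\times B)\cup_{A\times B}(A\rtimes B)$; as $\ast\times 1$ sends $(a,b)$ to $(\ast,b)$, each class $[a,b]$ of $A\rtimes B$ is glued to $(\ast,b)\in D\times B$, and chasing these relations absorbs $A\rtimes B$ entirely while collapsing $\ast\times B\subseteq D\times B$ to a point. Hence $R\cong (D\times B)/(\ast\times B)=D\rtimes B$ and $g\colon A\rtimes B\to D\rtimes B$ is the constant map.

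I expect this simultaneous identification — that $R\simeq D\rtimes B$ and that $g$ is null — to be the main obstacle, as it is the only place where the specific shape of $\ast\times 1$ and of the half-smash enter. Granting it, $Q$ is the homotopy pushout of $C \xleftarrow{f'} A\rtimes B \xrightarrow{g} D\rtimes B$ with $g$ null homotopic. Factoring $g$ through a point and pasting once more, the pushout of $f'$ along $A\rtimes B\to\ast$ is the homotopy cofibre $E$ of $f'$, and the remaining pushout of $E\leftarrow\ast\to D\rtimes B$ is the wedge of $E$ and $D\rtimes B$ at the basepoint. This yields the desired equivalence $Q\simeq(D\rtimes B)\vee E$.
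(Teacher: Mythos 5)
Your overall architecture matches the paper's: reduce to the span $C \xleftarrow{f'} A\rtimes B \longrightarrow D\rtimes B$ whose top map is null homotopic, then split $Q$ as a wedge by factoring that null map through a point and pasting. Your first step (deforming $f$ to be constant on $\ast\times B$, using the homotopy extension property of the cofibration $\ast\times B \hookrightarrow A\times B$) and your last step are both fine. The problem is the justification of the step you yourself flagged as the crux. Your claim that ``quotients by subcomplexes are cofibrations'' has it backwards: \emph{inclusions} of subcomplexes are cofibrations, whereas the quotient map $\pi\colon A\times B \to A\rtimes B$ is not a cofibration --- it is not even injective. Neither is $\ast\times 1$. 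So neither leg of the span $D\times B \xleftarrow{\ast\times 1} A\times B \xrightarrow{\pi} A\rtimes B$ is a cofibration, and there is no general principle allowing you to compute its homotopy pushout $R$ as the strict pushout; for spans with no cofibration leg the two can genuinely differ, and establishing that they agree here is exactly the content of the step.

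The claim is nevertheless true, and the repair is one more application of the pasting law. Stack your left square underneath the cofibre square of the cofibration $\iota\colon \ast\times B \hookrightarrow A\times B$:
\[\begin{tikzcd}
	{\ast \times B} & {\ast} \\
	{A \times B} & {A \rtimes B} \\
	{D \times B} & {D \rtimes B}
	\arrow[from=1-1, to=1-2]
	\arrow["{\iota}"', from=1-1, to=2-1]
	\arrow[from=1-2, to=2-2]
	\arrow["{\pi}", from=2-1, to=2-2]
	\arrow["{\ast \times 1}"', from=2-1, to=3-1]
	\arrow["{g}", from=2-2, to=3-2]
	\arrow["{q}", from=3-1, to=3-2]
\end{tikzcd}\]
where $q$ is the quotient map and $g$ is the constant map. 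The top square is the cofibre square of the cofibration $\iota$, hence a homotopy pushout. The composite of the left-hand column is the inclusion $\ast\times B \hookrightarrow D\times B$, so the outer rectangle is the cofibre square of that cofibration, hence also a homotopy pushout with corner $D\rtimes B$. By the pasting law the bottom square is therefore a homotopy pushout, which is precisely the statement that $R\simeq D\rtimes B$ with induced map $g$ constant. With this repair your proof is correct, and it is essentially the paper's argument: the paper carries out this same collapse of $B$, but on all four corners of the original square at once (``$B$ can be collapsed out of the diagram''), and then concludes with the identical iterated-pushout splitting that you use.
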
 

\begin{proof} 
Since the restriction of $f$ to $B$ is null homotopic, and the map $\ast\times 1$ is the identity on $B$, 
the space $B$ can be collapsed out of the diagram to give a homotopy pushout 
\[\begin{tikzcd}
	{A \rtimes B} & {D \rtimes B} \\
	C & Q.
	\arrow["{* \rtimes 1}", from=1-1, to=1-2]
	\arrow["f", from=1-1, to=2-1]
	\arrow[from=1-2, to=2-2]
	\arrow[from=2-1, to=2-2]
\end{tikzcd}\]
The map $\ast\rtimes 1$ is null homotopic. Thus the previous homotopy pushout can be expanded 
to a diagram of iterated homotopy pushouts 
\[\begin{tikzcd}
	{A \rtimes B} & {*} & {D \rtimes B} \\
	C & E & Q.
	\arrow[from=1-1, to=1-2]
	\arrow["{f'}", from=1-1, to=2-1]
	\arrow[from=1-2, to=1-3]
	\arrow[from=1-2, to=2-2]
	\arrow[from=1-3, to=2-3]
	\arrow[from=2-1, to=2-2]
	\arrow[from=2-2, to=2-3]
\end{tikzcd}\]
Here, in the left square the homotopy pushout of $f'$ and the constant map is the homotopy cofibre of~$f'$, 
which is $E$, and in the right square, the homotopy pushout can be identified as $Q$ since the outer rectangle 
is also a homotopy pushout. The right square itself now identifies $Q$ as $(D\rtimes B)\vee E$. 
\end{proof} 

\section{The effect on polyhedral products of removing certain maximal faces}
\label{sec:decompPP} 

Let $K$ be a simplicial complex on the vertex set $[m]$ and let $\sigma\in K$ be a maximal face. Let $K\backslash\sigma$ be $K$ with 
the interior of the face $\sigma$ removed. Observe that $\partial\sigma\subseteq K\backslash\sigma$ and there is a pushout of simplicial complexes 
\begin{equation}\label{facepo1}\begin{tikzcd}
    {{\partial \sigma}} & {\sigma } \\
    {K \setminus \sigma} & K.
	\arrow[from=1-1, to=1-2]
	\arrow[from=1-1, to=2-1]
	\arrow[from=1-2, to=2-2]
	\arrow[from=2-1, to=2-2]
\end{tikzcd}\end{equation} 
The inclusion 
\(\namedright{K\setminus\sigma}{}{K}\) 
induces a map of polyhedral products 
\(\namedright{\caa^{K\setminus\sigma}}{}{\caa^{K}}\). 
In this section, conditions are given for when this map has a right homotopy inverse. Moreover, $\caa^{K}$ is shown to be a wedge summand of $\caa^{K\setminus\sigma}$ and the complementary wedge summand is identified. 

Suppose that $\sigma$ has dimension $d\geq 2$ and there exists a face $\tau \in \partial \sigma$ with $|\tau| = |\sigma|-1$ which is maximal in $K \setminus \sigma$. 
Let $L=K\setminus\{\sigma,\tau\}$ and note that $\partial\sigma\cap L\neq\partial\sigma$. Combining Lemma~\ref{lem:Lexists} and~(\ref{facepo1}), there is an iterated pushout of simplicial complexes \begin{equation}\label{facepo}\begin{tikzcd}
	{\partial \sigma\cap L} & {{\partial \sigma}} & {\sigma } \\
	L & {K \setminus \sigma} & K.
	\arrow[from=1-1, to=1-2]
	\arrow[from=1-1, to=2-1]
	\arrow[from=1-2, to=1-3]
	\arrow[from=1-2, to=2-2]
	\arrow[from=1-3, to=2-3]
	\arrow[from=2-1, to=2-2]
	\arrow[from=2-2, to=2-3]
\end{tikzcd}\end{equation} 

As the dimension of $\sigma$ is at least $2$, by Remark~\ref{Lremark}, $L$, $K\setminus\sigma$ and~$K$ all have the same vertex set. If $\sigma\neq K$ then~$\sigma$ has a smaller vertex set than $K$, 
and we regard both $\sigma$ and $\partial\sigma$ as simplicial complexes on the vertex set $[m]$, 
giving ghost vertices which we denote by $1\leq i\leq m$ with $i\notin\sigma$. By~\cite[Proposition~3.1]{GT}, the 
iterated pushout of simplicial complexes in ~(\ref{facepo}) implies that there is an iterated pushout of polyhedral products \begin{equation}\label{GTpo}\begin{tikzcd}
	{\caa^{\partial \sigma \cap L} \times \prod\limits_{i \notin \sigma} A_i} & {\caa^{\partial \sigma} \times \prod\limits_{i \notin \sigma} A_i} & {\caa^{\sigma } \times \prod\limits_{i \notin \sigma} A_i} \\
	{\caa^L} & {\caa^{K \setminus \sigma}} & {\caa^K}
	\arrow["{i' \times 1}", from=1-1, to=1-2]
	\arrow["f", from=1-1, to=2-1]
	\arrow["{i \times 1}", from=1-2, to=1-3]
	\arrow[from=1-2, to=2-2]
	\arrow[from=1-3, to=2-3]
	\arrow[from=2-1, to=2-2]
	\arrow[from=2-2, to=2-3]
\end{tikzcd}\end{equation}
where $i'$ is induced by the inclusion $\partial \sigma \cap L \rightarrow \partial \sigma$, $i$ is induced by the inclusion 
\(\namedright{\partial\sigma}{}{\sigma}\) 
and $f$ is induced by the inclusion $\partial \sigma \cap L \rightarrow \partial \sigma$. We first show that $i'$ is null homotopic.

\begin{lemma}
\label{lem:Lnullhtpc}
    Let $\sigma$ be a maximal face of $K$ of dimension $d \geq 2$. Suppose there exists a face $\tau \in \partial \sigma$ with $|\tau| = |\sigma|-1$ which is maximal in $K \setminus \sigma$ and let $L=K\setminus\{\sigma,\tau\}$.
    Then the map of polyhedral products $\caa^{\partial\sigma\cap L} \stackrel{i'}{\rightarrow} \caa^{\partial \sigma}$ induced by the inclusion $\partial\sigma\cap L \rightarrow \partial \sigma$ is null homotopic.
\end{lemma}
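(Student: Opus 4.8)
The plan is to exploit the cone structure of the source complex. First I would record the combinatorial identification of $\partial\sigma\cap L$. By the proof of Lemma~\ref{lem:Lexists}, $L=K\setminus\{\sigma,\tau\}$ gives $\partial\sigma\cap L=\partial\sigma\setminus\tau$. Writing $v_0$ for the unique vertex of $\sigma$ not lying in the facet $\tau$ (so $\sigma=\{v_0\}\cup\tau$ and $|\tau|=|\sigma|-1$), a direct check on faces shows
\[\partial\sigma\cap L=\partial\sigma\setminus\tau=\{v_0\}\ast\partial\tau,\]
the join of the vertex $v_0$ with $\partial\tau$; equivalently $\partial\sigma\cap L$ is the closed star of $v_0$ in $\partial\sigma$, a cone with apex $v_0$. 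In particular $v_0$ lies in every maximal face of $\partial\sigma\cap L$. The hypothesis $\dim\sigma=d\geq 2$ guarantees $\partial\tau\neq\emptyset$, so this is a genuine cone.

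Next I would use that polyhedral products convert joins into products: since $\partial\sigma\cap L=\{v_0\}\ast\partial\tau$, there is a homeomorphism $\caa^{\partial\sigma\cap L}\cong CA_{v_0}\times\caa^{\partial\tau}$, where the $CA_{v_0}$ factor is the contractible cone on $A_{v_0}$ and $\caa^{\partial\tau}$ is taken on the vertex set $\tau$. Because $v_0$ sits in every maximal face, the $v_0$-coordinate of $\caa^{\partial\sigma\cap L}$, viewed inside the product $\prod_{i\in\sigma}CA_{v_i}$, is entirely unconstrained. The idea is to reduce $i'$ to this cone factor and then contract into an honest face-stratum of $\caa^{\partial\sigma}$.

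Concretely, I would fix a point $a\in A_{v_0}\subseteq CA_{v_0}$ and let $\iota_a\colon\caa^{\partial\tau}\to CA_{v_0}\times\caa^{\partial\tau}$ be $x\mapsto(a,x)$. Since $CA_{v_0}$ is contractible, $\iota_a$ is a homotopy equivalence with homotopy inverse the projection $\pi$, so $\iota_a\circ\pi\simeq\mathrm{id}$ and hence $i'\simeq(i'\circ\iota_a)\circ\pi$. The composite $r=i'\circ\iota_a$ sends $(x_1,\dots,x_d)\mapsto(a,x_1,\dots,x_d)$. Because $a\in A_{v_0}$, the $v_0$-coordinate of $r$ always lies in $A_{v_0}$, so the image of $r$ lies in the subspace $\{a\}\times\prod_{i\in\tau}CA_{v_i}$ of $\caa^{\partial\sigma}$, sitting inside the stratum corresponding to the facet $\tau$ of $\partial\sigma$. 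This subspace is a product of cones, hence contractible. Therefore $r$ factors through a contractible subspace of $\caa^{\partial\sigma}$ and is null homotopic, whence $i'\simeq r\circ\pi$ is null homotopic.

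The point I expect to require the most care is the choice of the slicing point $a$: it must be taken in the subspace $A_{v_0}$ rather than at the cone apex. Fixing $v_0$ at a point of $A_{v_0}$ is exactly what forces the image of $r$ into the contractible facet-stratum of $\caa^{\partial\sigma}$; fixing it at the apex (which lies outside $A_{v_0}$) would keep the image inside $\caa^{\partial\sigma\cap L}\cong CA_{v_0}\times\caa^{\partial\tau}$, which is only homotopy equivalent to $\caa^{\partial\tau}$ and is not contractible in general, so no nullity would follow. Everything else---identifying $\partial\sigma\cap L$ with the cone $\{v_0\}\ast\partial\tau$ and splitting the polyhedral product over this join---is routine, the essential structural input being that $v_0$ is a cone point whose unconstrained cone coordinate $CA_{v_0}$ can absorb the required contraction.
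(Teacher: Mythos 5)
Your proof is correct and follows essentially the same route as the paper: identify $\partial\sigma\cap L = \partial\sigma\setminus\tau$ as the cone $v_0 \ast \partial\tau$, split the polyhedral product as $CA_{v_0}\times\caa^{\partial\tau}$, collapse the contractible cone factor, and observe that the resulting map into $\caa^{\partial\sigma}$ lands in the contractible stratum corresponding to the face $\tau$. The paper phrases this last step simplicially---the induced map factors as $\partial\tau\rightarrow\tau\rightarrow\partial\sigma$ and $\caa^{\tau}$ is contractible---which is exactly your factorization through $\{a\}\times\prod_{i\in\tau}CA_i$, with the basepoint of $A_{v_0}$ playing the role of your slicing point $a$.
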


\begin{proof}
        By definition of $L$, $\partial \sigma \cap L = \partial \sigma \setminus \tau$. Therefore, we show that the map of polyhedral products \[\caa^{\partial \sigma \setminus \tau} \rightarrow \caa^{\partial \sigma}\] induced by $\partial \sigma \setminus \tau \rightarrow \partial \sigma$ is null homotopic. 
    
    Let $v$ be the vertex of $\partial \sigma$ not contained in $\tau$. Then $\partial \sigma \setminus \tau = v * \partial \tau$. By definition of the polyhedral product, there are homotopy equivalences, \[\caa^{\partial \sigma \setminus \tau} \cong \caa^{\partial \tau} \times CA_v \simeq \caa^{\partial \tau},\] and the map $\caa^{\partial \sigma \setminus \tau} \rightarrow \caa^{\partial \sigma}$, up to these homotopy equivalences, becomes the map induced by the inclusion $\partial \tau \rightarrow \partial \sigma$. However, $\tau \in \partial \sigma$, and so this map factors as $\partial \tau \rightarrow \tau \rightarrow \partial \sigma$. By definition, $\caa^{\tau}$ is contractible, and so the map induced by $\partial \tau \rightarrow \partial \sigma$ is null homotopic.
\end{proof}

This allows us to give a decomposition of $\caa^{K \setminus \sigma}$ in terms of $\caa^{K}$.
 
\begin{theorem} 
   \label{faceinert} 
Let $K$ be a simplicial complex and $\sigma$ be a maximal face of $K$. Suppose there exists a face $\tau \in \partial \sigma$ with $|\tau| = |\sigma|-1$ which is maximal in $K \setminus \sigma$. Then the map 
   \(\namedright{\caa^{K\backslash\sigma}}{}{\caa^{K}}\) 
   has a right homotopy inverse and there is a homotopy equivalence 
   \[\caa^{K\backslash\sigma}\simeq\bigg(\caa^{\partial\sigma}\rtimes\prod_{i\notin\sigma} A_{i}\bigg)\vee\caa^{K}.\] 
\end{theorem}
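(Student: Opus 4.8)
The plan is to combine the null-homotopy established in Lemma~\ref{lem:Lnullhtpc} with the general splitting principle of Lemma~\ref{potype}, applied to the left-hand square of the iterated pushout~(\ref{GTpo}). First I would observe that the left square of~(\ref{GTpo}) is a homotopy pushout of exactly the shape required by Lemma~\ref{potype}: set $A=\caa^{\partial\sigma\cap L}$, $D=\caa^{\partial\sigma}$, $B=\prod_{i\notin\sigma}A_i$, $C=\caa^{L}$, and $Q=\caa^{K\setminus\sigma}$. The top map $i'\times 1$ is $(i')\times\mathrm{id}_B$, and by Lemma~\ref{lem:Lnullhtpc} the map $i'$ is null homotopic; hence the restriction of the top map to $B$ factors through $i'$ and so the restriction of $f$ to $B$ is null homotopic, which is precisely the hypothesis of Lemma~\ref{potype}. (One must check that the pushout in~(\ref{GTpo}) really is a homotopy pushout and not merely a strict one; this follows because polyhedral product inclusions of this form are cofibrations, so the strict pushout computes the homotopy pushout.)

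Applying Lemma~\ref{potype} then gives a homotopy equivalence
\[
\caa^{K\setminus\sigma}\;\simeq\;\bigg(\caa^{\partial\sigma}\rtimes\prod_{i\notin\sigma}A_i\bigg)\vee E,
\]
where $E$ is the homotopy cofibre of the quotient map $f'\colon A\rtimes B\to C=\caa^{L}$. So the remaining task is to identify $E$ with $\caa^{K}$ and to verify the right-homotopy-inverse claim. For the identification of $E$, I would use the iterated-pushout structure: the full rectangle~(\ref{GTpo}) exhibits $\caa^{K}$ as the homotopy pushout of the right-hand column over $\caa^{K\setminus\sigma}$, while the left square (after collapsing $B$ via the null homotopy as in the proof of Lemma~\ref{potype}) identifies the cofibre $E$ appearing in $\caa^{K\setminus\sigma}$. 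Concretely, in the proof of Lemma~\ref{potype} the map $\ast\rtimes 1$ on the top is null homotopic, and the resulting three-by-two diagram of iterated homotopy pushouts places $E$ in the bottom-middle position; comparing this with the right square of~(\ref{GTpo}), where the top map $i\times 1$ has contractible target $\caa^{\sigma}\times\prod A_i$, should identify $E$ with $\caa^{K}$.

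For the right homotopy inverse, I would argue that the wedge decomposition itself furnishes the section: the inclusion $\caa^{K\setminus\sigma}\to\caa^{K}$ corresponds, under the equivalence above, to the projection collapsing the $\caa^{\partial\sigma}\rtimes\prod_{i\notin\sigma}A_i$ summand onto $\caa^{K}\simeq E$, so the inclusion of the $\caa^{K}$ wedge summand is a right homotopy inverse. More carefully, one tracks the map $\caa^{K\setminus\sigma}\to\caa^{K}$ through the iterated pushout and checks it is (up to homotopy) the map $(D\rtimes B)\vee E\to E$ that is trivial on the first factor and the identity on the second; the wedge inclusion of $E$ is then visibly a section.

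The main obstacle I anticipate is the clean identification of the cofibre $E$ with $\caa^{K}$ and the simultaneous verification that the comparison map is the inclusion $\caa^{K\setminus\sigma}\to\caa^{K}$ of the statement. The splitting of Lemma~\ref{potype} is purely formal once its hypothesis holds, so the delicate bookkeeping lies entirely in matching up the two homotopy-pushout analyses of the rectangle~(\ref{GTpo}): one must confirm that the $E$ produced by collapsing $B$ in the left square is genuinely the same space that the outer rectangle computes as $\caa^{K}$, and that the naturally occurring maps agree up to homotopy. The ghost-vertex convention (needed so that $\sigma$ and $\partial\sigma$ are regarded as complexes on $[m]$) and the contractibility of $\caa^{\sigma}$ are the technical ingredients that make the right square collapse correctly, and these must be handled with care.
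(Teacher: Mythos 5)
Your overall strategy coincides with the paper's: apply Lemma~\ref{potype} to the left square of~(\ref{GTpo}), identify the resulting cofibre $E$ with $\caa^{K}$ by comparing cofibres across the iterated pushout, and extract the right homotopy inverse from the wedge decomposition. However, there is a genuine gap at the first step, where you verify the hypothesis of Lemma~\ref{potype}. You claim that because $i'$ is null homotopic (Lemma~\ref{lem:Lnullhtpc}), ``the restriction of $f$ to $B$ is null homotopic.'' This is a non sequitur: $i'$ is the top horizontal map of the square, while $f$ is the left vertical map $\caa^{\partial\sigma\cap L}\times\prod_{i\notin\sigma}A_i\to\caa^{L}$, and no property of $i'$ constrains the restriction of $f$ to $\prod_{i\notin\sigma}A_i$. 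The two null homotopies play distinct roles in the paper's proof: Lemma~\ref{lem:Lnullhtpc} is what allows the top map $i'\times 1$ to be replaced, up to homotopy, by the map $\ast\times 1$ appearing in the statement of Lemma~\ref{potype}; the condition that $f$ restricted to $\prod_{i\notin\sigma}A_i$ is null homotopic is a separate geometric input, which the paper obtains from \cite[Proposition~3.4]{GT} using the fact that $L=K\setminus\{\sigma,\tau\}$ has no ghost vertices (Remark~\ref{Lremark}, which is where the hypothesis $\dim\sigma\geq 2$ enters). Without this ingredient your application of Lemma~\ref{potype} is unjustified.

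The same missing fact is needed again, and more critically, in your identification of $E$ with $\caa^{K}$: to collapse the factor $\prod_{i\notin\sigma}A_i$ out of the middle and right columns of~(\ref{GTpo}) (producing the diagram in which all three vertical maps have a common cofibre), one must know that each vertical map restricts trivially to $\prod_{i\notin\sigma}A_i$, and this is deduced by chasing the commuting squares from the null homotopy of the restriction of $f$, not from the null homotopy of $i'$. So the error propagates through the rest of your outline. The remainder of your argument (the cofibre comparison and the reading off of the section from the splitting) matches the paper's proof and is sound in outline, so the repair is local: invoke Remark~\ref{Lremark} to see that $L$ has no ghost vertices and then \cite[Proposition~3.4]{GT} to conclude that the restriction of $f$ to $\prod_{i\notin\sigma}A_i$ is null homotopic.
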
 

\begin{proof} 
From the left square of ~\eqref{GTpo}, there 
is a pushout of polyhedral products 
\[\begin{tikzcd}\label{Lpo}
	{{\caa^{\partial \sigma \cap L} \times \prod\limits_{i\notin\sigma} A_i}} & {\caa^{\partial\sigma} \times \prod\limits_{i\notin\sigma} A_i} \\
	{\caa^{L}} & {\caa^{K\setminus \sigma}.}
	\arrow["{i' \times 1}", from=1-1, to=1-2]
	\arrow["f", from=1-1, to=2-1]
	\arrow[from=1-2, to=2-2]
	\arrow[from=2-1, to=2-2]
\end{tikzcd}\]
Since $\partial \sigma \cap L \neq \partial \sigma$, Lemma ~\ref{lem:Lnullhtpc} implies that $i'$ is null homotopic. Since $L$ and $K\setminus\sigma$ have the same vertex set, $L$ has no ghost vertices. Therefore, by~\cite[Proposition~3.4]{GT}, the restriction of $f$ to 
$\prod_{i\notin\sigma} A_{i}$ is null homotopic. Thus Lemma~\ref{potype} implies 
that there is a homotopy equivalence 
\begin{equation} 
  \label{Eequiv} 
  \caa^{K\backslash\sigma}\simeq\bigg(\caa^{\partial\sigma}\rtimes\prod_{j\notin\sigma} A_{j}\bigg)\vee E 
\end{equation}  
where $E$ is the homotopy cofibre of 
\(f'\colon\namedright{\caa^{\partial\sigma\cap L}\rtimes\prod_{i\notin\sigma} A_{i}}{}{\caa^{L}}\). 

The next step is to identify $E$. By ~\eqref{GTpo}, there is an iterated diagram of pushouts of polyhedral products 
\[\begin{tikzcd}
	{\caa^{\partial \sigma \cap L} \times \prod\limits_{i \notin \sigma} A_i} & {\caa^{\partial \sigma} \times \prod\limits_{i \notin \sigma} A_i} & {\caa^{\sigma } \times \prod\limits_{i \notin \sigma} A_i} \\
	{\caa^L} & {\caa^{K \setminus \sigma}} & {\caa^K.}
	\arrow["{i' \times 1}", from=1-1, to=1-2]
	\arrow["f", from=1-1, to=2-1]
	\arrow["{i \times 1}", from=1-2, to=1-3]
	\arrow[from=1-2, to=2-2]
	\arrow[from=1-3, to=2-3]
	\arrow[from=2-1, to=2-2]
	\arrow[from=2-2, to=2-3]
\end{tikzcd}\] Since $\caa^{\sigma}$ is contractible, this diagram of iterated pushouts is equivalent up to homotopy 
to the iterated diagram of homotopy pushouts 
\[\begin{tikzcd}
	{\caa^{\partial \sigma \cap L} \times \prod\limits_{i \notin \sigma} A_i} & {\caa^{\partial \sigma} \times \prod\limits_{i \notin \sigma} A_i} & {\prod\limits_{i \notin \sigma} A_i} \\
	{\caa^L} & {\caa^{K \setminus \sigma}} & {\caa^K}
	\arrow["{{i' \times 1}}", from=1-1, to=1-2]
	\arrow["f", from=1-1, to=2-1]
	\arrow["{{\pi_2}}", from=1-2, to=1-3]
	\arrow[from=1-2, to=2-2]
	\arrow[from=1-3, to=2-3]
	\arrow[from=2-1, to=2-2]
	\arrow[from=2-2, to=2-3]
\end{tikzcd}\]
where $\pi_{2}$ is the projection. As noted above, the restriction of $f$ to $\prod_{i\notin\sigma} A_{i}$ 
is null homotopic, so all the vertical maps restrict trivially to $\prod_{i\notin\sigma} A_{i}$, implying that this factor may be collapsed out to give an iterated diagram of homotopy pushouts 
\begin{equation}\label{collapseA} 
\begin{tikzcd}
	{\caa^{\partial \sigma \cap L} \rtimes \prod\limits_{i \notin \sigma} A_i} & {\caa^{\partial \sigma} \rtimes \prod\limits_{i \notin \sigma} A_i} & {*} \\
	{\caa^L} & {\caa^{K \setminus \sigma}} & {\caa^K}
	\arrow["{{i' \rtimes 1}}", from=1-1, to=1-2]
	\arrow["{f'}", from=1-1, to=2-1]
	\arrow[from=1-2, to=1-3]
	\arrow[from=1-2, to=2-2]
	\arrow[from=1-3, to=2-3]
	\arrow[from=2-1, to=2-2]
	\arrow[from=2-2, to=2-3]
\end{tikzcd} 
\end{equation}
In particular, all three vertical maps have the same homotopy cofibre. By definition, the homotopy 
cofibre of $f'$ is $E$, while the right vertical map clearly has $\caa^{K}$ as its homotopy cofibre. 
Thus $E\simeq\caa^{K}$ and therefore from~(\ref{Eequiv}) there is a homotopy equivalence 
\[\caa^{K\backslash\sigma}\simeq 
     \bigg(\caa^{\partial\sigma}\rtimes\prod_{j\notin\sigma} A_{j}\bigg)\vee\caa^{K}.\] 
Further, the right homotopy inverse for 
\(\namedright{\caa^{K \setminus \sigma}}{}{E}\), 
together with the bottom row of~(\ref{collapseA}), gives a composite 
\(\nameddright{E}{}{\caa^{K\backslash\sigma}}{}{\caa^{K}}\) 
that is a homotopy equivalence. Thus the map 
\(\namedright{\caa^{K\backslash\sigma}}{}{\caa^{K}}\) 
has a right homotopy inverse. 
\end{proof} 

\section{Polyhedral products associated to pseudomanifolds with boundary}
\label{sec:pseudowithbound}

In order to study polyhedral products associated to pseudomanifolds, we first consider the case with non-trivial boundary. The results will be used frequently in subsequent sections when the boundary is empty. For a simplicial complex $K$, and an integer $t \geq 0$, let $K^t$ be the $t$-skeleton of $K$. If $K$ has dimension $n$, we apply the results from the previous section in order to show that $\caa^K$ retracts off $\caa^{K^{n-1}}$ under certain hypotheses. 

\begin{theorem}
    \label{thm:maniwithboundretskel}
    Let $K$ be an $n$-dimensional, pure, weak pseudomanifold with boundary having $\ell$ maximal faces $\sigma_1,\cdots,\sigma_\ell$. Suppose that each connected component of $D(K)$ contains a vertex of degree strictly less than $n+1$. Then there is a homotopy equivalence \[\caa^{K^{n-1}} \simeq \bigvee\limits_{i=1}^\ell \left(\caa^{\partial \sigma_{i}} \rtimes \prod\limits_{j \notin \sigma_{i}}A_j\right) \vee \caa^K\] and the map of polyhedral products $\caa^{K^{n-1}} \rightarrow \caa^{K}$ induced by the inclusion $K^{n-1} \rightarrow K$ has a right homotopy inverse.
\end{theorem}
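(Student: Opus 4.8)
The strategy is to peel off maximal faces one at a time, each time invoking Theorem~\ref{faceinert} to split off a wedge summand, and to organize the order of removal using the graph-theoretic Lemma~\ref{lem:vertexremoval}. The key observation is that Theorem~\ref{faceinert} applies to a maximal face $\sigma$ precisely when there is a codimension-one face $\tau\in\partial\sigma$ that is maximal in the complement $K\setminus\sigma$, i.e.\ when $\tau$ is contained in no other maximal face. Translating to the dual graph $D(K)$: a vertex $\sigma_i$ (corresponding to a maximal face) has a codimension-one face not shared with any other maximal face exactly when $\mathrm{deg}_{D(K)}(\sigma_i)<n+1$, since an $n$-simplex has $n+1$ codimension-one faces and each shared face contributes one edge. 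Thus the degree condition in the hypothesis is exactly the combinatorial shadow of the applicability of Theorem~\ref{faceinert}.

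First I would set up the induction. Since $K$ is a pure weak pseudomanifold with boundary, every codimension-one face lies in one or two maximal faces, so $\mathrm{deg}_{D(K)}(\sigma)\le n+1$ for every maximal face $\sigma$. The hypothesis that each component of $D(K)$ has a vertex of degree $<n+1$ lets me apply Lemma~\ref{lem:vertexremoval} (with the value $n+1$ in place of the lemma's $n$) to each component, producing an ordering $\sigma_1,\dots,\sigma_\ell$ of the maximal faces such that each $\sigma_i$ has degree $<n+1$ in the graph obtained after deleting $\sigma_1,\dots,\sigma_{i-1}$. Geometrically, deleting a vertex of $D(K)$ corresponds to removing the interior of that maximal face from the complex. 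So at the $i$-th stage, working in $K_i := K\setminus\{\sigma_1,\dots,\sigma_{i-1}\}$ (interiors removed), the face $\sigma_i$ is maximal and has a codimension-one subface $\tau_i$ maximal in $K_i\setminus\sigma_i$, which is exactly the hypothesis of Theorem~\ref{faceinert}.

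Next I would run the induction. Applying Theorem~\ref{faceinert} at each stage gives a homotopy equivalence
\[
\caa^{K_{i+1}}\simeq\Bigl(\caa^{\partial\sigma_i}\rtimes\prod_{j\notin\sigma_i}A_j\Bigr)\vee\caa^{K_i},
\]
together with a right homotopy inverse of $\caa^{K_{i+1}}\to\caa^{K_i}$. Starting from $K_1=K^{n-1}$ (removing the interiors of all $\ell$ top faces yields the $(n-1)$-skeleton, since $K$ is pure) and ending at $K_{\ell+1}=K$, composing these equivalences telescopes the wedge summands to give the asserted decomposition
\[
\caa^{K^{n-1}}\simeq\bigvee_{i=1}^{\ell}\Bigl(\caa^{\partial\sigma_i}\rtimes\prod_{j\notin\sigma_i}A_j\Bigr)\vee\caa^{K}.
\]
The right homotopy inverse of $\caa^{K^{n-1}}\to\caa^{K}$ is the composite of the right homotopy inverses at each stage, since the inclusion $K^{n-1}\to K$ factors through the tower $K_1\subset\cdots$ up to the interior-removal identifications.

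The main obstacle I anticipate is verifying that Theorem~\ref{faceinert} genuinely applies at \emph{every} stage, not just the first. The subtlety is twofold: I must confirm that $\sigma_i$ remains a maximal face of $K_i$ and that the degree drop recorded by Lemma~\ref{lem:vertexremoval} really corresponds to the existence of the required maximal $\tau_i$ in $K_i\setminus\sigma_i$. This requires checking that removing interiors of higher faces does not accidentally create subfaces that obstruct maximality, and that the dimension condition $d\ge 2$ (or the appropriate low-dimensional bookkeeping) underlying Lemma~\ref{lem:Lnullhtpc} is respected throughout; one also needs the ghost-vertex/vertex-set consistency from Remark~\ref{Lremark} to hold at each stage so that the wedge summands are correctly indexed by $\prod_{j\notin\sigma_i}A_j$.
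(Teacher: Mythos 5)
Your proposal is correct and follows essentially the same route as the paper's proof: order the maximal faces by applying Lemma~\ref{lem:vertexremoval} (with threshold $n+1$) to each component of $D(K)$, peel them off one at a time, apply Theorem~\ref{faceinert} at each stage --- the degree bound supplying the codimension-one face $\tau_i$ that is maximal in the complement --- and then telescope the wedge decompositions and compose the right homotopy inverses. The only blemish is a harmless index slip: with your definition $K_i = K\setminus\{\sigma_1,\dots,\sigma_{i-1}\}$, the tower starts at $K_1 = K$ and ends at $K_{\ell+1} = K^{n-1}$, not the other way around as one of your sentences states.
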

\begin{proof}
    Applying Lemma ~\ref{lem:vertexremoval} to each connected component of $D(K)$ and relabelling the maximal faces if necessary, we can assume $\sigma_1$ has degree strictly less than $n+1$ in $D(K)$, and for $2 \leq i \leq \ell$, $\sigma_i$ has degree strictly less than $n+1$ in $D(K) \setminus \{\sigma_1,\cdots,\sigma_{i-1}\}$. Define $K_0 = K$, and for $1 \leq i \leq \ell$, define $K_i = K\setminus \{\sigma_1,\cdots,\sigma_i\}$. Observe that by definition, $K_\ell = K^{n-1}$. There is a sequence of inclusions \[K^{n-1} = K_{\ell} \rightarrow K_{\ell-1} \rightarrow \cdots \rightarrow K_1 \rightarrow K_0 = K,\] which factors the inclusion of $K^{n-1}$ into $K$.

    We show that for each $i$, the map of polyhedral products $\caa^{K_i} \rightarrow \caa^{K_{i-1}}$ induced by the inclusion $K_i \rightarrow K_{i-1}$ has a right homotopy inverse. Since $\sigma_i$ has degree strictly less than $n+1$ in $D(K) \setminus \{\sigma_1,\cdots,\sigma_{i-1}\}$, there exists a face $\tau \in \partial \sigma_i$ with $|\tau| = |\sigma_i|-1$ which is contained in only one maximal face, namely $\sigma_i$. In particular, $\tau$ is maximal in $K_{i} = K_{i-1} \setminus \sigma_i$. Hence, Theorem ~\ref{faceinert} implies there is a homotopy equivalence \[\caa^{K_{i}} \simeq \left(\caa^{\partial \sigma_i}\rtimes \prod\limits_{j \notin \sigma_i} A_j\right) \vee \caa^{K_{i-1}},\] and the map of polyhedral products $\caa^{K_{i}} \rightarrow \caa^{K_{i-1}}$ induced by the inclusion $K_{i} \rightarrow K_{i-1}$ has a right homotopy inverse. The homotopy equivalence asserted by the theorem then follows by induction.
\end{proof}

Theorem ~\ref{thm:maniwithboundretskel} has a homological consequence that will be important in Section ~\ref{sec:loopMaM}.

\begin{proposition}
\label{prop:torfreeret}
    Let $K$ be an $n$-dimensional, pure, weak pseudomanifold with boundary. Suppose that each connected component of $D(K)$ contains a vertex of degree strictly less than $n+1$. Let $A_1,\cdots,A_m$ be spaces such that $H_*(A_i)$ is torsion free for all $i$. Then $H_*(\caa^K)$ is torsion free if and only if $H_*(\caa^{K^{n-1}})$ is torsion free. 
\end{proposition}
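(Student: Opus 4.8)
The plan is to read off everything from the wedge decomposition supplied by Theorem~\ref{thm:maniwithboundretskel}. Its hypotheses are exactly those assumed here, so it gives a homotopy equivalence
\[\caa^{K^{n-1}}\simeq\bigvee_{i=1}^{\ell}\left(\caa^{\partial\sigma_i}\rtimes\prod_{j\notin\sigma_i}A_j\right)\vee\caa^K.\]
Write $W$ for the wedge of half-smash summands. Since the reduced homology of a wedge is the direct sum of the reduced homologies of its pieces, $\widetilde{H}_*(\caa^{K^{n-1}})\cong\widetilde{H}_*(W)\oplus\widetilde{H}_*(\caa^K)$. All spaces in sight are of finite type, so torsion-free is the same as free, and a finite direct sum of abelian groups is torsion-free precisely when each summand is. Hence the whole equivalence reduces to a single claim: that $\widetilde{H}_*(W)$ is torsion-free \emph{regardless} of $K$. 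Granting this, $\widetilde{H}_*(\caa^{K^{n-1}})$ is torsion-free if and only if $\widetilde{H}_*(\caa^K)$ is, which is the assertion.

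The crux is therefore to show each summand $\caa^{\partial\sigma_i}\rtimes\prod_{j\notin\sigma_i}A_j$ has torsion-free homology, and I will treat the two factors separately. First, because $K$ is pure of dimension $n$ the face $\sigma_i$ is an $n$-simplex, so $\partial\sigma_i$ is the boundary of a simplex on the $n+1$ vertices of $\sigma_i$; the standard identification of the polyhedral product over the boundary of a simplex with a join gives $\caa^{\partial\sigma_i}\simeq A_{j_1}*\cdots*A_{j_{n+1}}$ where $\sigma_i=\{j_1,\dots,j_{n+1}\}$. Iterating $X*Y\simeq\Sigma(X\wedge Y)$ presents this as $\Sigma^{n}\!\bigl(A_{j_1}\wedge\cdots\wedge A_{j_{n+1}}\bigr)$, whose reduced homology is, up to a degree shift, the tensor product $\widetilde{H}_*(A_{j_1})\otimes\cdots\otimes\widetilde{H}_*(A_{j_{n+1}})$. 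As each $\widetilde{H}_*(A_j)$ is torsion-free (hence free), no $\mathrm{Tor}$ terms occur in the Künneth formula and the tensor product is free; thus $\widetilde{H}_*(\caa^{\partial\sigma_i})$ is torsion-free. The same Künneth reasoning shows $H_*\bigl(\prod_{j\notin\sigma_i}A_j\bigr)$ is torsion-free.

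It remains to see that a right half-smash $X\rtimes Y$ of two spaces with torsion-free homology has torsion-free homology. Here I will use the stable splitting $\Sigma(X\rtimes Y)\simeq\Sigma X\vee\Sigma(X\wedge Y)$ (equivalently, the cofibration $Y=\ast\times Y\hookrightarrow X\times Y\to X\rtimes Y$, split by the projection to $Y$) to get $\widetilde{H}_*(X\rtimes Y)\cong\widetilde{H}_*(X)\oplus\widetilde{H}_*(X\wedge Y)$. With $\widetilde{H}_*(Y)$ torsion-free the $\mathrm{Tor}$ term in the Künneth formula for $X\wedge Y$ vanishes, so $\widetilde{H}_*(X\wedge Y)\cong\widetilde{H}_*(X)\otimes\widetilde{H}_*(Y)$ is torsion-free, and therefore so is $\widetilde{H}_*(X\rtimes Y)$. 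Applying this with $X=\caa^{\partial\sigma_i}$ and $Y=\prod_{j\notin\sigma_i}A_j$ makes every summand of $W$ torsion-free, so $\widetilde{H}_*(W)$ is torsion-free and the proof concludes.

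I do not expect any single deep obstacle; the work is to keep the Künneth $\mathrm{Tor}$ contributions under control as they pass through the join description of $\caa^{\partial\sigma_i}$, the smash, and the half-smash. The torsion-freeness hypothesis on each $A_i$ is precisely what forces all of these $\mathrm{Tor}$ terms to vanish, and recognising $\caa^{\partial\sigma_i}$ as a join (so that it is automatically torsion-free, independently of $\caa^K$) is the step that makes the ``if and only if'' collapse to the trivial observation about direct summands.
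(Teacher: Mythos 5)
Your proof is correct and takes essentially the same approach as the paper's: both use the wedge decomposition of Theorem~\ref{thm:maniwithboundretskel} to reduce the statement to showing each summand $\caa^{\partial\sigma_i}\rtimes\prod_{j\notin\sigma_i}A_j$ has torsion-free homology, identify $\caa^{\partial\sigma_i}$ as a suspension of a smash of the $A_j$'s, and conclude with the K\"unneth theorem. The only cosmetic difference is that you split the half-smash stably via $\Sigma(X\rtimes Y)\simeq\Sigma X\vee\Sigma(X\wedge Y)$, whereas the paper uses the unstable splitting $A\rtimes B\simeq A\vee(A\wedge B)$, valid here because $\caa^{\partial\sigma_i}$ is a suspension.
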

\begin{proof}
    If $H_*(\caa^{K^{n-1}})$ is torsion free then, by Theorem ~\ref{thm:maniwithboundretskel}, $\caa^{K}$ retracts off $\caa^{K^{n-1}}$, implying that $H_*(\caa^{K})$ is torsion free.

    Now suppose that $H_*(\caa^{K})$ is torsion free. By Theorem ~\ref{thm:maniwithboundretskel}, there is a homotopy equivalence \[\caa^{K^{n-1}} \simeq \bigvee\limits_{i=1}^\ell \left(\caa^{\partial \sigma_{i}} \rtimes \prod\limits_{j \notin \sigma_{i}}A_j\right) \vee \caa^K,\] where $\sigma_1,\ldots,\sigma_{\ell}$ are maximal faces of $K$. By assumption, $H_{\ast}(\caa^{K})$ is torsion free, so to show that $H_{\ast}(\caa^{K^{n-1}})$ is torsion free it suffices to show that $H_*(\caa^{\partial \sigma_{i}} \rtimes \prod_{j \notin \sigma_{\ell}}A_j)$ is torsion free for $1 \leq i \leq \ell$. Let $\sigma_i = \{j_1,\cdots,j_n\}$. By \cite[Theorem 1.7]{IK3} or \cite[Theorem 1.1]{GT}, there is a homotopy equivalence \[\caa^{\partial \sigma_{i}} \simeq\Sigma^{n-1} A_{j_1}\wedge \cdots\wedge A_{j_n}.\] In particular, $\caa^{\partial \sigma_{i}}$ is a suspension. In general, if $A$ is a suspension then there is a homotopy equivalence $A\rtimes B\simeq A\vee (A\wedge B)$, so in our case there is a homotopy equivalence \[\caa^{\partial \sigma_{i}} \rtimes \prod\limits_{j \notin \sigma_{i}}A_j \simeq \caa^{\partial \sigma_{i}} \vee (\caa^{\partial \sigma_{i}} \wedge \prod_{j \notin \sigma_{i}}A_j).\] 
    By hypothesis, each $H_{\ast}(A_{i})$ is torsion free, so the reduced K\"unneth theorem implies that both $H_{\ast}(\caa^{\partial \sigma_{i}})$ and $H_{\ast}(\caa^{\partial \sigma_{i}} \wedge \prod\limits_{j \notin \sigma_{i}}A_j)$ are torsion free, and hence $H_{\ast}(\caa^{\partial \sigma_{i}} \rtimes \prod\limits_{j \notin \sigma_{i}}A_j)$ is torsion free.
\end{proof} 

Theorem ~\ref{thm:maniwithboundretskel} can also be used to give coarse decompositions of the loop spaces of polyhedral products associated to pseudomanifolds with boundary in low dimensions.

\begin{theorem}
    \label{thm:pseudowithbound2and3dim}
    Let $K$ be a pure, weak pseudomanifold with boundary of dimension $n$ on $[m]$, and let $A_1,\cdots,A_m$ be spaces such that $\Sigma A_i \in \mathcal{W}$ for all $i$. Suppose that each connected component of $D(K)$ contains a vertex of degree strictly less than $n+1$. If $n=1$, $n=2$, or $n=3$ and $H_*(|L|)$ is torsion free for all subcomplexes $L$ of $K$ with complete $1$-skeleton, then $\Omega \caa^K \in \mathcal{P}$.
\end{theorem}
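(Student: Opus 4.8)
The plan is to reduce each case to a lower-dimensional skeleton, where the earlier results apply, and then transport the conclusion back up using that $\mathcal{P}$ is closed under retracts. The key input is Theorem~\ref{thm:maniwithboundretskel}: under the stated hypotheses on $D(K)$, the map $\caa^{K^{n-1}} \to \caa^{K}$ induced by the inclusion $K^{n-1}\to K$ has a right homotopy inverse, so $\caa^{K}$ retracts off $\caa^{K^{n-1}}$. Looping, $\Omega\caa^{K}$ retracts off $\Omega\caa^{K^{n-1}}$. Hence by Theorem~\ref{thm:Pclosedunderret} it suffices in each case to show that $\Omega\caa^{K^{n-1}} \in \mathcal{P}$, and since $K^{n-1}$ is one dimension lower than $K$, this is exactly the regime covered by Theorems~\ref{thm:graphinP} and~\ref{thm:2diminP}.

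I would dispatch the three cases as follows. For $n=1$, no reduction is needed: $K$ is itself $1$-dimensional and $\Sigma A_i \in \mathcal{W}$ for all $i$, so Theorem~\ref{thm:graphinP} gives $\Omega\caa^{K} \in \mathcal{P}$ directly. For $n=2$, the skeleton $K^{1}$ is $1$-dimensional, so Theorem~\ref{thm:graphinP} gives $\Omega\caa^{K^{1}} \in \mathcal{P}$; since $\Omega\caa^{K}$ retracts off $\Omega\caa^{K^{1}}$, Theorem~\ref{thm:Pclosedunderret} yields $\Omega\caa^{K} \in \mathcal{P}$. For $n=3$, the skeleton $K^{2}$ is $2$-dimensional, and I would apply Theorem~\ref{thm:2diminP} to it, after which the same retraction argument and closure under retracts give the result.

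The one step requiring genuine verification, and the place I expect the main obstacle to lie, is checking the hypothesis of Theorem~\ref{thm:2diminP} in the $n=3$ case, namely that $H_*(|L|)$ is torsion free for every full subcomplex $L$ of $K^{2}$ with complete $1$-skeleton. Here I would use that $K^{2} \subseteq K$, so any full subcomplex $L$ of $K^{2}$ is in particular a subcomplex of $K$, and if it has complete $1$-skeleton then it is a subcomplex of $K$ with complete $1$-skeleton. The torsion-free assumption imposed on $K$ (stated for all subcomplexes of $K$ with complete $1$-skeleton, not merely full ones) therefore applies to $L$ verbatim, giving $H_*(|L|)$ torsion free. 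With this hypothesis transfer in hand, Theorem~\ref{thm:2diminP} gives $\Omega\caa^{K^{2}} \in \mathcal{P}$, and Theorem~\ref{thm:Pclosedunderret} completes the argument. The remaining details are routine bookkeeping: confirming that the hypotheses on dimension, purity, weak-pseudomanifold structure, and the degree condition on each component of $D(K)$ are precisely those needed to invoke Theorem~\ref{thm:maniwithboundretskel}, all of which are carried in the statement.
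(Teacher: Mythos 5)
Your proposal is correct and follows essentially the same route as the paper: reduce to the $(n-1)$-skeleton via Theorem~\ref{thm:maniwithboundretskel}, use closure of $\mathcal{P}$ under retracts (Theorem~\ref{thm:Pclosedunderret}), and then invoke Theorem~\ref{thm:graphinP} for $n=1,2$ and Theorem~\ref{thm:2diminP} for $n=3$. Your explicit verification that full subcomplexes of $K^{2}$ with complete $1$-skeleton are covered by the hypothesis on subcomplexes of $K$ is a detail the paper leaves implicit, and it is handled correctly.
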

\begin{proof}
    If $n=1$, then Theorem ~\ref{thm:graphinP} implies $\Omega \caa^K \in \mathcal{P}$, so assume $n \geq 2$. By Theorem ~\ref{thm:maniwithboundretskel}, $\caa^K$ retracts off $\caa^{K^{n-1}}$, and so $\Omega \caa^K$ retracts off $\Omega \caa^{K^{n-1}}$. By Theorem ~\ref{thm:Pclosedunderret}, to show $\Omega \caa^K \in \mathcal{P}$ it suffices to show that $\Omega \caa^{K^{n-1}} \in \mathcal{P}$. But Theorem ~\ref{thm:graphinP} when $n=2$ and Theorem ~\ref{thm:2diminP} when $n=3$ imply that $\Omega \caa^{K^{n-1}} \in \mathcal{P}$.
\end{proof}

\section{Polyhedral products associated to pseudomanifolds}
\label{sec:pseudo}

The results from the previous section are applied to certain classes of pseudomanifolds. In particular, we show that loop spaces of certain polyhedral products associated to surfaces are in $\mathcal{P}$. We start with a general statement giving conditions for when a polyhedral product has its loop space in $\mathcal{P}$. 

\begin{theorem}
\label{thm:restinPimplyP}
    Let $K$ be a simplicial complex on $[m]$ that does not have a complete $1$-skeleton. Let $A_1,\cdots,A_m$ be spaces such that $\Sigma A_i \in \mathcal{W}$ for all $i\in [m]$. If $\Omega \caa^{K \setminus i} \in \mathcal{P}$ for all $i \in [m]$ then $\Omega \caa^K \in \mathcal{P}$.
\end{theorem}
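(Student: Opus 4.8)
The plan is to realise $K$ as a pushout of two vertex deletions and then invoke Theorem~\ref{thm:pushoutofPisinP}, which asserts precisely that $\mathcal{P}$ is closed under such pushouts when the intersection is a proper full subcomplex (or empty). The hypothesis that $K$ has no complete $1$-skeleton is exactly what makes this decomposition available: it supplies a pair of non-adjacent vertices, and deleting these two vertices separately covers all of $K$. Although the statement gives $\Omega\caa^{K\setminus i}\in\mathcal{P}$ for \emph{all} $i\in[m]$, I expect to use it only for the two endpoints of a single missing edge.

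Concretely, first I would use the incomplete $1$-skeleton to choose vertices $i,j\in[m]$ with $\{i,j\}\notin K$. Writing $K\setminus i$ and $K\setminus j$ for the full subcomplexes of $K$ on $[m]\setminus\{i\}$ and $[m]\setminus\{j\}$, the key combinatorial observation is that no face of $K$ can contain both $i$ and $j$, since such a face would have $\{i,j\}$ as a subface. Hence every face of $K$ omits $i$ or omits $j$, giving $K=(K\setminus i)\cup(K\setminus j)$, while clearly $(K\setminus i)\cap(K\setminus j)=K\setminus\{i,j\}$, the full subcomplex on $[m]\setminus\{i,j\}$. This yields a pushout of simplicial complexes
\[\begin{tikzcd}
    {K\setminus\{i,j\}} & {K\setminus i} \\
    {K\setminus j} & K.
	\arrow[from=1-1, to=1-2]
	\arrow[from=1-1, to=2-1]
	\arrow[from=1-2, to=2-2]
	\arrow[from=2-1, to=2-2]
\end{tikzcd}\]

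Next I would verify the hypotheses of Theorem~\ref{thm:pushoutofPisinP} with $L=K\setminus\{i,j\}$, $K_1=K\setminus i$ and $K_2=K\setminus j$. Each of these is a full subcomplex of $K$, and a full subcomplex of a full subcomplex is again full, so $L$ is a full subcomplex of both $K_1$ and $K_2$; it is proper because $j$ is a vertex of $K_1$ but not of $L$ (and symmetrically $i$ is a vertex of $K_2$ but not of $L$), unless $L=\emptyset$, which the theorem also permits. Since $\Sigma A_k\in\mathcal{W}$ for all $k$ and, by assumption, $\Omega\caa^{K\setminus i}\in\mathcal{P}$ and $\Omega\caa^{K\setminus j}\in\mathcal{P}$, Theorem~\ref{thm:pushoutofPisinP} immediately gives $\Omega\caa^{K}\in\mathcal{P}$.

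This is a direct application rather than a calculation, so I do not anticipate a serious obstacle. The only points requiring care are combinatorial bookkeeping: confirming that the union of the two deletions really does recover all of $K$ (which is exactly where the missing edge is used) and that the intersection is a proper full subcomplex so that Theorem~\ref{thm:pushoutofPisinP} applies, together with the mild vertex-set caveat that the deleted complexes should be read consistently when one passes from the pushout of simplicial complexes to the associated polyhedral products. All of the homotopy-theoretic content is packaged inside Theorem~\ref{thm:pushoutofPisinP}.
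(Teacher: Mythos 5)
Your proof is correct, but it is not the paper's proof; it reaches the same key theorem by a different decomposition. The paper also reduces to Theorem~\ref{thm:pushoutofPisinP}, but via the star--deletion pushout of \cite[Lemma~4.4]{St1}: it chooses a vertex $v$ with $v\cup N(v)\neq K^{0}$ (such a $v$ exists precisely because the $1$-skeleton is incomplete, e.g.\ either endpoint of a missing edge) and writes $K$ as the pushout of $K\setminus v \leftarrow K_{N(v)} \rightarrow K_{v\cup N(v)}$. The price of that decomposition is that one leg, $K_{v\cup N(v)}$, is not itself a vertex deletion, so the paper needs an extra step: since $v\cup N(v)\neq K^{0}$, this complex is a full subcomplex of $K\setminus w$ for some vertex $w$, hence $\caa^{K_{v\cup N(v)}}$ retracts off $\caa^{K\setminus w}$ by Lemma~\ref{lem:DS}, and Theorem~\ref{thm:Pclosedunderret} then places its loop space in $\mathcal{P}$. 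Your decomposition $K=(K\setminus i)\cup_{K\setminus\{i,j\}}(K\setminus j)$ over a missing edge $\{i,j\}$ is more symmetric: both legs are literally vertex deletions, so the hypothesis applies to them directly and the retraction step disappears; in effect you use the hypothesis at the two deletions $K\setminus i$ and $K\setminus j$, where the paper uses it at $K\setminus v$ and (through a retraction) at $K\setminus w$. Both arguments are sound, and yours is the more economical application. The one point you should make fully explicit (it is the same implicit convention the paper relies on) is that $i$ and $j$ must be genuine vertices of $K$, i.e.\ $\{i\},\{j\}\in K$: this is exactly what makes $K\setminus\{i,j\}$ a \emph{proper} full subcomplex of $K\setminus i$ and of $K\setminus j$, since if, say, $j$ were a ghost vertex then $K\setminus\{i,j\}=K\setminus i$ and the properness hypothesis of Theorem~\ref{thm:pushoutofPisinP} would fail. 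With that reading, your verification of the hypotheses is complete and the conclusion follows as you state.
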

\begin{proof} 
For a vertex $v \in K$, let $N(v)$ be the set of vertices adjacent to $v$ in the $1$-skeleton of $K$.  
Since~$K$ does not have a complete $1$-skeleton, there exists a vertex $v$ such that $v \cup N(V) \neq K^0$. By \cite[Lemma 4.4]{St1}, there is a pushout of simplicial complexes \[\begin{tikzcd}
	{K_{N(v)}} & {K_{v \cup N(v)}} \\
	{K \setminus v} & K.
	\arrow[from=1-1, to=1-2]
	\arrow[from=1-1, to=2-1]
	\arrow[from=1-2, to=2-2]
	\arrow[from=2-1, to=2-2]
\end{tikzcd}\] 
If $\Omega \caa^{K \setminus v} \in \mathcal{P}$ and $\Omega \caa^{K_{v \cup N(v)}} \in \mathcal{P}$ then Theorem ~\ref{thm:pushoutofPisinP} implies that $\Omega \caa^K \in \mathcal{P}$. 

By assumption, $\Omega \caa^{K \setminus v} \in \mathcal{P}$. 
For $\Omega \caa^{K_{v \cup N(v)}}$, since $v \cup N(v) \neq K^0$, there exists a vertex $w$ such that $v \cup N(v)$ is a full subcomplex of $K \setminus w$. By Lemma ~\ref{lem:DS}, $\caa^{v \cup N(v)}$ retracts off $\caa^{K \setminus w}$, and so $\Omega \caa^{v \cup N(v)}$ retracts off $\Omega \caa^{K \setminus w}$. By assumption, $\Omega \caa^{K \setminus w} \in \mathcal{P}$, and so Theorem ~\ref{thm:Pclosedunderret} implies that $\Omega \caa^{v \cup N(v)} \in \mathcal{P}$. 
\end{proof}

Theorem ~\ref{thm:restinPimplyP} will be used to show that low dimensional pseudomanifolds which do not have a complete $1$-skeleton have their associated polyhedral products in $\mathcal{P}$. To do this, we first show that if $K$ is a pseudomanifold, then $K \setminus i$ satisfies the hypotheses of Theorem ~\ref{thm:maniwithboundretskel}.

\begin{lemma}
\label{lem:restsathypo}
    Let $K$ be a pseudomanifold of dimension $n$ on $[m]$. For any $i \in [m]$, $K \setminus i$ is a pure simplicial complex of dimension $n$, a weak pseudomanifold with boundary, and each connected component of $D(K \setminus i)$ contains a vertex of degree strictly less than $n+1$.
\end{lemma}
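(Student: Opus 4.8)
The plan is to exploit the one defining property of $K$ that $K\setminus i$ may lose, namely that $K$ has empty boundary so that every codimension-one face lies in \emph{exactly} two maximal faces, and to transfer information from $K$ to $K\setminus i$ by a single vertex-swapping move. I regard $K\setminus i$ as the full subcomplex of $K$ on $[m]\setminus\{i\}$, so its faces are precisely the faces of $K$ avoiding $i$ and its $n$-simplices are precisely the $n$-simplices of $K$ avoiding $i$. Write $V_i$ for the set of $n$-simplices of $K$ containing $i$ and $W_i$ for those avoiding $i$; since $i$ is a vertex, purity of $K$ forces $V_i\neq\emptyset$. The swap to keep in mind is: if $\sigma$ is an $n$-simplex with $i\in\sigma$, then $\eta=\sigma\setminus\{i\}$ is a codimension-one face with $i\notin\eta$, and the empty-boundary property supplies a second $n$-simplex $\sigma'=\eta\cup\{j\}$ containing $\eta$, necessarily with $j\neq i$.

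First I would prove purity. Given a face $\gamma$ of $K\setminus i$, purity of $K$ yields an $n$-simplex $\sigma\supseteq\gamma$; if $i\in\sigma$ then $\gamma\subseteq\sigma\setminus\{i\}=\eta$, and the swap produces $\sigma'\supseteq\gamma$ with $i\notin\sigma'$, so $\gamma$ lies in an $n$-simplex of $K\setminus i$. Applying the swap to any $\sigma\in V_i$ shows $W_i\neq\emptyset$, so $K\setminus i$ has dimension exactly $n$.

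Next I would verify that $K\setminus i$ is a weak pseudomanifold with boundary. A codimension-one face of $K\setminus i$ is a face $\eta$ on $n$ vertices with $i\notin\eta$; in $K$ it lies in exactly two $n$-simplices $\eta\cup\{a\}$ and $\eta\cup\{b\}$ with $a\neq b$. At most one of $a,b$ equals $i$, so $\eta$ lies in one or two $n$-simplices of $K\setminus i$, which is exactly the required condition.

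The \emph{main obstacle} is the statement about the components of $D(K\setminus i)$, and this is the only place connectivity of $D(K)$ enters. The key reduction is that $D(K\setminus i)$ is the subgraph of $D(K)$ induced on $W_i$, since the intersection of two $n$-simplices avoiding $i$ is unchanged by deleting $i$. Because $K$ has empty boundary, every $n$-simplex has degree exactly $n+1$ in $D(K)$; hence a vertex $\sigma\in W_i$ satisfies $\mathrm{deg}_{D(K\setminus i)}(\sigma)<n+1$ precisely when some codimension-one face of $\sigma$ is shared in $K$ with a simplex of $V_i$, i.e.\ when $\sigma$ is $D(K)$-adjacent to $V_i$. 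Now let $C$ be a connected component of $D(K\setminus i)$. If no vertex of $C$ were $D(K)$-adjacent to $V_i$, then $C$ would have no $D(K)$-edges leaving it: none to $W_i\setminus C$ because $C$ is a component of the induced subgraph, and none to $V_i$ by assumption. Connectivity of $D(K)$ would then force $C$ to be all of $D(K)$, contradicting $V_i\neq\emptyset$. Therefore some $\sigma\in C$ is adjacent to $V_i$ and has degree strictly less than $n+1$. I expect the remaining work to be only light bookkeeping (for instance confirming that distinct codimension-one faces of $\sigma$ have distinct $D(K)$-neighbours, which makes the degree count exact), and that the swap and the component argument apply uniformly across dimensions.
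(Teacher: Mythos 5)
Your proof is correct and takes essentially the same approach as the paper's: the same swap through the codimension-one face avoiding $i$ to establish purity, the same one-or-two count of maximal faces for the weak pseudomanifold property, and the same identification of $D(K\setminus i)$ as an induced subgraph of the $(n+1)$-regular connected graph $D(K)$, with connectivity of $D(K)$ forcing every component of $D(K\setminus i)$ to contain a vertex adjacent to a deleted vertex (the paper splits this last step into connected and disconnected cases, while you handle both uniformly, but the idea is identical). The only point the paper verifies that you omit is that the boundary of $K\setminus i$ is non-empty; this is not required by the paper's literal definition of a weak pseudomanifold with boundary, and in any case it follows from your degree argument, since a vertex of $D(K\setminus i)$ of degree less than $n+1$ has a codimension-one face contained in exactly one maximal face.
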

\begin{proof}
    First, we show that $K \setminus i$ is pure of dimension $n$. Suppose $\sigma$ is a maximal face of $K \setminus i$ of dimension $k < n$. By assumption, $K$ is pure of dimension $n$ so $\sigma$ must be contained in some maximal simplex $\sigma'\in K$ with $i \in \sigma'$. Since $i\notin\sigma$, there must exist a codimension one face $\tau \subset \sigma'$ such that $\sigma \subseteq \tau$ and $i \notin \tau$. Moreover $K$ is a pseudomanifold, and so in $K$, $\tau$ is contained in two maximal faces, $\sigma'$ and $\sigma''$. However, since~$\sigma'$ contains $i$ and $\tau$ is of codimension one, $\sigma''$ does not contain $i$, and therefore $\sigma'' \in K \setminus i$. Since $\sigma \subseteq \tau$, this implies $\sigma \subset \sigma''$, which is a contradiction. Thus every maximal face of $K\setminus i$ has dimension $n$, implying that $K\setminus i$ is pure of dimension $n$.

    Next, we show that $K \setminus i$ is a weak pseudomanifold with boundary. Let $\tau$ be a codimension one face of $K \setminus i$. In $K$, since $K$ is a pseudomanifold, $\tau$ is contained in two maximal faces, $\sigma$ and $\sigma'$. At most one of $\sigma$ and $\sigma'$ contains the vertex $i$, otherwise $\sigma=\tau\cup\{i\}=\sigma'$. Therefore one of $\sigma$ and $\sigma'$ is in $K \setminus i$. Hence, $\tau$ is contained in either one or two maximal faces in $K \setminus i$. We now show that the boundary of $K\setminus i$ is non-empty. Since $K$ is pure, the vertex $i$ must be contained in at least one maximal face $\sigma''$ in $K$. Hence, if $\tau'$ is the codimension one face of $\sigma''$ which does not contain $i$, then it follows that $\tau'$ is contained in the boundary of $K \setminus i$. 

    Finally, we show that each connected component of $D(K \setminus i)$ contains a vertex of degree strictly less than $n+1$. Since $K$ is a pseudomanifold of dimension $n$, each maximal face contains $n+1$ codimension one faces, each of which is contained in two distinct maximal faces. Therefore, each vertex in $D(K)$ has degree $n+1$. The graph $D(K \setminus i)$ is obtained from $D(K)$ by removing vertices corresponding to maximal faces of $K$ containing the vertex $i$. If $D(K\setminus i)$ is connected, then since $D(K)$ is connected, there must exist a vertex in $D(K\setminus i)$ which is adjacent to at least one of the vertices removed from $D(K)$. Therefore, there must exist a vertex in $D(K\setminus i)$ with degree strictly less than $n+1$. Now suppose $D(K\setminus i)$ is disconnected, and let $x,y \in D(K \setminus i)$ be two vertices in different connected components. Since $D(K)$ is connected and $D(K \setminus i)$ is disconnected, any path in $D(K)$ between $x$ and $y$ must pass through one of the vertices removed from $D(K)$ to obtain $D(K \setminus i)$. Therefore, for each connected component of $D(K\setminus i)$, there must exist a vertex $v$ such that $v$ is adjacent to at least one of the vertices removed from $D(K)$. Hence, $v$ has degree strictly less than $n+1$ in $D(K\setminus i)$.
\end{proof}

\begin{theorem}
\label{thm:23dimpseudoinP}
    Let $K$ be either a $2$-dimensional pseudomanifold or a $3$-dimensional pseudomanifold such that $H_*(|L|)$ is torsion free for all full subcomplexes $L$ of $K$ with complete $1$-skeleton. Suppose that $K$ is on the vertex set $[m]$ and $A_1,\cdots,A_m$ are spaces such that $\Sigma A_i \in \mathcal{W}$ for all $i\in [m]$. If $K$ does not have complete $1$-skeleton then $\Omega \caa^K \in \mathcal{P}$.
\end{theorem}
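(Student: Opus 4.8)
The plan is to peel off one vertex at a time and reduce to the pseudomanifold-with-boundary results of the previous section. Since $K$ does not have a complete $1$-skeleton, Theorem~\ref{thm:restinPimplyP} (whose hypotheses $\Sigma A_i\in\mathcal{W}$ we have) reduces the claim to showing that $\Omega\caa^{K\setminus i}\in\mathcal{P}$ for every $i\in[m]$. So I would fix $i$ and analyse $K\setminus i$. By Lemma~\ref{lem:restsathypo}, whether $K$ has dimension $2$ or $3$, the complex $K\setminus i$ is a pure weak pseudomanifold with boundary of dimension $n$ whose dual graph has the property that each connected component contains a vertex of degree strictly less than $n+1$. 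These are precisely the structural hypotheses required to apply Theorem~\ref{thm:pseudowithbound2and3dim} to $K\setminus i$, and the condition $\Sigma A_j\in\mathcal{W}$ is inherited by the remaining vertices.

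When $n=2$ there is nothing further to check: Theorem~\ref{thm:pseudowithbound2and3dim} applies directly to the $2$-dimensional pseudomanifold-with-boundary $K\setminus i$ and yields $\Omega\caa^{K\setminus i}\in\mathcal{P}$. When $n=3$ the only remaining point is to verify the torsion-free hypothesis for $K\setminus i$, namely that $H_*(|L|)$ is torsion free for every full subcomplex $L$ of $K\setminus i$ with complete $1$-skeleton. Here I would use the elementary observation that a full subcomplex of $K\setminus i$ on a vertex set $J\subseteq[m]\setminus\{i\}$ coincides with the full subcomplex $K_J$ of $K$: since $i\notin J$, no face with all vertices in $J$ contains $i$, so passing from $K$ to $K\setminus i$ removes nothing from $K_J$. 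Consequently every full subcomplex of $K\setminus i$ with complete $1$-skeleton is a full subcomplex of $K$ with complete $1$-skeleton, and the hypothesis on $K$ transfers verbatim. Theorem~\ref{thm:pseudowithbound2and3dim} then gives $\Omega\caa^{K\setminus i}\in\mathcal{P}$ in this case as well.

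Having established $\Omega\caa^{K\setminus i}\in\mathcal{P}$ for all $i\in[m]$ in both dimensions, a final appeal to Theorem~\ref{thm:restinPimplyP} completes the argument. The proof is essentially a matter of assembling the earlier results, so I do not anticipate a genuine obstacle; the one point needing care is the transfer of the torsion-free condition from $K$ to $K\setminus i$ in the $3$-dimensional case, which rests on the identification $(K\setminus i)_J=K_J$ for $J\not\ni i$. I would also confirm that the notion of full subcomplex with complete $1$-skeleton appearing in Theorem~\ref{thm:pseudowithbound2and3dim} matches the one in the hypothesis of the present statement, so that the reduction is clean.
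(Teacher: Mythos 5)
Your proposal is correct and follows essentially the same route as the paper: reduce via Theorem~\ref{thm:restinPimplyP} to showing $\Omega\caa^{K\setminus i}\in\mathcal{P}$ for each $i$, use Lemma~\ref{lem:restsathypo} to verify the structural hypotheses, and apply Theorem~\ref{thm:pseudowithbound2and3dim}. You additionally make explicit the transfer of the torsion-free hypothesis in the $3$-dimensional case via the identification $(K\setminus i)_J=K_J$ for $J\not\ni i$, a point the paper's proof leaves implicit.
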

\begin{proof}
    For all $i \in [m]$, Lemma ~\ref{lem:restsathypo} implies $K \setminus i$ satisfies the hypotheses of Theorem ~\ref{thm:pseudowithbound2and3dim}. Therefore, $\Omega \caa^{K \setminus i} \in \mathcal{P}$ for all $i \in [m]$. Since $K$ does not have a complete $1$-skeleton, Theorem~\ref{thm:restinPimplyP} implies that $\Omega \caa^K \in \mathcal{P}$.
\end{proof}

A special case of pseudomanifolds of dimension $2$ are connected, orientable, closed surfaces. In this case, we can give a complete picture of $\Omega \caa^K$ without the assumption on the $1$-skeleton. 

\begin{theorem}
\label{thm:triangsurface}
    Let $K$ be the triangulation of a connected, orientable, closed surface on $[m]$. Let $A_1,\cdots,A_m$ be spaces such that $\Sigma A_i \in \mathcal{W}$. Then $\Omega \caa^K \in \mathcal{P}$.
\end{theorem}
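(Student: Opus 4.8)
The plan is to reduce everything to Theorem~\ref{thm:2diminP}. A triangulation $K$ of a surface is a $2$-dimensional simplicial complex, and the hypothesis $\Sigma A_i\in\mathcal{W}$ is given, so the only remaining input needed is the torsion condition in that theorem: that $H_*(|L|)$ is torsion free for every full subcomplex $L$ of $K$ with complete $1$-skeleton. Rather than analyse which full subcomplexes have complete $1$-skeleton, I would prove the cleaner and stronger statement that $H_*(|L|)$ is torsion free for \emph{every} subcomplex $L\subseteq K$; this clearly includes all the full subcomplexes required by Theorem~\ref{thm:2diminP}, and the theorem then applies verbatim to give $\Omega\caa^K\in\mathcal{P}$.

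The work is therefore concentrated in the following lemma: if $K$ triangulates a connected, orientable, closed surface, then $H_*(|L|)$ is torsion free for every subcomplex $L$ of $K$. To prove it I would pass to a regular neighbourhood. Viewing $|L|$ as a compact subpolyhedron of the PL surface $|K|$, regular neighbourhood theory produces a compact PL $2$-manifold with boundary $N\subseteq|K|$ that deformation retracts onto $|L|$, so that $H_*(|L|)\cong H_*(N)$; and, being a subsurface of the orientable surface $|K|$, the manifold $N$ is itself orientable.

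I would then distinguish two cases. If $L=K$, then $N=|K|$ and $H_*(|K|)$ is torsion free because $|K|$ is a closed orientable surface. If $L\neq K$, then, since $K$ is pure $2$-dimensional, $L$ must omit some triangle $t$; choosing $N$ to avoid the barycentre of $t$ makes $N$ a proper subsurface of the connected surface $|K|$. Any boundaryless component of $N$ would be open and closed in $|K|$ and hence all of $|K|$, which is impossible, so every component of $N$ has non-empty boundary and is therefore homotopy equivalent to a wedge of circles, giving free homology. In either case $H_*(|L|)\cong H_*(N)$ is torsion free, completing the lemma and hence the theorem.

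The main obstacle is the lemma, and in particular the fact that it seems to require genuine surface topology --- regular neighbourhoods and the homotopy type of compact surfaces with boundary --- rather than the combinatorial and homotopy-theoretic tools developed earlier in the paper. It is worth stressing that orientability is used precisely in the case $L=K$: for a neighbourly triangulation this case really occurs, and for the non-orientable analogue (the six-vertex triangulation of $\mathbb{RP}^2$ already has complete $1$-skeleton) the homology $H_*(\mathbb{RP}^2)$ has torsion, so the hypothesis of Theorem~\ref{thm:2diminP} would fail. If one wishes to minimise the appeal to surface topology, an alternative structure is to handle the case where $K$ does not have complete $1$-skeleton immediately via Theorem~\ref{thm:23dimpseudoinP}, reserving the lemma for the complete $1$-skeleton case; but some form of the torsion computation for subcomplexes is unavoidable.
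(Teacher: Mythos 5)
Your proposal is correct, and its overall shape matches the paper: both reduce the theorem to Theorem~\ref{thm:2diminP}, so the whole content lies in verifying that $H_*(|L|)$ is torsion free for the relevant full subcomplexes $L$. Where you diverge is in how that verification is done. The paper's proof is three lines: since $|K|$ is a connected, orientable, closed surface it embeds in $\mathbb{R}^3$, hence so does every $|K_I|$, and then \cite[Corollary 3.46]{H} (an Alexander duality consequence for compact, locally contractible subspaces of $S^n$) gives torsion-free homology at once; orientability enters only through the existence of the embedding. You instead argue intrinsically on the surface: pass to a regular neighbourhood $N$ of $|L|$, note that a proper compact subsurface of a connected closed surface has every component with non-empty boundary and is thus homotopy equivalent to a wedge of circles, and handle $L=K$ separately using orientability of the closed surface. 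Both arguments are sound. The paper's buys brevity and avoids PL machinery by outsourcing the topology to a citation; yours is more self-contained (no Alexander duality), proves the stronger statement for \emph{all} subcomplexes rather than just full ones, and isolates exactly where orientability is needed ($L=K$), which dovetails nicely with the paper's later remark that the $6$-vertex triangulation of $\mathbb{R}P^2$ genuinely fails. Your closing observation that the non-complete-$1$-skeleton case could alternatively be routed through Theorem~\ref{thm:23dimpseudoinP} is also consistent with how the paper handles $S^3$ in Theorem~\ref{thm:introtriofS3inP}, though for surfaces the paper does not need that split.
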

\begin{proof} 
Since $K$ is the triangulation of a connected, orientable, closed surface, for each $I \subseteq [m]$, $|K_I|$ embeds into $\mathbb{R}^3$. By \cite[Corollary 3.46]{H}, this implies that $H_*(|K_I|)$ is torsion free. Therefore, Theorem ~\ref{thm:2diminP} implies that $\Omega \caa^K \in \mathcal{P}$.
\end{proof}

A special case of Theorem ~\ref{thm:triangsurface} proves Theorem~\ref{thm:introtriangsurface}. 

\begin{proof}[Proof of Theorem~\ref{thm:introtriangsurface}] 
Take each pair $(CA_{i},A_{i})$ in Theorem~\ref{thm:triangsurface} to be $(D^{2},S^{1})$. 
\end{proof} 

\section{Loop space decompositions of moment-angle manifolds}
\label{sec:loopMaM}

In this section, we specialise to moment-angle complexes associated to triangulations of spheres, all of which are pseudomanifolds. If $K$ is a triangulation of $S^{2}$ then $\Omega\zk\in\mathcal{P}$ by Theorem ~\ref{thm:introtriangsurface}. We will prove an analogous result if $K$ is a triangulation of $S^3$. To start, we consider more general properties of a family of odd dimensional sphere triangulations called neighbourly triangulations. Let $K$ be a triangulation of $S^n$ on $[m]$. In this case, $\zk$ has the structure of a manifold of dimension $m+n+1$ \cite[Theorem 4.1.4]{BP} which is $2$-connected. 
\medskip 

\noindent 
\textbf{Pseudomanifolds and the minimally non-Golod property}. 
An important algebraic condition on simplicial complexes is the notion of Golodness. A simplicial complex $K$ on $[m]$ is called \emph{Golod} if all cup products and higher Massey products in $H^*(\zk)$ are trivial, and $K$ is \emph{minimally non-Golod} if $K\setminus i$ is Golod for all $i \in [m]$. For example, if $\zk$ is a suspension, or a co-$H$-space, then all cup products and higher Massey products vanish in $H^*(\zk)$, implying that $K$ is Golod.

We focus our attention on a special family of odd dimensional sphere triangulations. Recall from the Introduction that a simplicial complex $K$ is called $k$\textit{-neighbourly} if every set of $k+1$ vertices spans a simplex. A triangulation~$K$ of a sphere $S^{2n+1}$ is called \emph{neighbourly} if $K$ is $n$-neighbourly. It was shown in \cite[Proposition~3.6]{L} that if $K$ is the boundary of a dual polytope and neighbourly, then $K$ is minimally non-Golod. Gitler and Lopez de Medrano \cite[Theorem 1.3]{GLdM} showed that in this case the corresponding $\zk$ is diffeomorphic to a connected sum of sphere products, with two spheres in each product. We give an anaologue of Limonchenko's result that holds for any $n$-neighbourly $(2n+1)$-dimensional pseudomanifold. This requires a suspension splitting of moment-angle complexes from \cite[Corollary 2.23]{BBCG}, known as the BBCG decomposition.

\begin{theorem}
    \label{thm:BBCG}
    Let $K$ be a simplicial complex. There is a homotopy equivalence \[\Sigma \zk \simeq \bigvee\limits_{I \notin K}\Sigma^{2+|I|}|K_I|\] that is natural for inclusions of simplicial complexes. $\qqed$
\end{theorem}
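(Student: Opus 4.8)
The plan is to obtain this moment-angle statement as a special case of a more general stable splitting of polyhedral products into \emph{smash polyhedral products}, and then to identify the smash summands explicitly in the case $(X_i,A_i)=(D^2,S^1)$. For $I\subseteq[m]$ recall the full subcomplex $K_I$, and define the smash polyhedral product $\widehat{\uxa}^{K_I}=\bigcup_{\sigma\in K_I}\widehat{Y}^{\sigma}\subseteq\bigwedge_{i\in I}X_i$, where $\widehat{Y}^{\sigma}=\bigwedge_{i\in I}Y_i$ with $Y_i=X_i$ for $i\in\sigma$ and $Y_i=A_i$ otherwise. The basic tool is the classical natural splitting of a suspended Cartesian product, $\Sigma\prod_{i=1}^{m}X_i\simeq\bigvee_{\emptyset\neq I\subseteq[m]}\Sigma\,\widehat{X}_I$ with $\widehat{X}_I=\bigwedge_{i\in I}X_i$, induced by the smash projections $\prod_i X_i\to\widehat{X}_I$.

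First I would establish the general decomposition $\Sigma\uxa^{K}\simeq\bigvee_{\emptyset\neq I\subseteq[m]}\Sigma\,\widehat{\uxa}^{K_I}$, natural in $K$. One checks that each smash projection $\prod_i X_i\to\widehat{X}_I$ carries the subspace $\uxa^{K}$ into $\widehat{\uxa}^{K_I}$, so the product splitting restricts to a natural map $\Sigma\uxa^{K}\to\bigvee_{I}\Sigma\,\widehat{\uxa}^{K_I}$. To see this is a homotopy equivalence I would induct on the number of vertices: removing a vertex exhibits $\uxa^{K}$ as a pushout of polyhedral products over full subcomplexes, as in the vertex-removal pushout of \cite[Lemma 4.4]{St1} used in the proof of Theorem~\ref{thm:restinPimplyP}. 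Suspending this pushout and applying the product splitting separates the wedge summands into those indexed by $I$ with $m\in I$ and those with $m\notin I$; the inductive hypothesis identifies the latter, while the former reorganise into the smash polyhedral products over the subcomplexes containing $m$. Since every term in sight is a suspension, the comparison map can be checked to be an equivalence summand by summand, and naturality is inherited from the naturality of the smash projections.

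Next I would specialise to $(X_i,A_i)=(D^2,S^1)$, that is to $\zk$, and identify the smash summands. Since $D^2=CS^1$, each pair is a cone pair, and I would show that for cone pairs $\widehat{\uxa}^{K_I}\simeq\Sigma|K_I|\wedge\bigwedge_{i\in I}A_i$; this is the smash analogue of the identification $\caa^{\partial\sigma}\simeq\Sigma^{n-1}A_{j_1}\wedge\cdots\wedge A_{j_n}$ used in Proposition~\ref{prop:torfreeret}, and is proved by the same kind of cone-collapse and suspension argument. With $A_i=S^1$ we have $\bigwedge_{i\in I}S^1=S^{|I|}$, so $\widehat{\zk}^{K_I}\simeq\Sigma|K_I|\wedge S^{|I|}=\Sigma^{1+|I|}|K_I|$, and hence $\Sigma\,\widehat{\zk}^{K_I}\simeq\Sigma^{2+|I|}|K_I|$. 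Finally, if $I\in K$ then $K_I$ is a simplex, so $|K_I|$ is contractible and the corresponding summand vanishes; the remaining nonzero summands are exactly those with $I\notin K$, which yields the stated equivalence. Naturality for inclusions of simplicial complexes follows from the naturality of the general decomposition together with the naturality of the identification of the smash summands.

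The main obstacle is the general stable decomposition and its naturality: one must verify that the smash projections really do restrict to the polyhedral product, and, more delicately, that the inductive pushout argument assembles the restricted maps into a single coherent homotopy equivalence rather than merely a summand-wise homology isomorphism. The cone-pair identification of $\widehat{\uxa}^{K_I}$ is a secondary technical point, but it is routine once the boundary-of-a-simplex case underlying Proposition~\ref{prop:torfreeret} is in hand.
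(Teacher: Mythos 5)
The paper offers no proof of this statement at all: it is quoted directly from \cite[Corollary 2.23]{BBCG}, which is why it ends with a box. Your proposal is therefore really measured against the original Bahri--Bendersky--Cohen--Gitler argument, and in outline it reconstructs that argument faithfully: the natural stable splitting of $\Sigma\uxa^{K}$ into suspensions of smash polyhedral products over the full subcomplexes $K_I$, the identification of the smash polyhedral product of cone pairs as $\Sigma|K_I|\wedge A_{i_1}\wedge\cdots\wedge A_{i_k}$, the specialisation to $(D^2,S^1)$ giving $\Sigma^{1+|I|}|K_I|$, and the vanishing of the summands with $I\in K$ (where $K_I$ is a full simplex). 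Those pieces, and the naturality claim, are all correct in outline.

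The genuine gap is in the inductive mechanism you chose for the main splitting. The pushout of \cite[Lemma 4.4]{St1}, namely $K=(K\setminus v)\cup_{K_{N(v)}}K_{v\cup N(v)}$, is degenerate whenever $v\cup N(v)=K^0$: in that case $K_{N(v)}=K\setminus v$ and $K_{v\cup N(v)}=K$, so the pushout reads $K=(K\setminus v)\cup_{K\setminus v}K$ and gives no reduction in the number of vertices. If the $1$-skeleton of $K$ is complete this happens for \emph{every} vertex, so your induction cannot even start; this is not a marginal case, since it includes $\partial\Delta^{m-1}$ and every neighbourly complex --- in particular the neighbourly triangulations of $S^{2n+1}$ that this paper is chiefly about (it is exactly why Theorem~\ref{thm:restinPimplyP} carries the hypothesis that $K$ does not have a complete $1$-skeleton). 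The repair is to use a decomposition that never degenerates: either the star--link pushout $K=(K\setminus v)\cup_{\link(v)}\starr(v)$, where $\starr(v)=v\ast\link(v)$ is a cone, so that $\uxa^{\starr(v)}\cong X_v\times\uxa^{\link(v)}$ and the glueing map is the inclusion $A_v\times\uxa^{\link(v)}\rightarrow X_v\times\uxa^{\link(v)}$, all controlled by the link, which has fewer vertices; or, as in \cite{BBCG}, run the argument as a homotopy colimit over the face poset of $K$, splitting each $\Sigma\uxa^{\sigma}$ naturally and commuting the wedge past the colimit. With either repair your closing step is sound: the comparison map is natural and a homology isomorphism between simply connected suspensions, hence an equivalence by Whitehead's theorem --- which is the correct resolution of the ``summand by summand'' delicacy you flagged.
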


The BBCG decomposition for $\Sigma \zk$ ``desuspends" if there is a homotopy equivalence 
\[\zk \simeq \bigvee\limits_{I \notin K}\Sigma^{1+|I|}|K_I|.\]
Observe that if the BBCG decomposition desuspends then $\zk$ is a suspension, and so is Golod.

\begin{theorem}
    \label{thm:neighbourlupseudominnonGolod}
    Let $K$ be a pseudomanifold on $[m]$ of dimension $2n+1$. If $K$ is $n$-neighbourly then the BBCG decomposition for $\Sigma \mathcal{Z}_{K \setminus i}$ desuspends for all $i \in [m]$. Consequently, $K$ is either Golod or minimally non-Golod.
\end{theorem}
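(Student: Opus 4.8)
The plan is to derive everything from the skeletal retraction of Theorem~\ref{thm:maniwithboundretskel} together with the homology concentration forced by $n$-neighbourliness, treating the ``consequently'' clause as a formality. Indeed, once the BBCG decomposition of $\Sigma\mathcal{Z}_{K\setminus i}$ desuspends we have
\[\mathcal{Z}_{K\setminus i}\simeq\bigvee_{I\notin K\setminus i}\Sigma^{1+|I|}|(K\setminus i)_I|,\]
which is a suspension and hence a co-$H$-space, so all cup and higher Massey products in $H^*(\mathcal{Z}_{K\setminus i})$ vanish and $K\setminus i$ is Golod. As this holds for every $i\in[m]$, the complex $K$ is minimally non-Golod by definition, and in particular $K$ is Golod or minimally non-Golod. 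So the whole content is the desuspension statement, and I would fix $i$ and study $K\setminus i$.

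First I would record that, by Lemma~\ref{lem:restsathypo}, $K\setminus i$ is a pure weak pseudomanifold with boundary of dimension $2n+1$ each of whose dual-graph components has a vertex of degree less than $2n+2$. Applying Theorem~\ref{thm:maniwithboundretskel} in the moment-angle case $(CA_j,A_j)=(D^2,S^1)$ yields a homotopy equivalence
\[\mathcal{Z}_{(K\setminus i)^{2n}}\simeq\bigvee_{k}\Big(\mathcal{Z}_{\partial\sigma_k}\rtimes\prod_{j\notin\sigma_k}S^1\Big)\vee\mathcal{Z}_{K\setminus i},\]
where the $\sigma_k$ are the maximal $(2n+1)$-faces and the map $\mathcal{Z}_{(K\setminus i)^{2n}}\to\mathcal{Z}_{K\setminus i}$ has a right homotopy inverse. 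Each $\partial\sigma_k$ is the boundary of a $(2n+1)$-simplex, so $\mathcal{Z}_{\partial\sigma_k}\simeq S^{4n+3}$; since this is a suspension, the identity $A\rtimes B\simeq A\vee(A\wedge B)$ already used in Proposition~\ref{prop:torfreeret}, together with the fact that a torus suspends to a wedge of spheres, shows each extra summand $S^{4n+3}\rtimes\prod S^1$ is a wedge of spheres. Thus $\mathcal{Z}_{K\setminus i}$ is a wedge summand of $\mathcal{Z}_{(K\setminus i)^{2n}}$ with wedge-of-spheres complement, and it would suffice to prove that $\mathcal{Z}_{(K\setminus i)^{2n}}$ is a wedge of spheres.

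To do this I would pass to the BBCG decomposition (Theorem~\ref{thm:BBCG}) of $\Sigma\mathcal{Z}_{(K\setminus i)^{2n}}$, whose summands are the suspensions of the $2n$-skeleta $|(K_I)^{2n}|$ over non-faces $I\subseteq[m]\setminus i$. Here $n$-neighbourliness is decisive: every subset of size at most $n+1$ is a face, so each $K_I$, hence its $2n$-skeleton, contains the complete $n$-skeleton on $I$ and is therefore $(n-1)$-connected. When $n=1$ this already confines the homology of each $|(K_I)^{2}|$ to the two consecutive degrees $1$ and $2$, so that Lemma~\ref{lem:homoldethomot} identifies it with a wedge of spheres once its homology is torsion free, the latter supplied by Proposition~\ref{prop:torfreeret}. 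For general $n$ the homology of $|(K_I)^{2n}|$ a priori spreads across degrees $n$ through $2n$, and the crux is to collapse this range: using that $K$ is a $(2n+1)$-dimensional pseudomanifold the top homology of a proper full subcomplex vanishes, and a combinatorial Alexander-type pairing of $\widetilde H_j(|K_I|)$ with $\widetilde H_{2n-j}(|K_{[m]\setminus I}|)$ should force the homology into the single degree $n$, so that each $|K_I|$ is a wedge of $n$-spheres and $\mathcal{Z}_{(K\setminus i)^{2n}}$ is a wedge of spheres.

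Finally I would assemble the desuspension. Since $\mathcal{Z}_{(K\setminus i)^{2n}}$ is a wedge of spheres, so is its wedge summand $\mathcal{Z}_{K\setminus i}$, which is then determined by its homology; desuspending each BBCG summand $\Sigma^{2+|I|}|(K\setminus i)_I|=\Sigma\big(\Sigma^{1+|I|}|(K\setminus i)_I|\big)$ of the (now wedge-of-spheres) space therefore produces the equivalence $\mathcal{Z}_{K\setminus i}\simeq\bigvee_{I\notin K\setminus i}\Sigma^{1+|I|}|(K\setminus i)_I|$, naturality of the BBCG decomposition along $(K\setminus i)^{2n}\hookrightarrow K\setminus i$ matching the factors. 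The main obstacle is the homology-concentration step for general $n$: proving that for an $n$-neighbourly $(2n+1)$-dimensional pseudomanifold the full subcomplexes on non-faces have homology only in degree $n$. This is precisely where the neighbourliness (lower vanishing) and the pseudomanifold duality (upper vanishing) must be combined, and where adapting the duality from triangulated spheres to genuine pseudomanifolds with boundary is the delicate point.
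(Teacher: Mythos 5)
Your opening moves match the paper's: the ``consequently'' clause is indeed formal, and the paper likewise uses Lemma~\ref{lem:restsathypo} together with Theorem~\ref{thm:maniwithboundretskel} to retract $\mathcal{Z}_{K\setminus i}$ off $\mathcal{Z}_{(K\setminus i)^{2n}}$. The gap is in your central step: proving that $\mathcal{Z}_{(K\setminus i)^{2n}}$ is a wedge of spheres by forcing the homology of every full subcomplex into the single degree $n$. The ``Alexander-type pairing'' you appeal to is a sphere phenomenon, not a pseudomanifold one: it is exactly what the paper proves in Theorem~\ref{thm:neighbourlytriofsphere}, where the hypothesis that $K$ triangulates $S^{2n+1}$ makes the real moment-angle complex $\mathbb{R}\mathcal{Z}_K$ a \emph{closed manifold}, and Poincar\'e duality for that manifold yields the concentration. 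For a general pseudomanifold there is no such duality, and there is no reason for $H_*(|K_I|)$ to be torsion free or concentrated in one degree; full subcomplexes of pseudomanifolds can carry torsion (compare the paper's own remark using the $6$-vertex $\mathbb{R}P^2$ as a full subcomplex of a sphere triangulation). Relatedly, your citation of Proposition~\ref{prop:torfreeret} misreads it: that proposition only \emph{transfers} torsion-freeness between $H_*(\caa^{K})$ and $H_*(\caa^{K^{n-1}})$ (an ``if and only if''); it does not supply torsion-freeness of either side, and for pseudomanifolds both can fail. So the wedge-of-spheres statement you reduce to is strictly stronger than the theorem and is not provable in this generality --- which you implicitly concede by flagging the concentration step as unresolved.

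The idea you are missing is that desuspension of the BBCG decomposition is a far weaker property than being a wedge of spheres, and it can be had with no homological input at all. The paper cites two results of Iriye--Kishimoto: by \cite[Theorem 1.3]{IK2}, $\mathcal{Z}_L$ is a co-$H$ space if and only if the BBCG decomposition of $\Sigma\mathcal{Z}_L$ desuspends; and by \cite[Theorem 1.6]{IK2}, the BBCG decomposition desuspends for \emph{any} $n$-neighbourly $2n$-dimensional complex, which $(K\setminus i)^{2n}$ is purely for reasons of dimension and neighbourliness. Hence $\mathcal{Z}_{(K\setminus i)^{2n}}$ is a suspension --- its wedge summands $\Sigma^{1+|I|}|K_I|$ may be suspensions of spaces with torsion, not spheres --- and $\mathcal{Z}_{K\setminus i}$, being a retract of a suspension, is a co-$H$ space, which by the first cited theorem is precisely the desuspension claim. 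Your route, if completed, would establish the theorem only for triangulations of $S^{2n+1}$, which is the special case the paper treats separately in Theorem~\ref{thm:neighbourlytriofsphere}.
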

\begin{proof}
    By \cite[Theorem 1.3]{IK2}, for any simplicial complex $K$, $\zk$ is a co-$H$ space if and only if the BBCG decomposition desuspends. Hence, it suffices to show that $\mathcal{Z}_{K \setminus i}$ is a co-$H$ space for all $i \in [m]$. Since $K$ is a pseudomanifold, Lemma ~\ref{lem:restsathypo} implies $K \setminus i$ satisfies the hypotheses of Theorem ~\ref{thm:maniwithboundretskel}, implying that $\mathcal{Z}_{K \setminus i}$ retracts off $\mathcal{Z}_{(K \setminus i)^{2n}}$. The simplicial complex $(K \setminus i)^{2n}$ is an $n$-neighbourly, $2n$-dimensional simplicial complex, so by \cite[Theorem 1.6]{IK2}, the BBCG decomposition for $\Sigma \mathcal{Z}_{(K \setminus i)^{2n}}$ desuspends. Thus $\mathcal{Z}_{(K\setminus i)^{2n}}$ is a suspension. As $\mathcal{Z}_{K \setminus i}$ retracts off $\mathcal{Z}_{(K \setminus i)^{2n}}$, $\mathcal{Z}_{K \setminus i}$ is therefore a co-$H$ space. 
\end{proof}

If $K$ is a triangulation of $S^n$, we can characterise when $K$ is Golod. If $K = \partial \Delta^{n+1}$, then Theorem ~\ref{thm:BBCG} implies that $\zk$ has one non-trivial homology group, and therefore has no nontrivial cup products or Massey products, implying that $K$ is Golod. If $K \neq \partial \Delta^{n+1}$, then Theorem ~\ref{thm:BBCG} implies that a minimal missing face corresponds to a $\mathbb{Z}$ summand in $H^i(\zk)$, where $i < m+n+1$. If $x\in H^{i}(\zk)$ generated this summand, then as $\zk$ is a manifold, Poincar\'e duality implies there is a class $y\in H^{m+n+1-i}(\zk)$ such that $x\cup y\neq 0$. Thus $H^{\ast}(\zk)$ has non-trivial cup products, implying that $K$ is not Golod. Therefore, we obtain the following. \begin{lemma}
\label{lem:GolodMaM}
    If $K$ is a triangulation of $S^n$ then $K$ is Golod if and only if $K = \partial \Delta^{n+1}$. $\qqed$ 
\end{lemma}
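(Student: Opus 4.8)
The plan is to prove both implications using the BBCG decomposition (Theorem~\ref{thm:BBCG}) together with Poincar\'e duality, which is available because a triangulation $K$ of $S^n$ on $[m]$ makes $\zk$ a closed manifold of dimension $m+n+1$ that is $2$-connected, hence simply connected and therefore orientable.

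For the forward direction I would suppose $K=\partial\Delta^{n+1}$. Then $m=n+2$ and the only missing face is the full vertex set $I=[m]$, with $K_I=K$ a triangulation of $S^n$. Theorem~\ref{thm:BBCG} then gives $\Sigma\zk\simeq\Sigma^{2+(n+2)}|S^n|\simeq S^{2n+4}$, so $\zk\simeq S^{2n+3}$. A space whose reduced homology is concentrated in a single degree admits no nontrivial cup products (any product of positive-degree classes lands in a degree where the cohomology vanishes) and no nontrivial higher Massey products, so $K$ is Golod.

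For the converse I would argue by contrapositive, producing a nontrivial cup product in $H^*(\zk)$ when $K\neq\partial\Delta^{n+1}$. Since the unique triangulation of $S^n$ on $n+2$ vertices is $\partial\Delta^{n+1}$, the hypothesis forces $m\geq n+3$, so the full vertex set $[m]$ is a non-face and therefore contains a minimal non-face $I$. Every proper subset of a minimal non-face is a face, and faces of the $n$-dimensional complex $K$ have at most $n+1$ vertices, so $|I|\leq n+2<m$; in particular $I\subsetneq[m]$. For such a minimal non-face $K_I=\partial\Delta^{|I|-1}\simeq S^{|I|-2}$, so Theorem~\ref{thm:BBCG} contributes the wedge summand $\Sigma^{2+|I|}|K_I|\simeq S^{2|I|}$, giving a $\mathbb{Z}$ summand in $\widetilde{H}_{2|I|-1}(\zk)$ and dually a free generator $x\in H^{2|I|-1}(\zk)$. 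Because $|I|\leq n+2$, its degree satisfies $2|I|-1\leq 2n+3<2n+4\leq m+n+1$, so $x$ lies strictly below the fundamental class. Invoking Poincar\'e duality for the closed orientable manifold $\zk$, the cup-product pairing $H^{i}(\zk)\otimes H^{m+n+1-i}(\zk)\to H^{m+n+1}(\zk)\cong\mathbb{Z}$ is nondegenerate modulo torsion, so the free class $x$ pairs nontrivially with some $y$, yielding $x\cup y\neq 0$. Hence $H^*(\zk)$ has a nontrivial cup product and $K$ is not Golod.

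I expect the only delicate point to be the bookkeeping in the converse: one must check that a minimal non-face of a sphere triangulation other than $\partial\Delta^{n+1}$ is genuinely a \emph{proper} subset of $[m]$, and that the associated cohomology class sits strictly below the top dimension, so that Poincar\'e duality produces an honest nonzero product rather than merely pairing the fundamental class with a degree-zero class. The degree estimate $2|I|-1\leq 2n+3<m+n+1$, valid precisely because $m\geq n+3$, is what secures this, and everything else reduces to identifying $K_I$ for a minimal non-face and reading off the BBCG summand.
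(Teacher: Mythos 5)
Your proof is correct and follows essentially the same route as the paper: the forward direction uses the BBCG decomposition to show $\mathcal{Z}_{\partial\Delta^{n+1}}$ has homology in a single degree, and the converse locates a $\mathbb{Z}$ summand from a minimal missing face strictly below the top dimension and pairs it nontrivially via Poincar\'e duality. Your version simply makes explicit the bookkeeping (the bound $|I|\leq n+2<m$ and the degree estimate $2|I|-1<m+n+1$) that the paper leaves implicit.
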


\noindent 
\textbf{Neighbourly trianglulations of $S^{2n+1}$}. To start, let $K$ be a triangulation of $S^{n}$ on $[m]$.
Let $\overline{\zk}$ be the $(m+n)$-skeleton of $\zk$. There is a homotopy cofibration \[S^{n+m} \stackrel{f}{\rightarrow} \overline{\zk} \rightarrow \zk\] 
where $f$ attaches the $(m+n+1)$-cell to $\zk$. We aim for a decomposition of $\overline{\zk}$ under certain hypotheses. These hypotheses will be satisfied when $K$ is a neighbourly triangulation of an odd dimensional sphere. First, we determine the homology of $\overline{\zk}$.

\begin{proposition}
    \label{prop:hmlyZkandskel}
    Let $K$ be a triangulation of $S^n$ on $[m]$. There are isomorphisms \[H_*(\zk) \cong \bigoplus\limits_{I \notin K} H_*(\Sigma^{1+|I|} |K_I|)\qquad H_*(\overline{\zk}) \cong \bigoplus\limits_{I \notin K,I \neq [m]} H_*(\Sigma^{1+|I|} |K_I|).\]
\end{proposition}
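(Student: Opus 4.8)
The plan is to read off the first isomorphism directly from the BBCG decomposition and then transport the homological information to $\overline{\zk}$ through the top-cell cofibration, the only delicate point being the behaviour of the attaching map in the top two degrees. To prove the left-hand isomorphism, I would apply Theorem~\ref{thm:BBCG} and pass to reduced homology. Writing each summand as $\Sigma^{2+|I|}|K_I| = \Sigma\big(\Sigma^{1+|I|}|K_I|\big)$ and using the suspension isomorphism on both $\Sigma\zk$ and the wedge, a single desuspension yields $\widetilde{H}_*(\zk) \cong \bigoplus_{I\notin K}\widetilde{H}_*(\Sigma^{1+|I|}|K_I|)$. Since $\emptyset\in K$ forces $|I|\ge 1$ for $I\notin K$, each summand is at least a double suspension, so it and $\zk$ are simply connected; reduced and unreduced homology then agree in positive degrees and the first isomorphism follows.

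Next I would carry out the dimension bookkeeping needed to isolate the top class. For $I=[m]$ we have $|K_{[m]}| = |K|\simeq S^n$, so $\Sigma^{1+m}|K|\simeq S^{m+n+1}$ contributes a single $\mathbb{Z}$ in degree $m+n+1$ and nothing else. For every other $I$ we have $|I|\le m-1$ and $\dim|K_I|\le\dim K = n$, so $\Sigma^{1+|I|}|K_I|$ has dimension at most $1+|I|+n\le m+n$ and its reduced homology is concentrated in degrees $\le m+n$. Hence $H_{m+n+1}(\zk)\cong\mathbb{Z}$, and for every $k\le m+n$ one has $H_k(\zk)\cong\bigoplus_{I\ne[m]}H_k(\Sigma^{1+|I|}|K_I|)$.

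Then I would feed the cofibration $S^{m+n}\xrightarrow{f}\overline{\zk}\to\zk$ into the long exact sequence in reduced homology. Because $\overline{\zk}$ is $(m+n)$-dimensional and $S^{m+n}$ has reduced homology only in degree $m+n$, the sequence gives $H_k(\overline{\zk})\cong H_k(\zk)$ for $k\le m+n-1$, gives $H_{m+n+1}(\overline{\zk})=0$, and leaves the exact sequence
\[0\longrightarrow H_{m+n+1}(\zk)\stackrel{\partial}{\longrightarrow}\widetilde{H}_{m+n}(S^{m+n})\stackrel{f_*}{\longrightarrow}H_{m+n}(\overline{\zk})\longrightarrow H_{m+n}(\zk)\longrightarrow 0.\]
The main obstacle is showing $f_*=0$, equivalently that $\partial$ is an isomorphism rather than multiplication by some $d\ge 2$; were $f_*$ nonzero the claimed isomorphism would genuinely fail in degree $m+n$. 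I would resolve this using the manifold structure: $\zk$ is a closed connected $(m+n+1)$-manifold which is orientable since $H_{m+n+1}(\zk)\cong\mathbb{Z}$, and the cofibration exhibits it as $\overline{\zk}$ with a single top cell attached, so this cell carries the fundamental class. The connecting map $\partial$ factors as the pinch $q_*\colon H_{m+n+1}(\zk)\to H_{m+n+1}(\zk/\overline{\zk})\cong H_{m+n+1}(S^{m+n+1})$ followed by the suspension isomorphism, and $q$ collapses the complement of the top cell and hence has degree $\pm 1$ on top homology. Therefore $\partial$ is an isomorphism and $f_*=0$.

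Finally, with $f_*=0$ the displayed sequence yields $H_{m+n}(\overline{\zk})\cong H_{m+n}(\zk)$. Combining this with the isomorphisms in degrees $\le m+n-1$, with $H_{m+n+1}(\overline{\zk})=0$, and with the identification of $H_k(\zk)$ for $k\le m+n$ from the bookkeeping step (both sides vanishing in degree $m+n+1$) gives, in every degree, $H_*(\overline{\zk})\cong\bigoplus_{I\notin K,\,I\ne[m]}H_*(\Sigma^{1+|I|}|K_I|)$, which is the second isomorphism. The crux of the whole argument is thus the degree computation for the pinch map, i.e.\ the orientability of $\zk$, everything else being formal manipulation of the BBCG splitting and the cofibration sequence.
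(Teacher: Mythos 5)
Your proof is correct, and its overall skeleton matches the paper's: desuspend the BBCG splitting of Theorem~\ref{thm:BBCG} to get the first isomorphism, observe that the $I=[m]$ summand is $S^{m+n+1}$ while every other summand has dimension at most $m+n$, and then compare $\zk$ with its $(m+n)$-skeleton $\overline{\zk}$. Where you genuinely differ is in justifying the one non-formal step, namely that $H_{m+n}(\overline{\zk}) \to H_{m+n}(\zk)$ is an isomorphism (equivalently $f_*=0$). The paper treats this as immediate from $\overline{\zk}$ being the $(m+n)$-skeleton; the implicit argument is cellular: $C_{m+n+1}(\zk)\cong\mathbb{Z}$ is generated by the single top cell, and $H_{m+n+1}(\zk)=\ker d_{m+n+1}\neq 0$ forces $d_{m+n+1}=0$, since a homomorphism from $\mathbb{Z}$ to a free abelian group with nonzero kernel must vanish; hence the skeleton inclusion induces isomorphisms in all degrees $\leq m+n$. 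You instead run the long exact sequence of the cofibration and kill $f_*$ by showing the pinch map $\zk \to \zk/\overline{\zk}\simeq S^{m+n+1}$ has degree $\pm 1$, using orientability of the closed manifold $\zk$ and the fact that the unique top cell carries the fundamental class. Both arguments are valid; yours is more geometric but invokes Poincar\'e duality and orientability, whereas the cellular argument needs only that the CW structure has a single top cell together with $H_{m+n+1}(\zk)\cong\mathbb{Z}$ (which you already extracted from BBCG), so it applies verbatim to any such complex, manifold or not. A small point in your favour: you correctly observe that the statement must be read in reduced homology (unreduced, it fails in degree $0$), which the paper glosses over.
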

\begin{proof}
The first isomorphism follows from Theorem ~\ref{thm:BBCG}. For the second, one summand has been deleted, corresponding to $I=[m]$. When $I=[m]$ then $K_{I}=K$. Since $K$ is a triangulation of a sphere, $|K| = S^n$, so $\Sigma^{1+\vert [m]\vert}\vert K\vert\simeq S^{m+n+1}$. This accounts for the generator in $H_{m+n+1}(\zk)$. As $\overline{\zk}$ is the $(m+n)$-skeleton of $\zk$, the second isomorphism follows.
\end{proof}

 In case the BBCG decomposition for $\Sigma \mathcal{Z}_{K \setminus i}$ desuspends for each $i \in [m]$ we can decompose $\overline{\zk}$.

\begin{proposition}
    \label{prop:decompskelzk}
    Let $K$ be a triangulation of $S^{n}$ on $[m]$. If the BBCG decomposition for $\Sigma \mathcal{Z}_{K \setminus i}$ desuspends for all $i \in [m]$, then $K$ is Golod when $K = \partial \Delta^{n-1}$ or minimally non-Golod when $K  \neq \partial \Delta^{n-1}$, and there is a homotopy equivalence \[\overline{\zk} \simeq \bigvee \limits_{I \notin K,I \neq [m]} \Sigma^{1+|I|} |K_I|.\]
\end{proposition}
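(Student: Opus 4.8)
The plan is to treat the two assertions separately, dealing first with the Golodness dichotomy and then with the wedge decomposition of $\overline{\zk}$.

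For the first assertion, observe that the hypothesis that the BBCG decomposition for $\Sigma\mathcal{Z}_{K\setminus i}$ desuspends means precisely that $\mathcal{Z}_{K\setminus i}\simeq\bigvee_{J\notin K\setminus i}\Sigma^{1+|J|}|(K\setminus i)_J|$, so $\mathcal{Z}_{K\setminus i}$ is a suspension. As recalled in the discussion of minimal non-Golodness, a suspension (indeed any co-$H$-space) has no nontrivial cup products or higher Massey products in cohomology, so $K\setminus i$ is Golod. Since this holds for every $i\in[m]$, the complex $K$ is minimally non-Golod by definition. Finally, by Lemma~\ref{lem:GolodMaM}, $K$ is itself Golod precisely for the boundary of a simplex, which separates the two cases in the statement.

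For the homotopy equivalence, the idea is to build a map out of the asserted wedge and show it is a homology isomorphism between simply-connected spaces. First I would note that each summand $\Sigma^{1+|I|}|K_I|$ with $I\notin K$ and $I\neq[m]$ can be realised inside $\overline{\zk}$: since $I\neq[m]$ there is a vertex $i\notin I$, so $I\subseteq[m]\setminus\{i\}$ and $K_I=(K\setminus i)_I$ is a full subcomplex of $K\setminus i$ with $I\notin K\setminus i$. The desuspension hypothesis then exhibits $\Sigma^{1+|I|}|K_I|$ as a wedge summand of $\mathcal{Z}_{K\setminus i}$, giving an inclusion $\Sigma^{1+|I|}|K_I|\to\mathcal{Z}_{K\setminus i}$. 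Because $\mathcal{Z}_{K\setminus i}$ has the homotopy type of a $CW$-complex of dimension at most $m+n$ (its top desuspended summand occurs at $J=[m]\setminus\{i\}$ and has dimension $1+(m-1)+n$), cellular approximation homotopes the composite $\mathcal{Z}_{K\setminus i}\to\zk$ through the $(m+n)$-skeleton $\overline{\zk}$. Choosing one such $i$ for each $I$ and wedging the resulting composites produces a map
\[\Phi\colon\bigvee_{I\notin K,\,I\neq[m]}\Sigma^{1+|I|}|K_I|\longrightarrow\overline{\zk}.\]

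It remains to show that $\Phi$ induces an isomorphism on homology. By Proposition~\ref{prop:hmlyZkandskel} the target has homology $\bigoplus_{I\notin K,\,I\neq[m]}\widetilde{H}_*(\Sigma^{1+|I|}|K_I|)$, which already matches the homology of the domain, so it suffices to check that $\Phi$ carries the $I$-th summand isomorphically onto the corresponding summand. This is where the naturality of the BBCG decomposition in Theorem~\ref{thm:BBCG} enters: the inclusion $K\setminus i\hookrightarrow K$ makes the square relating the BBCG splittings of $\Sigma\mathcal{Z}_{K\setminus i}$ and $\Sigma\zk$ commute, and under it the $I$-summand of $\Sigma\mathcal{Z}_{K\setminus i}$ maps identically to the $I$-summand of $\Sigma\zk$; the skeletal inclusion $\Sigma\overline{\zk}\to\Sigma\zk$ is the inclusion of exactly the summands with $I\neq[m]$. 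Thus $\Phi$ matches summands on homology with no cross terms and is a homology isomorphism. Both the domain (a wedge of at-least-$2$-connected suspensions, since every missing face has $|I|\geq2$) and $\overline{\zk}$ (a skeleton of the $2$-connected space $\zk$) are simply connected, so Whitehead's theorem upgrades $\Phi$ to a homotopy equivalence. The main obstacle I anticipate is the homology bookkeeping of this last step: one must verify that, after desuspending, the chosen composites realise each summand of $H_*(\overline{\zk})$ independently, with no interference between different choices of $i$ or between overlapping full subcomplexes. This is precisely what the naturality clause of Theorem~\ref{thm:BBCG} controls, and the factorisation through the skeleton via cellular approximation is the technical point that makes the target $\overline{\zk}$ rather than $\zk$.
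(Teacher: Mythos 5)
Your proposal is correct and follows essentially the same route as the paper: use naturality of the BBCG decomposition together with the desuspension hypothesis to include each summand $\Sigma^{1+|I|}|K_I|$ (via some $\mathcal{Z}_{K\setminus i}$ with $i\notin I$) into $\zk$, factor through the $(m+n)$-skeleton for dimension reasons, and conclude with a homology isomorphism plus Whitehead's theorem. The only cosmetic difference is that the paper wedges the maps $\mathcal{Z}_{K\setminus i}\to\zk$ over all $i\in[m]$ and then discards duplicate copies of summands, whereas you choose one $i$ for each $I$ from the outset; your explicit handling of the Golod/minimally non-Golod dichotomy and of simple connectivity fills in steps the paper leaves implicit.
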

\begin{proof} 
    The BBCG decomposition for $\Sigma\zk$ is 
    \[\Sigma\mathcal{Z}_{K} \simeq \bigvee\limits_{I \notin K} \Sigma^{2+|I|} |K_I|.\] 
    Consider the map 
    \(\namedright{\mathcal{Z}_{K\setminus i}}{}{\zk}\) 
    induced by the inclusion 
    \(\namedright{K\setminus i}{}{K}\).
    The naturality of the BBCG decomposition implies that the decomposition of $\Sigma\mathcal{Z}_{K\setminus i}$ may be obtained by restricting the decomposition for $\Sigma\zk$ to those full subcomplexes $K_{I}$ with $I\notin K$ and $i\notin I$. As the BBCG decomposition for $\Sigma \mathcal{Z}_{K\setminus i}$ desuspends by hypothesis, we obtain a homotopy equivalence 
    \[\mathcal{Z}_{K\setminus i} \simeq \bigvee\limits_{I \notin K,i \notin I} \Sigma^{1+|I|} |K_I|.\]
    Taking the wedge sum of the inclusion maps 
    \(\namedright{\mathcal{Z}_{K\setminus i}}{}{\zk}\) 
    over all $i\in[m]$ then gives a map 
    \[\namedright{\bigvee_{i=1}^{m}\bigg(\bigvee\limits_{I \notin K,i \notin I} \Sigma^{1+|I|} |K_I|\bigg)}{}{\zk}.\]
    Observe that the index set on the left accounts for all $I\notin K$ except for an instance of $I$ that contains each $i\in [m]$, of which there is only one, $I=[m]$. However, the index set may include multiple copies of the same wedge summand. Restricting to a single copy for each instance of $I\notin K$, $I\neq [m]$, we obtain a map 
    \[g\colon\namedright{\bigvee\limits_{I \notin K,I\neq [m]} \Sigma^{1+|I|} |K_I|}{}{\zk}\]
    whose suspension induces the inclusion of all wedge summands in the BBCG decomposition of $\zk$ except for the $I=[m]$ summand. In particular, $g$ induces an injection in homology. As each wedge summand $\Sigma^{1+\vert I\vert}\vert K_{I}\vert$ has dimension~$<m+n+1$ for $I\neq [m]$, the map $g$ factors through the $(m+n)$-skeleton $\overline{\mathcal{Z}_{K}}$ of $\zk$ to give a map  
    \[g'\colon\namedright{\bigvee\limits_{I \notin K,I\neq [m]} \Sigma^{1+|I|} |K_I|}{}{\overline{\mathcal{Z}_{K}}}.\]    
    Since $g$ induces an injection in homology, so does $g'$. The description of $H_{\ast}(\overline{\mathcal{Z}_{K}})$ in Proposition~\ref{prop:hmlyZkandskel} therefore implies that $g'$ must induce an isomorphism in homology, and hence $g'$ is a homotopy equivalence by Whitehead's Theorem. 
   \end{proof}

We will show that Proposition ~\ref{prop:decompskelzk} holds when $K$ is a neighbourly triangulation of $S^{2n+1}$. In this case, the decomposition of $\overline{\zk}$ can be refined. The following argument is essentially due to Gitler and Lopez de Medrano \cite{GLdM}, and the authors thank a referee for pointing out the following result holds for all neighbourly triangulations of $S^{2n+1}$, rather than just $S^3$.

\begin{theorem}
    \label{thm:neighbourlytriofsphere}
    If $K$ is a neighbourly triangulation of $S^{2n+1}$ on $[m]$ with $n \geq 1$ then the simplicial complex $K$ is Golod when $K = \partial \Delta^{2n+2}$, or minimally non-Golod when $K \neq \partial \Delta^{2n+2}$. Moreover, $\overline{\zk} \in \mathcal{W}$.
\end{theorem}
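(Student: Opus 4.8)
The plan is to combine the structural results already in hand with a homological recognition of wedges of spheres. Since a neighbourly triangulation $K$ of $S^{2n+1}$ is by definition an $n$-neighbourly pseudomanifold of dimension $2n+1$, Theorem~\ref{thm:neighbourlupseudominnonGolod} applies directly: the BBCG decomposition for $\Sigma\mathcal{Z}_{K\setminus i}$ desuspends for every $i\in[m]$, and $K$ is either Golod or minimally non-Golod. Lemma~\ref{lem:GolodMaM} pins down exactly when the Golod case occurs, namely $K=\partial\Delta^{2n+2}$, which gives the dichotomy asserted in the statement. The desuspension conclusion is precisely the hypothesis needed to invoke Proposition~\ref{prop:decompskelzk}, yielding a homotopy equivalence
\[\overline{\zk}\simeq\bigvee_{I\notin K,\,I\neq[m]}\Sigma^{1+|I|}|K_I|.\]
It therefore remains to show that each wedge summand $\Sigma^{1+|I|}|K_I|$ lies in $\mathcal{W}$.

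Each summand is a suspension of the full subcomplex $|K_I|$, which is connected (as $n\geq 1$ forces a complete $1$-skeleton), so the summand is simply connected; by a standard Hurewicz–Whitehead argument it is a wedge of spheres as soon as $\widetilde{H}_*(|K_I|)$ is free and concentrated in a single degree. I would establish this by a connectivity squeeze driven by $n$-neighbourliness. Since every subset of at most $n+1$ vertices is a face of $K$, for any $I$ with $|I|\geq n+1$ the full subcomplex $K_I$ contains the full $n$-skeleton on $I$ and is hence $(n-1)$-connected; as $I\notin K$ forces $|I|\geq n+2$, we always have $\widetilde{H}_i(|K_I|)=0$ for $i\leq n-1$.

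For the complementary vanishing I would apply combinatorial Alexander duality in $|K|=S^{2n+1}$. Writing $J=[m]\setminus I$, the complement $|K|\setminus|K_I|$ deformation retracts onto the complementary full subcomplex $|K_J|$, so $\widetilde{H}_i(|K_I|)\cong\widetilde{H}^{\,2n-i}(|K_J|)$. If $|J|\leq n$ then $J$ is a face, $K_J$ is contractible, and $|K_I|$ is acyclic, so the summand is contractible; if $|J|\geq n+1$ then $K_J$ likewise has complete $n$-skeleton and is $(n-1)$-connected, whence $\widetilde{H}^{\,2n-i}(|K_J|)=0$ for $2n-i\leq n-1$, i.e.\ for $i\geq n+1$. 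Combining the two ranges leaves $\widetilde{H}_*(|K_I|)$ supported only in degree $n$, and since $\widetilde{H}_{n-1}(|K_J|)=0$ the universal coefficient theorem gives $\widetilde{H}_n(|K_I|)\cong\widetilde{H}^n(|K_J|)$ free. Thus each $\Sigma^{1+|I|}|K_I|$ is a simply connected space with free homology in a single degree, hence a wedge of spheres, and taking the wedge over all $I$ gives $\overline{\zk}\in\mathcal{W}$. The main obstacle is exactly this homology computation for the full subcomplexes: the whole argument rests on simultaneously controlling the connectivity of $K_I$ and of its Alexander-dual complement $K_J$, which is where $n$-neighbourliness of a $(2n+1)$-sphere is essential.
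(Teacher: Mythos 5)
Your proposal is correct, and for the Golod/minimally non-Golod dichotomy and the wedge decomposition of $\overline{\zk}$ it follows the paper exactly: Theorem~\ref{thm:neighbourlupseudominnonGolod} plus Lemma~\ref{lem:GolodMaM} give the dichotomy, and the desuspension conclusion feeds into Proposition~\ref{prop:decompskelzk}. Where you genuinely diverge is the remaining key step, showing that $\widetilde{H}_*(|K_I|)$ is free and concentrated in degree $n$ for every $I \neq [m]$. The paper does this globally: it introduces the real moment-angle complex $\mathbb{R}\zk = (D^1,S^0)^K$, a closed $(2n+2)$-dimensional manifold \cite[Theorem 4.1.7]{BP}, combines the real BBCG splitting $\Sigma\mathbb{R}\zk \simeq \bigvee_{I\notin K}\Sigma^2|K_I|$ of \cite[Corollary 2.24]{BBCG} with the neighbourliness-forced vanishing $H_k(|K_I|)=0$ for $k<n$ to conclude that $\mathbb{R}\zk$ is $n$-connected, and then applies Poincar\'{e} duality once on $\mathbb{R}\zk$ to control all full subcomplexes simultaneously; this is the Gitler--L\'{o}pez de Medrano style argument \cite{GLdM} that the paper credits. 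You instead work one $I$ at a time, using classical Alexander duality inside $|K| = S^{2n+1}$ to pair $K_I$ with the complementary full subcomplex $K_{[m]\setminus I}$, and you exploit the neighbourliness-forced connectivity on \emph{both} sides of the pairing to squeeze the homology into degree $n$, with freeness coming from the vanishing $\mathrm{Ext}$ term. The two mechanisms are essentially equivalent in content --- Poincar\'{e} duality for $\mathbb{R}\zk$ together with its BBCG splitting encodes exactly Alexander duality between complementary full subcomplexes --- but your version is more elementary and self-contained: it needs only \cite[Theorem 3.44]{H} and the standard fact that the complement of a full subcomplex of a triangulated sphere deformation retracts onto the complementary full subcomplex, and it avoids the manifold structure and orientability of $\mathbb{R}\zk$ and the real BBCG splitting altogether. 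What the paper's packaging buys is brevity (a single duality application handles all $I$ at once) and a direct link to the intersections-of-quadrics literature it generalises. All your supporting claims are sound: $n$-neighbourliness places the $n$-skeleton of the full simplex inside every $K_I$, giving the connectivity bound; $|[m]\setminus I| \leq n$ makes $K_{[m]\setminus I}$ a simplex and the corresponding summand contractible; and a simply connected suspension with free homology in a single degree is a wedge of spheres by Hurewicz and Whitehead, so the finite wedge $\overline{\zk}$ lies in $\mathcal{W}$.
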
 
\begin{proof}
Consider the real moment-angle complex $\mathbb{R}\zk := (D^1,S^0)^K$ associated to $K$, which is a closed topological manifold of dimension $2n+2$ \cite[Theorem 4.1.7]{BP}. By \cite[Corollary 2.24]{BBCG}, there is a homotopy equivalence \begin{equation}\label{eqn:RMAChom}\Sigma \mathbb{R}\zk \simeq \bigvee\limits_{I \notin K} \Sigma^2 |K_I|.\end{equation} 

Since $K$ is a neighbourly triangulation of $S^{2n+1}$, each full subcomplex $K_I$ has $H_k(|K_I|) = 0$ for all $k < n$. It follows from \eqref{eqn:RMAChom} that $\mathbb{R}\zk$ is $n$-connected. By Poincar\'e duality, the reduced homology of $\mathbb{R}\zk$ is non-trivial only in degrees $n+1$ and $2n+2$. Therefore since $H_{2n+2}(\mathbb{R}\zk) \cong \mathbb{Z}$ and $K = S^{2n+1}$, \eqref{eqn:RMAChom} implies that for all $I \subseteq[m]$ with $I \neq [m]$, $\widetilde{H}_k(|K_I|)$ can be non-trivial if and only if $k=n$. Hence either $\vert K_{I}\vert$ is contractible or homotopy equivalent to $S^{n}$. In particular, each such $\Sigma |K_I| \in \mathcal{W}$.

Combining Theorem ~\ref{thm:neighbourlupseudominnonGolod}, Lemma ~\ref{lem:GolodMaM}, and Proposition ~\ref{prop:decompskelzk}, we then obtain the desired result.
\end{proof} 

Now we can prove Theorem~\ref{thm:introneighbourlytriinP}, which states 
that if $K$ is a neighbourly triangulation of $S^{2n+1}$ then $\Omega\zk\in P$. 

\begin{proof}[Proof of Theorem ~\ref{thm:introneighbourlytriinP}] 
Theorem ~\ref{thm:neighbourlytriofsphere} implies that $\overline{\zk} \in \mathcal{W}$. The Hilton-Milnor theorem then implies that $\Omega \overline{\zk}\in \mathcal{P}$. Using the fact that $\zk$ is a manifold, by \cite[Example 5.4]{T}, the inclusion $\overline{\zk} \rightarrow \zk$ has a right homotopy inverse after looping. Hence, Theorem ~\ref{thm:Pclosedunderret} implies that $\Omega \zk \in \mathcal{P}$.
\end{proof}

\noindent 
\textbf{Triangulations of $S^{3}$}.  
Now we specialise to any triangulation $K$ of $S^{3}$ and prove Theorem~\ref{thm:introtriofS3inP}, which states that $\Omega\zk\in\mathcal{P}$. This splits into two cases, the first where $K$ has a complete $1$-skeleton, and the second where it does not. The first case follows from Theorem~\ref{thm:introneighbourlytriinP} and the second 
requires a preliminary homological result from \cite[Lemma 3.4.12]{Si} on the homology of $\zk$. We provide a proof for completeness.

 \begin{lemma}
     \label{lem:S3trihomology}
     Let $K$ be a triangulation of $S^3$ on $[m]$. Then $H_*(\zk)$ is torsion free. $\qqed$
 \end{lemma}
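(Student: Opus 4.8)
The plan is to compute $H_*(\zk)$ via the BBCG decomposition (Theorem~\ref{thm:BBCG}) and show that every summand is torsion free. Since Theorem~\ref{thm:BBCG} gives
$\Sigma\zk\simeq\bigvee_{I\notin K}\Sigma^{2+|I|}|K_I|$,
the reduced homology of $\zk$ is, up to a dimension shift, a direct sum of copies of $\widetilde H_*(|K_I|)$ taken over the missing faces $I\notin K$. Therefore it suffices to show that $\widetilde H_*(|K_I|)$ is torsion free for every full subcomplex $K_I$ of $K$.

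First I would dispose of the case $I=[m]$: here $K_I=K$ is a triangulation of $S^3$, so $|K_I|\cong S^3$ and its homology is certainly torsion free. For the remaining case $I\subsetneq[m]$, the key geometric input is that $K$ triangulates $S^3$, so each proper full subcomplex $K_I$ is a subcomplex of a triangulation of $S^3$ and hence $|K_I|$ embeds in $S^3$, and thus in $\mathbb{R}^3$ (after removing a point not in $|K_I|$, which exists since $I\neq[m]$). The second key step is to invoke Alexander duality, or equivalently the standard fact (compare the surface case in Theorem~\ref{thm:triangsurface}, which cites \cite[Corollary 3.46]{H}) that a compact subcomplex embedding in $\mathbb{R}^3$ has torsion free homology: Alexander duality in $S^3$ relates $\widetilde H_k(|K_I|)$ to $\widetilde H^{2-k}(S^3\setminus|K_I|)$, and the latter groups, being cohomology of an open subset of $S^3$, carry no torsion in the relevant degrees. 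Concretely, since $|K_I|$ is at most $3$-dimensional and embeds in $\mathbb{R}^3$, its homology is concentrated in degrees $0,1,2$ with the top two related by duality to free groups, forcing all torsion to vanish.

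Having established that each $\widetilde H_*(|K_I|)$ is torsion free, I would conclude by the BBCG isomorphism: $H_*(\zk)\cong\bigoplus_{I\notin K}\widetilde H_{*-(1+|I|)}(|K_I|)$ is a direct sum of torsion free groups, hence torsion free.

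The main obstacle is making the embedding-implies-torsion-free step fully rigorous for an arbitrary full subcomplex, rather than just for the whole sphere. The subtlety is that $K_I$ need not be a manifold or even pure, so one cannot simply quote Poincar\'e duality; one must use Alexander duality for a compact (but possibly wild-looking combinatorially, though simplicially tame) polyhedron embedded in $S^3$. The cleanest route is probably to note that $|K_I|$ embeds in $S^3$ as a subcomplex of a triangulation, apply Alexander duality to get $\widetilde H_i(|K_I|)\cong\widetilde H^{2-i}(S^3\setminus|K_I|)$, and then observe that an open $3$-manifold has torsion free $H^2$ by the universal coefficient theorem together with the fact that open $3$-manifolds have free $H_1$; this handles the potentially troublesome middle degree and yields the result.
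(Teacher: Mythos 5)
Your outline is, in essence, the paper's own proof: the paper combines the homology version of the BBCG splitting (Proposition~\ref{prop:hmlyZkandskel}) with the observation that for $I \neq [m]$ the polyhedron $|K_I|$ embeds into $S^3 \setminus \{pt\} \cong \mathbb{R}^3$, and then cites \cite[Corollary 3.46]{H} to conclude that each $H_*(|K_I|)$ is torsion free --- exactly the reduction and the citation you propose.

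The one thing to correct is your attempt, in the final paragraph, to make the duality step rigorous: the degrees are crossed, and the statement you lean on is false. With Alexander duality in the form $\widetilde{H}_k(|K_I|) \cong \widetilde{H}^{2-k}(S^3 \setminus |K_I|)$, the group $\widetilde{H}^2(S^3 \setminus |K_I|)$ corresponds to $\widetilde{H}_0(|K_I|)$, which is free for trivial reasons; the genuinely troublesome degree is $k=1$, corresponding to $\widetilde{H}^1(S^3 \setminus |K_I|)$, and this is torsion free for \emph{any} space by the universal coefficient theorem, since the Ext term involves $H_0$ of the complement, which is free. So no facts about $3$-manifolds are needed, which is fortunate, because the fact you invoke --- that open $3$-manifolds have free $H_1$ --- is false in general: a punctured lens space $L(p,1) \setminus \{pt\}$ is an open $3$-manifold with $H_1 \cong \mathbb{Z}/p$ (it simply does not embed in $S^3$). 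The claim is true for complements of compact, locally contractible subsets of $S^3$, but only via the same duality-plus-UCT argument, so it cannot serve as an independent input. Since you also quote \cite[Corollary 3.46]{H} directly, which is all the paper itself uses, your proof stands once this aside is repaired.
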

 \begin{proof}
Since $K$ is a triangulation of $S^3$, $H_*(|K|)$ is torsion free. If $I \subseteq [m]$ with $I \neq [m]$, $|K_I|$ embeds into $S^3 \setminus \{pt\} \cong \mathbb{R}^3$, and so by \cite[Corollary 3.46]{H}, $H_*(|K_I|)$ is torsion free. Therefore, Proposition ~\ref{prop:hmlyZkandskel} implies that $H_*(\zk)$ is torsion free.
 \end{proof}

We can now prove Theorem~\ref{thm:introtriofS3inP}. 

\begin{proof}[Proof of Theorem~\ref{thm:introtriofS3inP}.]
    By Lemma ~\ref{lem:S3trihomology}, $H_*(\zk)$ is torsion free, so Theorem ~\ref{thm:BBCG} implies that $H_*(|K_I|)$ is torsion free for all $I \subseteq [m]$. If the $1$-skeleton of $K$ is not a complete graph, then Theorem ~\ref{thm:23dimpseudoinP} implies that $\Omega \zk \in \mathcal{P}$.

   If the $1$-skeleton is a complete graph, then Theorem ~\ref{thm:introneighbourlytriinP} implies $\Omega \zk \in \mathcal{P}$.
\end{proof}

\begin{remark}By a result of Cai \cite[Corollary 2.10]{C}, $\zk$ is a manifold if and only if $K$ is a generalised homology sphere. It would be interesting to know if these results also hold when $K$ is a generalised homology sphere, but not a triangulation of a sphere.\end{remark} 

\begin{remark}
    Not every triangulation $K$ of a sphere will result in $\Omega\zk\in P$. For example, let $L$ be the $6$-vertex triangulation of $\mathbb{R}P^2$. By \cite[Example 3.3]{GPTW}, there is a homotopy equivalence \begin{equation}\label{eqn:6vertRP2}\mathcal{Z}_L \simeq W \vee \Sigma^7 \mathbb{R}P^2,\end{equation} where $W \in \mathcal{W}$. As in \cite[Theorem 3.2]{LW}, one can construct a triangulation of $S^4$ containing $L$ as a full subcomplex by applying certain stellar subdivisions to $\partial\Delta^{5}$. Let $K$ be such a triangulation. By Lemma ~\ref{lem:DS} and \eqref{eqn:6vertRP2}, $\Sigma^7 \mathbb{R}P^2$ retracts off $\zk$, and so $\Omega \Sigma^7 \mathbb{R}P^2$ retracts off $\Omega \zk$. This implies that $H_*(\Omega \zk)$ contains $2$-torsion and so $\Omega \zk \notin \mathcal{P}$.
\end{remark}

\noindent 
\textbf{Quasitoric manifolds}. 
Theorem~\ref{thm:introtriofS3inP} will be applied in Proposition~\ref{quasitoric} to show similar results for certain manifolds known as quasitoric manifolds. As in~\cite{DJ}, a $2n$-dimensional manifold has a 
\emph{locally standard} $T^{n}$-action if locally it is the standard action of $T^{n}$ on $\mathbb{C}^{n}$. 
A \emph{quasitoric manifold} over an $n$-dimensional simple polytope $P$ is a closed, smooth 
$2n$-dimensional manifold $M$ that has a smooth locally standard 
$T^{n}$-action for which the orbit space $M/T^{n}$ is homeomorphic to~$P$ as a manifold with corners. 

Let $P$ be an $n$-dimensional simple polytope with $m$ facets, and let $K = \partial P^*$ be the Alexander dual of the boundary of $P$. The simplicial complex $K$ is a triangulation of $S^{n-1}$, and therefore $\mathcal{Z}_K$ is a moment-angle manifold. By \cite[Proposition 7.3.12]{BP}, a quasitoric manifold $M$ of dimension~$2n$ arises as a quotient $M \cong \zk/T^{m-n}$ for some subtorus $T^{m-n} \subseteq T^{m}$ that acts freely on the corresponding moment-angle complex $\zk$. The quotient description of $M$ implies that there is a principal $T^{m-n}$-fibration 
\begin{equation} 
  \label{qtfib} 
  \nameddright{T^{m-n}}{}{\mathcal{Z}_{K}}{}{M}. 
\end{equation} 
The following lemma is well known to experts in the area.

\begin{lemma} 
   \label{loopM} 
   Let $M$ be a quasi-toric manifold of dimension $2n$ associated to a polytope $P$ of dimension $n$. Let $K = \partial P^*$. Then there is a homotopy equivalence 
   $\Omega M\simeq T^{m-n}\times\Omega\mathcal{Z}_{K}$. 
\end{lemma}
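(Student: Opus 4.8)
The plan is to exploit the principal fibration~\eqref{qtfib} and show that it splits after looping. Applying the loop functor to the fibration
\(\nameddright{T^{m-n}}{}{\mathcal{Z}_{K}}{}{M}\)
yields a homotopy fibration
\[\nameddright{\Omega T^{m-n}}{}{\Omega\mathcal{Z}_{K}}{}{\Omega M}.\]
The strategy is to produce a right homotopy inverse for the map $\namedright{\Omega\mathcal{Z}_K}{}{\Omega M}$, since a principal fibration of this type is determined by a classifying map and looping an $H$-fibration where the base and fibre are $H$-spaces will give the stated product decomposition once we know the looped projection has a section.

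First I would observe that $T^{m-n}$ is a torus, so $\Omega T^{m-n}\simeq \mathbb{Z}^{m-n}$ is homotopy discrete and the fibration~\eqref{qtfib} is classified by a map $\namedright{M}{}{BT^{m-n}}$. The key structural point is that the inclusion of the fibre $\namedright{T^{m-n}}{}{\mathcal{Z}_K}$ is null homotopic: this holds because $\mathcal{Z}_K$ is $2$-connected (as recorded earlier in the section, using \cite[Theorem 4.1.4]{BP}), whereas $T^{m-n}$ is a product of circles, so any map from $T^{m-n}$ into $\mathcal{Z}_K$ is null homotopic on each circle factor and hence null homotopic. Consequently the connecting map $\namedright{\Omega M}{}{T^{m-n}}$ in the looped fibration is null homotopic, which means the fibration $\nameddright{\Omega T^{m-n}}{}{\Omega\mathcal{Z}_K}{}{\Omega M}$ has a section; equivalently, the map $\namedright{\Omega\mathcal{Z}_K}{}{\Omega M}$ has a right homotopy inverse.

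With the section in hand, the second step assembles the product decomposition. Since $\Omega M$, $\Omega\mathcal{Z}_K$ and $\Omega T^{m-n}$ are all homotopy associative $H$-spaces and the looped fibre inclusion $\namedright{\Omega T^{m-n}}{}{\Omega\mathcal{Z}_K}$ together with the section $\namedright{\Omega M}{}{\Omega\mathcal{Z}_K}$ are $H$-maps, their product
\[\namedright{\Omega T^{m-n}\times\Omega M}{}{\Omega\mathcal{Z}_K}\]
defined by multiplying the two maps using the loop multiplication is a homotopy equivalence: it induces an isomorphism on homology by the Künneth theorem and the splitting of the Serre spectral sequence, and both sides have the homotopy type of CW-complexes, so Whitehead's theorem applies. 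Identifying $\Omega T^{m-n}\simeq T^{m-n}$ (a torus is its own based loop space up to the discrete factor, or more precisely $\Omega T^{m-n}$ is homotopy equivalent to $\mathbb{Z}^{m-n}\times\Omega S^1\times\cdots$; here it is cleaner to note $T^{m-n}$ itself appears as the fibre and we transport the splitting back) gives the asserted equivalence $\Omega M\simeq T^{m-n}\times\Omega\mathcal{Z}_K$.

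The main obstacle is making the final identification precise, since one must be careful whether the torus factor appearing is $T^{m-n}$ or its loop space $\Omega T^{m-n}$. The cleanest route, which I would adopt, is not to loop the fibration directly but to use that a principal $T^{m-n}$-bundle with $2$-connected total space splits: the bundle $\namedright{\mathcal{Z}_K}{}{M}$ is classified by $\namedright{M}{}{BT^{m-n}}$, and because $\mathcal{Z}_K$ is $2$-connected this classifying map is a homotopy equivalence through dimension $2$ onto $BT^{m-n}$, so after looping $M$ the torus splits off as a direct factor. Concretely, the looped fibration $\nameddright{\Omega\mathcal{Z}_K}{}{\Omega M}{}{\Omega BT^{m-n}\simeq T^{m-n}}$ has a section induced by the inclusion of the torus subgroup, yielding $\Omega M\simeq T^{m-n}\times\Omega\mathcal{Z}_K$ as an $H$-space. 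This is standard for experts precisely because the $2$-connectivity of $\mathcal{Z}_K$ forces the bundle to be as untwisted as possible on homotopy.
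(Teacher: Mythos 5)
There is a genuine gap, and it sits at the pivotal step of your first argument. From the claim that the fibre inclusion $i\colon T^{m-n}\to\mathcal{Z}_K$ is null homotopic you conclude that the connecting map $\partial\colon\Omega M\to T^{m-n}$ (the map the paper calls $r$) is null homotopic. That implication is false, and here $\partial$ is provably essential: since $\mathcal{Z}_K$ is $2$-connected, the exact sequence of the fibration $T^{m-n}\to\mathcal{Z}_K\to M$ shows $\partial$ induces an isomorphism $\pi_1(\Omega M)\cong\pi_1(T^{m-n})\cong\mathbb{Z}^{m-n}$, so it cannot be null homotopic when $m>n$. What nullity of $i$ actually buys is that $\partial$ admits a \emph{right homotopy inverse} (the identity of $T^{m-n}$ lifts to the homotopy fibre of $i$, which is $\Omega M$ with fibre inclusion $\partial$) --- a different and weaker statement, and the one the proof really needs. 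Moreover, even granting your claim, your splitting goes the wrong way: a section of $\Omega\mathcal{Z}_K\to\Omega M$ would decompose $\Omega\mathcal{Z}_K$ as $\Omega T^{m-n}\times\Omega M\simeq\mathbb{Z}^{m-n}\times\Omega M$, which is absurd because $\Omega\mathcal{Z}_K$ is connected, and in any case it says nothing about the asserted decomposition of $\Omega M$. (Separately, your justification that $i$ is null homotopic --- ``null on each circle factor, hence null'' --- is not a valid inference; the quotient map $T^2\to S^2$ is null on each factor but essential. The claim is true here, but only as a \emph{consequence} of $\partial$ having a section, not a route to it.)

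Your ``cleanest route'' does land on the correct fibration $\Omega\mathcal{Z}_K\to\Omega M\xrightarrow{\Omega f}\Omega BT^{m-n}\simeq T^{m-n}$, and a section of $\Omega f$ would finish the proof by the standard $H$-space argument. But the section is asserted, not constructed: there is no ``inclusion of the torus subgroup'' into $\Omega M$ --- the subtorus $T^{m-n}\subseteq T^m$ maps to the fibre $\mathcal{Z}_K$, not to $\Omega M$ --- and knowing that $\Omega f$ is an isomorphism on $\pi_1$ does not by itself produce a section. Constructing one is exactly the content of the paper's proof: represent each of the $m-n$ generators of $\pi_1(\Omega M)\cong\mathbb{Z}^{m-n}$ by a map $S^1\to\Omega M$, multiply these maps together using the loop multiplication to obtain $s\colon T^{m-n}\to\Omega M$, observe that $\partial\circ s$ is a self-map of $T^{m-n}$ inducing an isomorphism on $\pi_1$ and hence a homotopy equivalence because $T^{m-n}$ is an Eilenberg--Mac\,Lane space $K(\mathbb{Z}^{m-n},1)$, and then split $\Omega M\simeq T^{m-n}\times\Omega\mathcal{Z}_K$ via the multiplication on $\Omega M$. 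Your write-up is missing precisely this construction, and the steps substituted for it are either false (the null-homotopy of $\partial$) or circular (the unexplained section).
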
 
\begin{proof} 
Consider the homotopy fibration 
\(\nameddright{\Omega M}{r}{T^{m-n}}{}{\mathcal{Z}_{K}}\) 
induced by~(\ref{qtfib}). By~\cite[Proposition 4.3.5 (a)]{BP}, $\mathcal{Z}_{K}$ is $2$-connected. Therefore $r$ induces an isomorphism on $\pi_{1}$. Each $\mathbb{Z}$ generator of $\pi_{1}(\Omega M)$ is the Hurewicz image of a map 
\(\namedright{S^{1}}{}{\Omega M}\), 
and the loop space structure allows these to be multiplied together to obtain a map 
\(s\colon\namedright{T^{m-n}}{}{\Omega M}\). 
The composite $r\circ s$ therefore induces an isomorphism on $\pi_{1}$. As $T^{m-n}$ is an Eilenberg-Mac Lane space, this implies $r\circ s$ is a homotopy equivalence. Thus $\Omega M\simeq T^{m-n}\times\Omega\zk$.  
\end{proof}

\begin{proposition} 
    \label{quasitoric} 
    If $M$ is a quasitoric manifold of dimension $4$, $6$ or $8$, then $\Omega M\in\mathcal{P}$.    
\end{proposition}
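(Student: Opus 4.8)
The plan is to reduce the statement about quasitoric manifolds to the corresponding result for their moment-angle manifolds, which has already been established for the relevant dimensions. By Lemma~\ref{loopM}, for any quasitoric manifold $M$ of dimension $2n$ associated to a simple polytope $P$ of dimension $n$, there is a homotopy equivalence $\Omega M\simeq T^{m-n}\times\Omega\mathcal{Z}_{K}$, where $K=\partial P^{*}$. Since the torus $T^{m-n}$ is itself a finite product of circles, it lies in $\mathcal{P}$ trivially. As $\mathcal{P}$ consists of $H$-spaces and is defined via finite type products of spheres and loops on spheres, a product of two spaces in $\mathcal{P}$ is again in $\mathcal{P}$. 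Thus it suffices to show that $\Omega\mathcal{Z}_{K}\in\mathcal{P}$ in each of the three cases.

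The key observation is that the dimension of $K=\partial P^{*}$ is controlled by the dimension of $M$. If $M$ has dimension $2n$, then $P$ has dimension $n$, and $K$ is a triangulation of the sphere $S^{n-1}$. So I would treat the three cases according to $n=2,3,4$:
\begin{letterlist}
\item If $M$ has dimension $4$, then $n=2$ and $K$ is a triangulation of $S^{1}$. Then $K$ is $1$-dimensional, so Theorem~\ref{thm:graphinP} (with each pair taken to be $(D^2,S^1)$) gives $\Omega\mathcal{Z}_{K}\in\mathcal{P}$.
\item If $M$ has dimension $6$, then $n=3$ and $K$ is a triangulation of $S^{2}$. This is precisely the case covered by Theorem~\ref{thm:introtriangsurface} (a triangulation of $S^2$ is a connected, orientable, closed surface), so $\Omega\mathcal{Z}_{K}\in\mathcal{P}$.
\item If $M$ has dimension $8$, then $n=4$ and $K$ is a triangulation of $S^{3}$. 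Theorem~\ref{thm:introtriofS3inP} then applies directly to give $\Omega\mathcal{Z}_{K}\in\mathcal{P}$.
\end{letterlist}

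In each case $\Omega\mathcal{Z}_{K}\in\mathcal{P}$, and combining with the homotopy equivalence from Lemma~\ref{loopM} together with closure of $\mathcal{P}$ under finite products yields $\Omega M\in\mathcal{P}$.

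I do not anticipate a serious obstacle here, since all the hard analytic and combinatorial work has been completed in the earlier sections; the proposition is essentially a dictionary translation via Lemma~\ref{loopM} followed by a dimension count selecting which of the three previously established sphere-triangulation theorems to invoke. The only point requiring mild care is the explicit bookkeeping that $\dim M=2n$ forces $K$ to triangulate $S^{n-1}$, so that the dimension restriction $\dim M\in\{4,6,8\}$ corresponds exactly to triangulations of $S^{1}$, $S^{2}$ and $S^{3}$, which are the spheres for which $\Omega\mathcal{Z}_{K}\in\mathcal{P}$ has been proved. One should also note that the product $T^{m-n}\times\Omega\mathcal{Z}_{K}$ being in $\mathcal{P}$ uses only that $\mathcal{P}$ is closed under finite products of its members, which is immediate from the definition of $\mathcal{P}$ as finite type products of spheres and loops on spheres.
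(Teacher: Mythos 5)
Your proposal is correct and follows essentially the same argument as the paper: apply Lemma~\ref{loopM} to reduce to $\Omega\mathcal{Z}_{K}\in\mathcal{P}$, then use the dimension count $\dim M=2n$ to identify $K$ as a triangulation of $S^{1}$, $S^{2}$ or $S^{3}$ and invoke Theorem~\ref{thm:graphinP}, Theorem~\ref{thm:introtriangsurface} or Theorem~\ref{thm:introtriofS3inP} respectively. The only difference is that you spell out explicitly that $T^{m-n}\in\mathcal{P}$ and that $\mathcal{P}$ is closed under finite products, which the paper leaves implicit.
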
 

\begin{proof} 
If $M$ is $2n$-dimensional with $m$ facets then, by Lemma~\ref{loopM}, there is a homotopy equivalence 
$\Omega M\simeq T^{m-n}\times \Omega\mathcal{Z}_{K}$, where $K$ is the Alexander dual of the boundary of an $n$-dimensional polytope. To show that $\Omega M \in \mathcal{P}$, it therefore suffices to show that $\Omega \zk \in \mathcal{P}$. But the hypotheses that $M$ has dimension $4$, $6$ or $8$ implies that $K$ is a triangulation of $S^1$, $S^2$ or $S^3$ respectively. Theorem ~\ref{thm:graphinP} in the first case, Theorem ~\ref{thm:introtriangsurface} in the second case, and Theorem ~\ref{thm:introtriofS3inP} in the third case imply that $\Omega \zk \in \mathcal{P}$, as required.
\end{proof}


\end{document}